\newcommand{\bitem}{\begin{itemize}}
\newcommand{\eitem}{\end{itemize}}
\newcommand{\beq}{\begin{equation}}
\newcommand{\eeq}{\end{equation}}
\newcommand{\cC}{{\mathcal C}}
\newcommand{\cS}{{\mathcal S}}
\newcommand{\cR}{{\mathcal R}}
\newcommand{\cH}{{\mathcal H}}
\newcommand{\cG}{{\mathcal G}}
\newcommand{\cM}{{\mathcal M}}
\newcommand{\bZ}{{\mathbb Z}}
\newcommand{\bR}{{\mathbb R}}
\newcommand{\bN}{{\mathbb N}}
\newcommand{\bS}{{\mathbb S}}
\newcommand{\Sp}{{\rm{supp }}}
\newcommand{\eSp}{{\rm{esssupp }}}
\newcommand{\sgn}{{\rm{sgn}}}
\newcommand\LpRn[2]{L^{#1}(\bR^{#2})}
\newcommand\Lp[1]{\LpRn{{#1}}{2}}
\newcommand\bm{{\tilde m}}
\newcommand{\ip}[2]{\langle#1,#2\rangle}
\newcommand{\norm}[1]{\|#1\|}
\newtheorem{theorem}{Theorem}[section]
\newtheorem{corollary}{Corollary}[section]
\newtheorem{proposition}{Proposition}[section]
\newtheorem{lemma}{Lemma}[section]
\newtheorem{definition}{Definition}[section]
\newtheorem{example}{Example}[section]
\newcommand{\xz}[1]{{\color{blue}{#1}}}
\newcommand{\bgb}[1]{{\color{OliveGreen}{#1}}}
\begin{document}
\title{Gabor Shearlets}

\author[rvt0]{Bernhard G. Bodmann\fnref{fn0}\corref{cor}}
\ead{bgb@math.uh.edu}
\address[rvt0]{Department of Mathematics, University of Houston, USA}
\fntext[fn0]{Research was supported in part by NSF grant DMS-1109545. Part of this work was completed
while visiting the Technische Universit{\"a}t Berlin and the Erwin Schr{\"o}dinger Institut Wien.}
\cortext[cor]{Corresponding author.}

\author[rvt1]{Gitta Kutyniok\fnref{fn1}}
\ead{kutyniok@math.tu-berlin.de}
\address[rvt1]{Institute of Mathematics, Technische Universit\"at Berlin, Germany}
\fntext[fn1]{Research was supported in part by the Einstein Foundation Berlin, by Deutsche
Forschungsgemeinschaft (DFG) Grant SPP-1324 KU 1446/13 and DFG Grant KU 1446/14, by
the DFG Collaborative Research Center TRR 109 ``Discretization in Geometry and Dynamics'',
and by the DFG Research Center {\sc Matheon} ``Mathematics for key technologies'' in Berlin.
}

\author[rvt2]{Xiaosheng Zhuang}
\ead{xzhuang7@cityu.edu.hk}
\address[rvt2]{Department of Mathematics, City University of Hong Kong, China}

\makeatletter \@addtoreset{equation}{section} \makeatother

\begin{abstract}
In this paper, we introduce Gabor shearlets, a variant of shearlet systems, which are
based on a  different group representation than previous shearlet constructions:
they combine elements from Gabor and wavelet frames in their construction. As a consequence,
they can be implemented with standard filters from wavelet theory in combination with
standard Gabor windows. Unlike the usual shearlets, the new construction can achieve a
redundancy as close to one as desired. Our construction follows the general strategy for
shearlets. First we define group-based Gabor shearlets and then modify them to a
cone-adapted version. In combination with Meyer filters, the cone-adapted Gabor shearlets
constitute a tight frame and provide low-redundancy sparse approximations of the common
model class of anisotropic features which are cartoon-like functions.
\end{abstract}

\begin{keyword}
Gabor shearlets \sep Cartoon-like functions \sep Cone-adapted shearlets \sep Gabor frames \sep orthonormal wavelets \sep
redundancy \sep sparse approximation \sep  shearlets \sep tight frames

\MSC[2000]{42C40, 41A05, 42C15, 65T60}
\end{keyword}

\maketitle

\section{Introduction}

During the last 10 years, directional representation systems such as curvelets and shearlets
were introduced to accommodate the need for sparse approximations of anisotropic features
in multivariate data. These anisotropic features, such as singularities on lower dimensional
embedded manifolds, called for representation systems to sparsely approximate such
data. Prominent examples in the 2-dimensional setting are edge-like structures in images
in the regime of explicitly given data and shock fronts in transport equations in the regime of implicitly
given data. Because of their isotropic nature, wavelets are not as well adapted to this task
as curvelets \cite{CD04}, contourlets \cite{DV05},
or shearlets \cite{KL12}. Recently, a general framework for directional representation systems based on
parabolic scaling -- a scaling adapted to the fact that the regularity of the singularity in the considered
model is $C^2$ -- was introduced in \cite{GK12} seeking to provide a comprehensive viewpoint towards
sparse approximations of cartoon-like functions.

Each system comes with its own advantages and disadvantages.
%
%
%
Shearlet systems distinguished themselves by the fact that these systems are available as compactly supported
systems -- which is desirable for applications requiring high spatial localization such as PDE solvers --
and also provide a unified treatment of the continuum and digital setting thereby ensuring faithful
implementations.
Shearlets were introduced in \cite{GKL06} with the early theory focussing on band-limited
shearlets, see e.g. \cite{GL07a}. Later, a compactly supported variant was introduced in \cite{KKL10}, which
again provides optimally sparse approximations of cartoon-like functions \cite{KL10}. In contrast to those
properties,  contourlets do not provide
optimally sparse approximations and curvelets are neither compactly supported nor do they treat the
continuum and digital realm uniformly due to the fact that they are based on rotation in contrast to
shearing.


\subsection{Key Problem}
\label{subsec:key}

One major problem -- which might even be considered a ``holy grail'' of the area of geometric multiscale
analysis -- is whether a system can be designed which is
\bitem
\item[(P1)] an orthonormal basis,\\[-4ex]
\item[(P2)] compactly supported,\\[-4ex]
\item[(P3)] possesses a multiresolution structure,\\[-4ex]
\item[(P4)] and provides optimally sparse approximations of cartoon-like functions.
\eitem
Focussing from now on entirely on shearlets, we can observe that bandlimited shearlets satisfy (P4) while
replacing (P1) with being a tight frame. Compactly supported shearlets accommodate (P2) and (P4), and
form a frame with controllable frame bounds as a substitute for (P1). We are still far from being able
to construct a system satisfying all those properties -- also by going beyond shearlets -- , and it is
not even clear whether this is at all possible, cf. also \cite{Hou13}.
Several further attempts were already made in the past. In \cite{KS09}, shearlet systems were introduced
based on a subdivision scheme, which naturally leads to (P2) and (P3), but not (P1) -- not even being
tight -- and (P4). In \cite{HKZ10}, a different multiresolution approach was utilized leading to systems
which satisfy (P2) and (P3), but not (P4), and (P1) only by forming a tight frame without results on
their redundancy.


\subsection{What are Gabor Shearlets?}

The main idea of the present construction is to use a deformation of the group operation that
common shearlet systems are based upon together with a decomposition in the frequency domain to ensure
an almost uniform treatment of different directions, while modeling the systems as closely as possible
after the one-dimensional multiresolution analysis (MRA) wavelets. To be more precise, the
new group operation includes shears and chirp modulations which satisfy the well-studied Weyl-Heisenberg
commutation relations. Thus, the shear part naturally leads us to Gabor frame constructions instead of
an alternative viewpoint in which shears enter in composite dilations \cite{GLLWW06}. The filters
appearing in this construction can be chosen as the trigonometric polynomials belonging to standard
wavelets or to $M$-band versions of them, or as the smooth filters associated with Meyer's construction.
To achieve the optimal approximation rate for cartoon-like functions, we use a cone adaptation procedure.
But in contrast to other constructions, we avoid incorporating redundancy in this step.

It is interesting to notice that due to the different group structure, Gabor shearlets do not fall into the framework of
parabolic molecules (cf. \cite{GK12}) although they are based on parabolic scaling. Thus, this framework
can not be used in our situation for deriving results on sparse approximations by transfering such properties
from other systems.


\subsection{Our Contributions}

Gabor shearlets satisfy the following properties, related to Subsection \ref{subsec:key}:
\bitem
\item[(P1$^*$)] Gabor shearlets can be chosen to be unit norm and $b^{-1}$-tight, where $b^{-1}$ --  which can
be interpreted as the redundancy (cf. Subsection \ref{subsec:redundancy}) -- can be chosen arbitrarily close to one.\\[-4ex]
\item[(P2$^*$)] Gabor shearlets are not compactly supported, but can be constructed with polynomial
decay in the spatial domain.\\[-4ex]
\item[(P3)]The two-scale relation for the shearlet subband decomposition is implemented with standard filters
related to MRA wavelets.\\[-4ex]
\item[(P4)] In conjunction with a cone-adaptation strategy and Meyer filters, Gabor shearlets provide optimally sparse approximations of
cartoon-like functions.
\eitem
Thus, (P3) and (P4) are satisfied. (P1) is approximately satisfied in the sense that the systems with property (P1$^*$) are arbitrarily
close to being orthonormal bases. And (P2) is also approximately satisfied by replacing compact support by
polynomial decay in (P2$^*$). It is in this sense that we believe the development of Gabor shearlets contributes to
introducing a system satisfying (P1)--(P4). Or -- if it could be proven that those are not simultaneously
satisfiable -- providing a close approximation to those.


\subsection{Outline of the Paper}

The remainder of this paper is organized as follows. In Section \ref{sec:Preliminaries}, we set the notation
and recall the essential properties of Gabor systems, wavelets, and shearlets which are needed in the sequel.
In this section, we also briefly introduce the notion of redundancy first advocated in \cite{BCK11}.
In Section \ref{sec:GroupGaborShearlets}, after providing some intuition on our approach we introduce
Gabor shearlets based on a group related to chirp modulations and discuss their frame properties and
the associated multiresolution structure. The projection of those Gabor shearlets on cones in the frequency
domain is then the focus of Section \ref{sec:ConeGaborShearlets}, again starting with the construction
followed by a discussion of similar properties as before. The last section, Section \ref{sec:approx},
contains the analysis of sparse approximation properties of cone-adapted Gabor shearlets.


\section{Revisited: Wavelets, Shearlets, and Gabor Systems}
\label{sec:Preliminaries}

In this section, we introduce the main notation of this paper, state the basic definitions of
Gabor systems, wavelets, and shearlets, and also recall the underlying construction principles,
formulated in such a way that Gabor shearlets will become a relatively straightforward generalization.
We emphasize
that this is not an introduction to Gabor and wavelet theory, and we expect the reader to have some
background knowledge, otherwise we refer  to \cite{Daub:book} or \cite{Mal98}. A good general reference
for most of the material presented in this section is the book by Weiss and Hern\'{a}ndez \cite{WH96}.  In the last
part of this section, we discuss the viewpoint of redundancy from \cite{BCK11}, which we adopt in this paper.

In what follows, the Fourier transform of $f\in \LpRn{1}{n}$ is defined to be $\widehat{f}(\xi):=\int_{\mathbb{R}^n} f(x)e^{-2\pi i
x\cdot\xi}dx$, where $x\cdot \xi$ is the dot product between $x$ and $\xi$ in $\mathbb R^n$. As usual, we extend this integral
transform to the unitary map $f \mapsto \widehat f$ defined for any function $f$ which is square integrable. The unitarity is
captured in the Plancherel identity $\ip{f}{g}=\ip{\widehat f}{\widehat g}$ for any two functions $f, g \in L^2(\mathbb R^n)$ with $\ip{f}{g}:=\int_{\mathbb R^n} f(x)\overline{g(x)}dx$.


\subsection{MRA Wavelets}
\label{subsec:MRAwavelets}

Let $\{\phi,\psi\}$ be a pair of a scaling function and a wavelet for $L^2(\mathbb R)$  associated with a pair of a
low-pass filter $H: \mathbb T \to \mathbb C$ and a high-pass filter $G: \mathbb T \to \mathbb C$, for convenience
defined on the torus $\mathbb T = \{z \in \mathbb C: |z| = 1\}$. We start by recalling the Smith-Barnwell condition
for filters.

\begin{definition}
A filter $H: \mathbb T \to \mathbb C$ satisfies the {\em Smith-Barnwell condition}, if
\[
   |H(z)|^2+|H(-z)|^2 = 1
\]
for almost every $z \in \mathbb T$.
\end{definition}

The Smith-Barnwell condition is an essential ingredient in the characterization of localized multiresolution analyses; that is, the scaling functions $\phi$  are localized in the sense of having faster than polynomial decay: $\int_{\mathbb{R}} (1+x^2)^n|\phi(x)|^2dx<\infty$ for all $n\in\mathbb{N}$.

\begin{theorem}[Cohen, as in \cite{WH96} Theorem 4.23 of Chapter 7]
A $C^\infty$ function $H: \mathbb T \to \mathbb C$ is the low-pass filter of a localized multiresolution analysis
with scaling function $\phi$ given by
$$
  \widehat \phi(\xi) = \prod_{j=1}^\infty  H( e^{-2\pi i \xi/2^j})
$$
if and only if $H(1)=1$, $H$ satisfies the Smith-Barnwell condition, and there exists a set $K\subset \mathbb T$ which
contains $1$ and has a finite complement in $\mathbb T$ such that $H(z^{2^{-j}})\ne 0$ for all $j \in \mathbb Z$, $j \ge 0$
, and $z \in K$.
\end{theorem}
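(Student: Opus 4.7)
The plan is to prove the two directions separately, with almost all of the work in sufficiency.

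\emph{Necessity.} Assume $\phi$ comes from a localized MRA with low-pass filter $H$ and $\widehat\phi$ is given by the infinite product. The two-scale relation $\widehat\phi(2\xi)=H(e^{-2\pi i \xi})\widehat\phi(\xi)$ together with the normalization $\widehat\phi(0)=1$ (which holds for any localized MRA with $\phi\in L^1$ normalized so $\int\phi=1$) forces $H(1)=1$ upon evaluation at $\xi=0$. Orthonormality of the integer translates of $\phi$ is equivalent to $\sum_{k\in\mathbb Z}|\widehat\phi(\xi+k)|^2=1$ a.e.; I would split this sum into even and odd $k$, apply the two-scale relation to factor out $|H(e^{-2\pi i\xi})|^2$ and $|H(-e^{-2\pi i\xi})|^2$ on the two pieces, and then re-invoke the orthonormality identity at scale $2\xi$ to extract the Smith--Barnwell identity. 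Any failure of $H(z^{2^{-j}})\neq 0$ on a set of $z\in\mathbb T$ with infinite complement would force $\widehat\phi$ to vanish on a large set, incompatible with its continuity and value $1$ at the origin; this yields the non-vanishing condition.

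\emph{Sufficiency: construction and decay.} Define $\widehat\phi(\xi):=\prod_{j=1}^\infty H(e^{-2\pi i \xi/2^j})$ and write $P_J(\xi):=\prod_{j=1}^J H(e^{-2\pi i \xi/2^j})$. Smith--Barnwell gives $|H|\le 1$, so $|P_J|\le 1$. Since $H\in C^\infty$ and $H(1)=1$, a first-order Taylor estimate produces $|H(e^{-2\pi i \xi/2^j})-1|=O(|\xi|/2^j)$ uniformly for $\xi$ in compact sets, so the infinite product converges locally uniformly to a continuous function with $\widehat\phi(0)=1$. Iterating the smoothness estimate by comparing $H$ with its Taylor polynomials at $1$ and $-1$ upgrades this to the polynomial bound $|\widehat\phi(\xi)|\le C_N(1+|\xi|)^{-N}$ for every $N$, which, combined with smoothness in $\xi$, supplies the claimed spatial localization of $\phi$ via Fourier inversion.

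\emph{Sufficiency: orthonormality (the main obstacle).} By induction on $J$, repeated application of Smith--Barnwell gives
\[
\sum_{k=0}^{2^J-1}|P_J(\xi+k)|^2=1 \quad\text{for a.e. } \xi\in[0,1).
\]
Rewritten on $\mathbb R$, this says $\sum_{k\in\mathbb Z}|P_J(\xi+k)|^2\,\chi_{[-2^{J-1},2^{J-1}]}(\xi+k)=1$ a.e. I would pass to the limit $J\to\infty$ to conclude $\sum_{k\in\mathbb Z}|\widehat\phi(\xi+k)|^2=1$. This is the genuinely delicate step. The integrand converges pointwise to $|\widehat\phi(\xi+k)|^2$ only on the set of $\xi$ for which all the tail factors $H(e^{-2\pi i (\xi+k)/2^j})$, $j>J$, stay non-zero. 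Cohen's hypothesis on the cofinite set $K\subset\mathbb T$ is precisely what guarantees that the bad set has measure zero, so a Fatou plus dominated-convergence argument (using $|P_J|\le 1$) produces the desired a.e.\ identity.

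\emph{MRA structure.} Setting $V_j:=\overline{\mathrm{span}}\{\phi(2^j\cdot-k):k\in\mathbb Z\}$, the two-scale relation gives $V_j\subset V_{j+1}$; density of $\bigcup_j V_j$ in $L^2(\mathbb R)$ follows from $\widehat\phi(0)=1$ together with the rapid decay of $\widehat\phi$ near infinity, and triviality of $\bigcap_j V_j$ follows from the decay of $\phi$. The hardest part remains the orthonormality step: without Cohen's condition, $|P_J|^2$ can leak mass through zeros of the tail factors and produce a limit strictly smaller than $1$, so the non-vanishing hypothesis on a set $K$ with finite complement in $\mathbb T$ is the minimal requirement that rules out this pathology, while the $C^\infty$ smoothness of $H$ is what transfers through the product to the polynomial frequency decay and hence the spatial localization of $\phi$.
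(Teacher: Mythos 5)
The paper does not prove this theorem; it is quoted verbatim as a known result of Cohen from Hern\'andez--Weiss, so there is no internal proof to compare against. Your outline follows the standard route (two-scale relation at $\xi=0$ for $H(1)=1$, even/odd splitting of the periodization for Smith--Barnwell, telescoping of Smith--Barnwell for $\sum_{k=0}^{2^J-1}|P_J(\xi+k)|^2=1$, and Cohen's condition to pass to the limit), and you correctly flag the limit exchange as the crux. Nevertheless there are two genuine problems.

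First, the localization step is wrong. You claim that $C^\infty$ smoothness of $H$ yields $|\widehat\phi(\xi)|\le C_N(1+|\xi|)^{-N}$ for every $N$; this is false. Any Daubechies filter is a trigonometric polynomial, hence $C^\infty$ on $\mathbb T$, and satisfies $H(1)=1$, Smith--Barnwell and Cohen's condition, yet the associated $\widehat\phi$ decays only at a fixed finite polynomial rate (the scaling function is compactly supported and of limited H\"older regularity). More importantly, the localization in the theorem, $\int(1+x^2)^n|\phi(x)|^2\,dx<\infty$ for all $n$, is equivalent to $\widehat\phi\in H^n(\mathbb R)$ for all $n$, i.e.\ to $L^2$-control of the \emph{derivatives} of $\widehat\phi$; decay of $\widehat\phi$ at infinity is the dual of smoothness of $\phi$ and is irrelevant here. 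You have the Fourier duality reversed, and the needed ingredient --- that differentiating the infinite product term by term produces factors $2^{-j}$ and that the resulting derivatives lie in $L^2$ (using $\sum_k|\widehat\phi(\xi+k)|^2\le 1$) --- is absent.

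Second, the orthonormality limit is not actually closed. The partial products $P_J$ converge to $\widehat\phi$ pointwise (indeed locally uniformly) everywhere, zeros or not; the danger is not failure of pointwise convergence but escape of mass to infinity, so that Fatou gives only $\sum_k|\widehat\phi(\xi+k)|^2\le 1$ with possible strict inequality (as for the stretched Haar filter giving $\phi=\tfrac13\chi_{[0,3]}$). The bound $|P_J|\le 1$ is not an integrable dominating function on $\mathbb R$, so ``Fatou plus dominated convergence'' does not run as stated. The standard repair is to replace $[-2^{J-1},2^{J-1}]$ by the dilate $2^JK$ of the Cohen set, use continuity and compactness to get $\inf_K|\widehat\phi|=c>0$, and then dominate $|P_J(\xi)|^2\chi_{2^JK}(\xi)=|\widehat\phi(\xi)|^2/|\widehat\phi(\xi/2^J)|^2\,\chi_{2^JK}(\xi)\le c^{-2}|\widehat\phi(\xi)|^2$, which is integrable; this is the step your sketch gestures at but does not supply.
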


The two-scale relations for $\phi$ and $\psi$ are conveniently expressed in the frequency domain,
\[
\widehat\phi(2\xi) = H(e^{-2\pi i \xi}) \widehat\phi(\xi)\quad\mbox{and}\quad
\widehat\psi(2\xi) =G(e^{-2\pi i \xi}) \widehat\phi(\xi) , \quad \mbox{a.e. } \xi \in \mathbb R.
\]
The orthonormality of the integer translates of $\{\phi, \psi\}$ is captured in the matrix identity
\[
\begin{aligned}
\cM(z)\cM(z)^*=I_2\quad\mbox{with}\quad
\cM(z):=
\left[
\begin{matrix}
H(z) & H(-z)\\
G(z)  & G(-z) \\
\end{matrix}
\right]
\end{aligned} \, ,\quad   \mbox{ for a.e.\ } z \in \mathbb T \, .
\]
Often, only $H$ is specified and the matrix has to be completed to a unitary, with a common choice
being $G(z)=-z\overline{H(-z)}$.

The low-pass filter of the Meyer
scaling function is of particular use for the construction of Gabor shearlets, which will be shown in Section \ref{sec:approx} to yield optimal sparse approximations.
The Meyer scaling function $\phi$ and wavelet function $\psi$ are given by
\[
\widehat\phi(\xi) = \begin{cases}
1 &  \mbox{if }|\xi|\le \frac13,\\
\cos(\frac{\pi}{2}\nu(3|\xi|-1)) & \mbox{if } \frac13\le |\xi|\le \frac23,\\
0 & \mbox{otherwise},
\end{cases}
\]
and
\[
\widehat\psi(\xi)=
\begin{cases}
-e^{-\pi i \xi}\sin\left[\frac{\pi}{2}\nu(3|\xi|-1)\right]& \mbox{if } \frac{1}{3}\le|\xi|\le \frac{2}{3},\\
-e^{-\pi i \xi}\cos\left[\frac{\pi}{2}\nu(\frac{3}{2}|\xi|-1)\right]& \mbox{if } \frac{2}{3}\le|\xi|\le \frac{4}{3},\\
0 & \mbox{otherwise}.
\end{cases}
\]
Here, $\nu$ is a function satisfying $\nu(x)=0$ for $x\le0$, $\nu(x)=1$ for $x\ge 1$, and in addition, $\nu(x)+\nu(1-x)=1$ for $0\le x\le 1$.
For example, $\nu$ can be defined to be $\nu(x)=x^4(35-84x+70x^2-20x^3)$ for $x\in[0,1]$, which leads to $C^3$ functions of $\widehat\phi$ and $\widehat\psi$.

Then, the corresponding $H$ 
is given by
\[
H(e^{-2\pi i\xi}) = \begin{cases}
1 & |\xi|\le \frac16,\\
\cos(\frac{\pi}{2}\nu(6|\xi|-1)) & \frac16\le |\xi|\le \frac13, \\
0 & \frac13\le |\xi|\le \frac12.
\end{cases}
\]
We remark that $\xi \mapsto H(e^{-2\pi i\xi})$ is a $1$-periodic function and the Meyer wavelet function $\psi$ defined above satisfies
$\widehat\psi(2\xi) = -e^{-2\pi i \xi} \overline{H(e^{-2\pi i (\xi+\frac12)})}\widehat\phi(\xi)$. Hence the high-pass filter $G$ for $\psi$ is given
by $G(z)=-z\overline{H(-z)}$ with $z=e^{-2\pi i\xi}$. For any $k\in\mathbb{N}$, there exists $\nu$ such that $\widehat\phi$ and $\widehat\psi$ are
functions in $C^k(\mathbb R)$. Moreover, $\nu$ can be constructed to be $C^\infty$ so that
both $\widehat\phi$ and $\widehat\psi$ are functions in $C^\infty(\mathbb R)$ and their corresponding filters are functions in $C^\infty(\mathbb T)$.
 For more details about
Meyer wavelets, we refer to \cite{Daub:book} or \cite{Mal98}.


\subsubsection{Subband Decomposition for Discrete Data}

The two-scale relation in combination with downsampling as a simple data reduction strategy
is crucial for the efficient decomposition of data from some approximation space, say $V_0$.
We next formalize the decomposition of a function $f \in V_0 = V_{-1} \oplus W_{-1}$
in terms of the $Z$-transform.

For this, let the group of integer translations $\{T_n\}_{n \in \mathbb Z}$ acting on $L^2(\mathbb R)$
be defined by $T_n f(x) = f(x-n)$ for almost every $x \in \mathbb R$. Then, each function $f\in V_0$
can be expressed as
$$
   f = \sum_{n \in \mathbb Z} c_n T_n \phi.
$$
This enables us to associate with $f$ the values of the almost everywhere converging series
\[
Zf(z)=\sum_{n=-\infty}^\infty c_n z^n, \quad z \in \mathbb T.
\]

Letting now $H: \mathbb T \to \mathbb C $ be the low-pass filter of a localized multiresolution analysis
as specified above, the characterization of the subspace $V_{-1} \subset V_0$ can then be expressed as
\[
f \in V_{-1} \quad \Longleftrightarrow \quad Zf(z) = H(z)\left(\overline{ H(z)} Zf(z) +\overline{ H(-z)} Zf(-z)\right) \; \mbox{ for a.e. } z \in \mathbb T.
\]
This fact enables us to state a unified characterization of $V_{-1}$ and of $W_{-1} = V_0 \ominus V_{-1}$.
%
%
\begin{proposition}
\label{prop:projections}
Let $P_{V_{-1}}$ and $P_{W_{-1}}$ denote the orthogonal projection of $V_0$ onto $V_{-1}$ and $W_{-1}$,
respectively. Further, letting $H$ be defined as above, define $H_+$ to be the multiplication operator
given by $H_+F(z)=H(z)F(z)$, $H_-$ given by $H_- F(z) = H(-z) F(z)$, and $R_2$ the reflection operator
satisfying $R_2Zf(z)=Zf(-z)$. Then, we have
\[
Z P_{V_{-1}} f  =  H_+ (I+R_2) \overline {H_+} Zf
\quad \mbox{and} \quad
Z P_{W_{-1}} f = \overline {H_-} (I-R_2) {H_-} Zf.
\]
\end{proposition}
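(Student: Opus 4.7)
The plan is to unfold both projections in the orthonormal basis $\{\phi(\cdot-m)\}_{m\in\mathbb Z}$ of $V_0$, push the resulting coefficient maps through the $Z$-transform, and recognize the answer as the stated product of multiplication and reflection operators. The key algebraic fact behind the formula is the downsampling/upsampling identity
\[
\sum_{k\in\mathbb Z} F_{2k}\,z^{2k} \;=\; \tfrac12\bigl(F(z)+F(-z)\bigr) \;=\; \tfrac12 (I+R_2)F(z),
\]
valid for every $F(z)=\sum_\ell F_\ell z^\ell$ in $L^2(\mathbb T)$, together with its analogue $\tfrac12(I-R_2)F(z)$ for the odd part. This is the one piece of computation I would isolate as a lemma before starting.

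For $P_{V_{-1}}$, I start from $f=\sum_n c_n\,\phi(\cdot-n)$ with $c_n=\langle f,\phi(\cdot-n)\rangle$, so that $Zf(z)=\sum_n c_n z^n$. An orthonormal basis of $V_{-1}$ is $\{2^{-1/2}\phi(\cdot/2-k)\}_k$, and the two-scale relation gives $\phi(x/2-k)=2\sum_n h_n\phi(x-2k-n)$. From this I read off
\[
\langle f,\phi(\cdot/2-k)\rangle \;=\; 2\sum_n \overline{h_n}\,c_{2k+n},
\]
which I recognize as twice the coefficient of $z^{2k}$ in $\overline{H(z)}\,Zf(z)=\overline{H_+}Zf(z)$, viewed as a Laurent series on $\mathbb T$. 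Substituting both expansions into $P_{V_{-1}}f = \tfrac12 \sum_k \langle f,\phi(\cdot/2-k)\rangle\,\phi(\cdot/2-k)$ and collecting coefficients in front of $\phi(\cdot-m)$, the $Z$-transform becomes
\[
Z P_{V_{-1}}f(z) \;=\; 2\,H(z)\sum_k z^{2k} \bigl(\overline{H_+}Zf\bigr)_{2k} \;=\; H_+\,(I+R_2)\,\overline{H_+}\,Zf(z),
\]
where the last step uses the downsampling identity. This establishes the first formula.

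For $P_{W_{-1}}$ I have two natural routes and would present the shorter one. Repeating the same computation with $\psi$ and $G$ in place of $\phi$ and $H$ yields $G_+(I+R_2)\overline{G_+}Zf$; alternatively, I use $P_{W_{-1}}=I-P_{V_{-1}}$ on $V_0$. Taking the subtraction route,
\[
ZP_{W_{-1}}f(z) \;=\; \bigl(1-|H(z)|^2\bigr)Zf(z) - H(z)\,\overline{H(-z)}\,Zf(-z),
\]
and the Smith--Barnwell identity $|H(z)|^2+|H(-z)|^2=1$ replaces $1-|H(z)|^2$ by $|H(-z)|^2$. Expanding $\overline{H_-}(I-R_2)H_-\,Zf(z)$ directly gives precisely $|H(-z)|^2Zf(z)-\overline{H(-z)}H(z)Zf(-z)$, matching the expression above. (If instead one goes via $G$, the identity $G(z)=-z\overline{H(-z)}$ and the observation $\overline{G(z)}=-z^{-1}H(-z)$ on $\mathbb T$ convert $G_+(I+R_2)\overline{G_+}$ into $\overline{H_-}(I-R_2)H_-$ after cancelling the $\pm z^{\pm 1}$ factors.)

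The only real obstacle is bookkeeping: keeping track of the normalization factors coming from the two-scale relation, the shift of summation indices $m=2k+\ell$, and the distinction between multiplication by $H(z)$ (convolution of coefficients) and by $\overline{H(z)}$ (correlation). Once the downsampling/upsampling identity and the Smith--Barnwell relation are in hand, both formulas drop out mechanically, so I would structure the write-up so these two identities are quoted at the start and the rest of the proof is a single chain of equalities for each projection.
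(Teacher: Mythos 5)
Your proof is correct and follows essentially the same route as the paper: the paper takes the formula $ZP_{V_{-1}}f(z)=H(z)\left(\overline{H(z)}Zf(z)+\overline{H(-z)}Zf(-z)\right)$ as given (from the characterization of $V_{-1}$ stated just before the proposition), whereas you derive it from the two-scale relation and the downsampling identity, which is a welcome filling-in rather than a different argument. The only real divergence is in the $W_{-1}$ step: the paper computes $G(z)\left(\overline{G(z)}Zf(z)+\overline{G(-z)}Zf(-z)\right)$ directly and cancels the $\pm z^{\pm1}$ factors using $G(z)=-z\overline{H(-z)}$, while you use $P_{W_{-1}}=I-P_{V_{-1}}$ on $V_0$ together with the Smith--Barnwell identity; both are one-line equivalent manipulations (and you correctly note the $G$-route as an alternative), so there is nothing to fix.
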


\begin{proof}
We first observe that the composition of down and up-sampling sets every other coefficient in the expansion
of $f$ to zero. After applying $Z$ this amounts to a periodization.

By definition, the projection onto $V_{-1}$ satisfies
$$
   Z P_{V_{-1}} f(z) =  H(z) \left(\overline{H(z)} Zf(z) + \overline{H(-z)} Zf(-z)\right) =  H_+ (I+R_2) \overline{H_+} Zf(z) \, .
$$
Similarly, the projection onto $W_{-1}$ is
\begin{align*}
  Z P_{W_{-1}} f(z) & = G(z) \left(\overline{G(z)} Zf(z) + \overline{G(-z)} Zf(-z)\right) \\
     & = -  z \overline {H(-z)} \left(-\overline{z} H(-z) Zf(z) +\overline{z} H(z) Zf(-z)\right) \\
     & =\overline{H(-z)} \left(H(-z)Zf(z) - H(z)Zf(-z)\right) \\
     & = \overline{H_-} (I-R_2) H_- Zf(z) \, .
\end{align*}
The proposition is proved.
\end{proof}

The relevance of these identities lies in the fact that $(I+R_2)\overline{H_+}Zf$ is an even function whereas $(I-R_2) H_- Zf$
is odd. Hence knowing every other coefficient in the series expansion is sufficient to determine the projection onto the
corresponding subband. Thus, in this case downsampling reduces the data without loss of information.


\subsubsection{$M$-Band Wavelets}

If instead of a dilation factor of $2$ in the two-scale relation, a factor of $M$ is used, $M-1$ wavelets are
necessary to complement the translates of $\phi$ to an orthonormal basis of the next higher resolution level.
In this situation, it is an $M\times M$ matrix which has to satisfy the orthogonality identity. Generalizing
the consideration in the previous subsection, let $\omega= e^{-2\pi i /M}$ and $R_M Z f(z) = Zf(\omega z)$
and the scaling mask $H_0$ for $\phi$ satisfy $\sum_{j=0}^{M-1} | H_0(\omega^j z)|^2 = 1$. We then define
the orthogonal
projection onto $V_{-1}$ in terms of the transform 
\[
ZP_{V_{-1}} f = H_0 \left(\sum_{j=0}^{M-1} R_M^j\right) \overline{H_0}Zf.
\]
For a proof that $P_{V_{-1}}$ is indeed a projection, see the more general statement in the next theorem.

We complement the filter $H_0$ by finding $H_n$ such that $(H_n(\omega^\ell z))_{n,\ell=0}^{M-1}$ is unitary for almost every
$z \in \mathbb T$. Once the wavelet masks $H_\ell, \ell=1,\ldots, M-1$ are constructed by matrix extension, the
wavelet functions $\psi_\ell, \ell=1,\ldots, M-1$ are given by $\widehat\psi_\ell(M\xi) = H_\ell(e^{-2\pi i \xi})\widehat\phi(\xi), \xi\in\bR$,
$\ell=1,\ldots, M-1$. It is well-known that then $\{\psi_\ell: \ell=1,\ldots,M-1\}$ generates an orthonormal wavelet basis for
$L^2(\bR)$.

One goal in $M$-band wavelet design is to choose $H_0$ and then to complete the matrix so that the
filters $H_n$ impart desirable properties on the associated scaling function and wavelets. In fact, one can construct
orthonormal scaling functions for any dilation factor $M\ge 2$ and the matrix extension technique applies for any
dilation factor $M\ge 2$. When $M>2$, the orthonormal bases can be built to be with symmetry (\cite{HKZ09,HanZhuang2010Mat, HanZhuang2013}).

In the same terminology as Proposition \ref{prop:projections}, we now have the following result that identifies
the orthogonal projections belonging to $M$-band wavelets.

\begin{theorem}
\label{theo:projections2}
Let $\{H_n\}_{n=0}^{M-1}$ be such that $(H_n(\omega^\ell z))_{n,\ell=0}^{M-1}$ is unitary for almost every $z \in \mathbb T$,
and let $\{P_{W_{-1,\ell}}\}_{\ell=0}^{M-1}$ be the operators defined by
$$
     Z P_{W_{-1,\ell}} f := H_\ell  \left(\sum_{j=0}^{M-1} R_M^j\right) \overline{H_\ell} Zf.
$$
Then $\{P_{W_{-1,\ell}}\}_{\ell=0}^{M-1}$ are mutually orthogonal projections (note that $P_{W_{-1},0}=P_{V_{-1}}$).
\end{theorem}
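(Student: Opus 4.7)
The plan is to verify three properties for each pair $(\ell,k)$: that $P_{W_{-1,\ell}}$ is self-adjoint on $Z(V_0) \subset L^2(\mathbb T)$, and that $P_{W_{-1,\ell}} P_{W_{-1,k}} = \delta_{\ell k} P_{W_{-1,\ell}}$. All three statements will follow by manipulating the composition of multiplication operators $M_g$ (with $M_g F = gF$) and the periodization operator $S := \sum_{j=0}^{M-1} R_M^j$, so that $Z P_{W_{-1,\ell}} = M_{H_\ell} \circ S \circ M_{\overline{H_\ell}}$.

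The key algebraic identity I would establish first is
\[
S \circ M_g \circ S = M_{Sg} \circ S,
\]
which holds because $SF(\omega z) = SF(z)$, so $SF$ is invariant under $R_M$; therefore $S(g \cdot SF)(z) = \sum_{i=0}^{M-1} g(\omega^i z)\, SF(\omega^i z) = SF(z)\cdot \sum_i g(\omega^i z) = (Sg)(z)\, SF(z)$. Applying this to the composition,
\[
Z P_{W_{-1,\ell}} P_{W_{-1,k}} = M_{H_\ell}\, S\, M_{\overline{H_\ell} H_k}\, S\, M_{\overline{H_k}} = M_{H_\ell \cdot S(\overline{H_\ell} H_k)}\, S\, M_{\overline{H_k}}.
\]
The scalar factor $S(\overline{H_\ell} H_k)(z) = \sum_{j=0}^{M-1} \overline{H_\ell(\omega^j z)} H_k(\omega^j z)$ is precisely an entry of $A A^*$, where $A_{n\ell}(z) = H_n(\omega^\ell z)$; by the hypothesis that this matrix is unitary a.e., this sum equals $\delta_{\ell k}$. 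Thus $P_{W_{-1,\ell}} P_{W_{-1,k}} = \delta_{\ell k}\, P_{W_{-1,\ell}}$, giving both idempotency ($\ell = k$) and mutual orthogonality of ranges ($\ell \ne k$).

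For self-adjointness, I would use $M_g^* = M_{\overline g}$ and the fact that $S$ is self-adjoint on $L^2(\mathbb T)$ (with normalized Haar measure): a change of variable $u = \omega^j z$ gives $\langle SF, G\rangle = \sum_j \int F(\omega^j z) \overline{G(z)}\,dz = \int F(u) \overline{SG(u)}\,du = \langle F, SG\rangle$. Taking adjoints in reverse order,
\[
(M_{H_\ell} \circ S \circ M_{\overline{H_\ell}})^* = M_{\overline{H_\ell}}^* \circ S^* \circ M_{H_\ell}^* = M_{H_\ell} \circ S \circ M_{\overline{H_\ell}},
\]
so each $P_{W_{-1,\ell}}$ is self-adjoint. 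Combined with the idempotency from the previous step, each $P_{W_{-1,\ell}}$ is an orthogonal projection, and the orthogonality of ranges follows automatically from $P_{W_{-1,\ell}} P_{W_{-1,k}} = 0$ for $\ell \ne k$ together with self-adjointness.

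The only mildly subtle step is the identity $S M_g S = M_{Sg} S$; once this observation is in place, the rest is routine algebra and a direct appeal to the unitarity hypothesis. I would expect this to be the main conceptual step, since it reduces all subsequent manipulations to pointwise statements about the matrix $(H_n(\omega^\ell z))_{n,\ell}$.
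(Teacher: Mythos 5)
Your proof is correct and follows essentially the same route as the paper's: self-adjointness from $S=\sum_j R_M^j$ being Hermitian and conjugation by the multiplication operator, and $P_{W_{-1,\ell}}P_{W_{-1,k}}=\delta_{\ell k}P_{W_{-1,\ell}}$ by collapsing $S M_{\overline{H_\ell}H_k} S$ via the row orthonormality $\sum_j \overline{H_\ell(\omega^j z)}H_k(\omega^j z)=\delta_{\ell k}$. Your identity $S M_g S = M_{Sg} S$ is just a clean repackaging of the paper's commutation relation $R_M\overline{H_\ell}=\overline{H_\ell(\omega\,\cdot)}R_M$ together with $R_M^m S = S$.
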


\begin{proof}
We first observe that by the assumed unitarity,
every row is normalized, and  each pair of rows is mutually orthogonal, i.e.,
$$
   \sum_{\ell=0}^{M-1} \overline{H_n(\omega^{\ell} z)}  H_m(\omega^{\ell} z)  = \delta_{n,m} \,,
$$
where $\delta_{n,m}=1$ if $n=m$ and $\delta_{n,m}=0$ otherwise.

Next, we show that each $P_{W_{-1,\ell}}$ is an orthogonal projection.
To begin with, we see that $P_{W_{-1,\ell}}$ is Hermitian because the sum $\sum_{j=0}^{M-1} R_M^j$ is
and this property is retained when it is conjugated by the multiplication operator $H_\ell$.
The fact that each $P_{W_{-1,\ell}}$ is idempotent
and that the projections are mutually orthogonal is due to the commutation relation
$$
    R_M \overline{H_\ell}= \overline{H_\ell(\omega \, \cdot)} R_M
$$
and because of the orthogonality of the rows in $(H_m(\omega^\ell z))_{m,\ell=0}^{M-1}$.
We have for $f \in L^2(\mathbb R)$ and almost every $z \in \mathbb T$,
\begin{align*}
Z P_{W_{-1,\ell}}  P_{W_{-1,k}} f(z)&= H_\ell \sum_{m=0}^{M-1} R_M^m  \overline {H_\ell} H_k \sum_{n=0}^{M-1} R_M^n \overline{H_k} Z f(z)\\
   & =    H_\ell \sum_{m=0}^{M-1}   \overline{H_\ell(\omega^{m}z)} H_k(\omega^{m}z) R_M^m \sum_{n=0}^{M-1} R_M^n \overline{H_k} Z f(z)\\
   & =    H_\ell \sum_{m=0}^{M-1}   \overline{H_\ell(\omega^{m}z)} H_k(\omega^{m}z) \sum_{n=0}^{M-1} R_M^{n} \overline{H_k} Z f(z)\\
   & =   \delta_{\ell,k}  H_\ell  \sum_{n=0}^{M-1} R_M^n \overline{H_k} Z f(z)= \delta_{\ell,k} Z P_{W_{-1,\ell}} f(z)\, .
\end{align*}
This finishes the proof.
\end{proof}


\subsection{From Group-Based to Cone-Adapted Shearlets}
\label{subsec:Shearlets}

In contrast to wavelets, shearlet systems are based on three operations: scaling, translation, and
shearing; the last one to change the orientation of those anisotropic functions. Letting the
(parabolic) scaling matrix $A_j$ be defined by
\[
A_{j} = \left(\begin{array}{cc} 4^j & 0 \\ 0  & 2^{j}  \end{array}\right), \quad j \in \bZ,
\]
and the shearing matrix $S_k$ be
\[
S_k =  \left(\begin{array}{cc}  1 & -k \\ 0 & 1  \end{array}\right), \quad k \in \bZ.
\]
Then, for some generator $\psi \in L^2(\bR^2)$, the {\em group-based shearlet system} is defined by
\[
\{2^{\frac{3j}{2}} \psi(S_{k} A_{j} \, \cdot \, - m) \: : \: j,k \in \bZ, m \in \bZ^2\}.
\]

Despite the nice mathematical properties -- this system can be regarded as arising from a representation
of a locally compact group, the shearlet group -- group-based shearlet systems suffer from
the fact that they are biased towards one axis which prevents a uniform treatment of directions. Cone-adapted
shearlet systems circumvent this problem, by utilizing a particular splitting of the frequency domain into a
vertical and horizontal part. For this, we set $A_j^h := A_j$, $S_k^h := S_k$,
\[
A_{j}^v = \left(\begin{array}{cc} 2^{j} & 0 \\ 0  & 4^j  \end{array}\right),
\quad \mbox{and} \quad
S_k^v =  \left(\begin{array}{cc}  1 & 0 \\ -k & 1  \end{array}\right), \quad j, k \in \bZ.
\]
Given a scaling function $\phi \in L^2(\bR^2)$ and some $\psi \in L^2(\bR^2)$, the {\em cone-adapted
shearlet system} is defined by
\begin{eqnarray*}
\{\phi(\cdot-m) : m \in \bZ^2\}
& \cup & \{2^{3j/2} \psi(S^h _kA^h_{j}\, \cdot \, -m) : j \ge 0, |k| \leq  2^{j}, m \in \bZ^2 \}\\
& \cup & \{2^{3j/2} \tilde{\psi}({S}^v_k A^v_{j}\, \cdot \, -m) : j \ge 0, |k| \leq  2^{j} , m \in \bZ^2 \},
\end{eqnarray*}
where $\tilde{\psi}(x_1,x_2) = \psi(x_1,x_2)$. For more details on shearlets, we refer to \cite{KL12}.

Gabor shearlets will also be constructed first as group based systems, and then in a cone-adapted
version. However, in contrast to other constructions, we aim at low redundancy in the group-based system and
avoid increasing it in the cone adaptation.


\subsection{Gabor Frames}
\label{subsec:GaborFrames}

Like the previous systems, Gabor systems are based on translation and modulation. As usual, we
denote the modulations on $L^2(\mathbb R)$ by $M_m f(\xi) = e^{2\pi i m \xi} f(\xi)$.

By definition of tightness, a square-integrable function $w: \mathbb R \to \mathbb C$ is the window of a
$b^{-1}$-tight Gabor frame, if it is unit norm and for all $f \in L^2(\mathbb R)$,
\[
   \| f\|^2 = b \sum_{m,n\in \mathbb Z} |\langle f,  M_{m b} T_n w\rangle |^2 \, .
\]
For more details on Gabor systems, we refer the reader to \cite{Gro01}.

Various ways to construct such a window function $w$ are known. We recall
a construction of a $b^{-1}$-tight Gabor frame with $b^{-1}>1$ arbitrarily close to 1  \cite{DGM86}.

\begin{example}
\label{exa:Gabor}
Let $\nu$ be in $C^\infty(\mathbb R)$ and $\nu(x)=0$ for $x \le 0$, $\nu(x)=1$ for $x \ge 1$ and $\nu(1-x)+\nu(x)=1$.  Let $w(x):=(\nu((1/2+\varepsilon-|x|)/2\varepsilon))^{1/2}$,
$x\in\mathbb R$.
Then, it is easy to show that $w$ is a smooth
function with support belonging to $[-1/2-\varepsilon, 1/2+\varepsilon]$
for any  $0<\varepsilon \le 1/2$, $\|w\|^2=1$ and $\sum_n|T_n w |^2=1$. Consequently,
if $b=(1+2\epsilon)^{-1}$, then $\{M_{mb}T_{n} w: m , n \in \mathbb Z\}$ defines a $b^{-1}$-tight
Gabor frame.

%
%
%
%
%

 \end{example}


\subsection{Redundancy}
\label{subsec:redundancy}

Since we cannot achieve (P1), but would like to approximate this property, besides the classical frame definition,
we also require a notion of redundancy.
The first more refined definition of redundancy besides the classical ``number of elements divided by
the dimension'' definition was introduced in \cite{BCK11}. The extension of this definition
to the infinitely dimensional case can be found in \cite{CCH}. Since this work is not intended for publication,
we make this subsection self-contained.

We start by recalling a redundancy function, which provides a means to measure the concentration of the frame
close to one vector.
%
If $\{\varphi_i\}_{i \in I}$ is a frame for a real or complex Hilbert space $\cH$ without any zero vectors, and
let $\bS = \{x \in \cH : \|x\|=1\}$, then for each $x \in \bS$, the associated {\em redundancy function}
$\cR : \bS \to \bR^+\cup \{\infty\}$ is  defined by
\[
\cR(x) = \sum_{i \in I} \norm{\varphi_i}^{-2} \left| \left\langle x,\varphi_i\right\rangle \right|^2.
\]

Taking the supremum or the infimum over $x$ in this definition gives rise to the so-called upper and lower redundancy, which is in fact the
upper and lower frame bound of the associated normalized frame,
\[
\cR^+ = \sup_{x\in \bS} \cR(x)
\quad \mbox{ and } \quad
\cR^- = \inf_{x\in\bS} \cR(x).
\]
For those values, it was proven in \cite{BCK11}
that in the finite-dimensional situation, the upper redundancy provides a means to measure the minimal number of
linearly independent sets, and the lower redundancy is related to the maximal number of spanning sets, thereby
linking analytic to algebraic properties.


It is immediate to see that an orthonormal basis satisfies $\cR^- = \cR^+ = 1$, and a unit norm $A$-tight
frame $\cR^- = \cR^+ = A$. This motivates the following definition.

\begin{definition}
A frame $\{\varphi_i\}_{i= \in I}$ for a real or complex Hilbert space has a {\em uniform redundancy}, if
$
\cR^- = \cR^+ \, ,
$
and if it is unit norm and $A$-tight, then we say that it has {\em redundancy}
$A$.
\end{definition}
In the sequel, we will use the redundancy to determine
to which extent (P1) is satisfied.


\section{Group-Based Gabor Shearlets}
\label{sec:GroupGaborShearlets}

Let us start with an informal description of the construction of Gabor shearlets in a special case with
the goal to first provide some intuition for the reader.

Generally speaking, the shearlet construction in this paper is a Meyer-type modification of a
multiresolution analysis based on the Shannon shearlet scaling function
$
   \widehat \Phi_{0,0,0} = \chi_{K},
$
where $K= \{\xi \in \mathbb R^2: | \xi_1 | \le 1 \mbox{ and } |\xi_2/\xi_1 | \le 1/2 \}$. For an illustration,
we refer to Figure~\ref{fig:Shannonshear}.
\begin{figure}[h]
\begin{center}
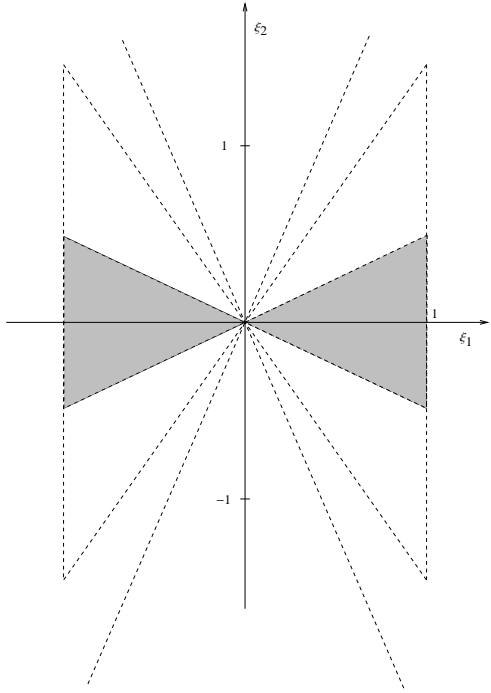
\end{center}
\caption{Support of the scaling function belonging to group-based Shannon shearlets (as well as the group-based Gabor shearlets)  in the frequency domain.
Additional lines indicate the boundaries of the support for sheared scaling functions.}
\label{fig:Shannonshear}
\end{figure}

It is  straightforward to verify that chirp modulations
\[
  \widehat \Phi_{0,0,m}(\xi) =  \chi_{K}(\xi) e^{2\pi i m_2 \xi_2/\xi_1} e^{\pi i m_1 \xi_1^3/|\xi_1|}, \, \, m=(m_1,m_2)\in{\mathbb Z}^2,
\]
define an orthonormal system $\{\Phi_{0,0,m}: m \in \mathbb Z^2\}$, while the use of the usual modulations
\[
  \widehat \Upsilon_{0,0,m}(\xi) = \chi_{K}(\xi) e^{2\pi i m_2 \xi_2} e^{\pi i m_1 \xi_1} \,
\]
gives a 2-tight frame $\{\Upsilon_{0,0,m}: m \in \mathbb Z^2\}$ for its span. The same is true when
the modulations are augmented with shears, $ \widehat \Phi_{0,k,m}(\xi) = \widehat \Phi_{0,0,m}(\xi_1,\xi_2-k\xi_1)$
and likewise for $\widehat \Upsilon_{0,k,m}$, in order to form the orthonormal or tight systems
$\{\Phi_{0,k,m}: k \in \mathbb Z, m \in \mathbb Z^2\}$ or $\{\Upsilon_{0,k,m}: k \in \mathbb Z, m \in \mathbb Z^2\}$,
respectively. Because both systems are unit-norm, the tightness constant is a good measure for redundancy as detailed
in Subsection \ref{subsec:redundancy}, indicating that chirp modulations are preferable from this point
of view. Incorporating parabolic scaling preserves those properties.

In a second step (Section \ref{sec:ConeGaborShearlets}) the strategy of shearlets is followed to derive a
cone-adapted version, which provides the property of a uniform treatment of directions necessary for optimal
sparse approximation results. A further necessary ingredient for optimal sparsity are good decay properties.
We show that a combination of Gabor frames, Meyer wavelets and a change of coordinates provides
smooth alternatives for the characteristic function, yet with still near-orthonormal shearlet systems
that are similar to the Shannon shearlet we described.


\subsection{Construction using Chirp Modulations}
\label{subsec:GroupConstruction}

To begin the shearlet construction, we examine an alternative group of translations acting as chirp modulations
in the frequency domain. These modulations do not correspond to the usual Euclidean translations, but for
implementations in the frequency domain this is not essential. In the following, we use the notation
$\bR^* := \bR \setminus \{0\}$.

\begin{definition}
Let $\gamma(\xi):=(\gamma_1(\xi),\gamma_2(\xi))$ with $\gamma_1(\xi):=\frac12\xi_1^3/|\xi_1|=\frac12\sgn(\xi_1) \xi_1^2$ and $\gamma_2(\xi):
=\frac{ \xi_2}{\xi_1}$ for $\xi=(\xi_1,\xi_2)\in \bR^*\times\bR$. We define the two-dimensional \emph{chirp-modulations}
$\{X_\beta: \beta \in \mathbb R^2\}$ by
\[
    X_{\beta} \widehat f(\xi) = e^{2\pi i \beta_1 \gamma_1(\xi)} e^{2\pi i \beta_2 \gamma_2(\xi)} \widehat f(\xi),\quad \xi \in \mathbb R^* \times \mathbb R \, .
\]
\end{definition}

We emphasize that the set with $\xi_1=0$ is excluded from the domain, which does not cause problems
since it has measure zero.

Next, notice that the point transformation $\gamma$ has a Jacobian of magnitude one and is a bijection
on $\mathbb R^* \times \mathbb R$. Therefore, it defines a unitary operator $\Gamma$ according to
$$
  \Gamma \widehat f(\xi) = \widehat f(\gamma(\xi)), \quad\xi\in\bR^*\times\bR.
$$

As discussed in Subsection \ref{subsec:Shearlets}, the shear operator is a further ingredient of shearlet systems.
By abuse of notation, for any $s \in \mathbb R$, we will also regard $S_s$ as an operator, that is
\[
S_{s} \widehat f( \xi_1,\xi_2) = \widehat f(\xi_1, \xi_2- s \xi_1).
\]

The benefit of choosing the chirp-modulations is that shearing and modulation satisfy the well-known Weyl-Heisenberg
commutation relations. The proof of the following result is a straightforward calculation, hence we omit it.

\begin{proposition}
\label{prop:Weyl}
For $s \in \mathbb R$ and $\beta  \in \mathbb R^2$,
$$
  S_s X_\beta  = e^{- 2\pi i \beta_2 s} X_\beta S_s  \, .
$$
\end{proposition}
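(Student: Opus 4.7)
The plan is to verify the commutation relation by direct computation on the frequency side, exploiting two observations about how $\gamma_1$ and $\gamma_2$ behave under the frequency-domain shear.

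First, I would fix $\widehat f \in L^2(\mathbb R^* \times \mathbb R)$ and apply the definitions step by step to $S_s X_\beta \widehat f(\xi_1,\xi_2)$. Unfolding $S_s$ first and then $X_\beta$ yields
\[
S_s X_\beta \widehat f(\xi_1,\xi_2) = e^{2\pi i \beta_1 \gamma_1(\xi_1,\xi_2 - s\xi_1)}\, e^{2\pi i \beta_2 \gamma_2(\xi_1,\xi_2 - s\xi_1)}\, \widehat f(\xi_1,\xi_2 - s\xi_1).
\]
The crux is then to track what the shear does to the two components of $\gamma$. Since $\gamma_1(\xi) = \tfrac12 \sgn(\xi_1)\xi_1^2$ depends only on $\xi_1$, it is invariant under $S_s$, so $\gamma_1(\xi_1,\xi_2 - s\xi_1) = \gamma_1(\xi)$. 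For the second component, $\gamma_2(\xi_1,\xi_2 - s\xi_1) = (\xi_2 - s\xi_1)/\xi_1 = \gamma_2(\xi) - s$, which produces precisely a scalar phase $e^{-2\pi i \beta_2 s}$ in front. Factoring this scalar out and recognizing the remaining product as $X_\beta S_s \widehat f(\xi_1,\xi_2)$ finishes the identity.

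I do not foresee a real obstacle here; the whole content of the proposition is the affine transformation law $\gamma_2(\xi_1,\xi_2 - s\xi_1) = \gamma_2(\xi) - s$ together with the invariance of $\gamma_1$ under shears. The only mild point worth noting is that the computation takes place on $\mathbb R^* \times \mathbb R$, so $\xi_1 \ne 0$ is needed to even write $\gamma_2$; this is harmless since the line $\{\xi_1 = 0\}$ has Lebesgue measure zero and the operators $S_s$ and $X_\beta$ have already been defined off that set. If a more conceptual proof is desired, one could alternatively observe that conjugating $X_\beta$ by the unitary $\Gamma$ (pushing forward via $\gamma$) turns $X_\beta$ into an ordinary modulation and turns $S_s$ into a translation in the $\gamma_2$-direction; the Weyl–Heisenberg commutation relations between translations and modulations then immediately give the stated phase $e^{-2\pi i \beta_2 s}$. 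This explains the terminology used in the statement, but the direct calculation above is the shortest route.
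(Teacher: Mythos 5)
Your computation is correct and is precisely the ``straightforward calculation'' that the paper alludes to and omits: the shear leaves $\gamma_1$ invariant and sends $\gamma_2(\xi)$ to $\gamma_2(\xi)-s$, which produces the scalar phase $e^{-2\pi i\beta_2 s}$. Nothing further is needed.
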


The last ingredient is a scaling operator which gives parabolic scaling.
Again abusing notation, we write the dilation operator with $A_j$.
%
For $j \in \mathbb Z$, we let $A_j$ be the dilation operator acting on $f \in L^2(\mathbb R^2)$ by
\[
  A_j \widehat f (\xi_1,\xi_2) = 2^{-3j/2} \widehat f(2^{-2 j} \xi_1, 2^{-j} \xi_2)\,
\]
for almost very $\xi=(\xi_1,\xi_2) \in \mathbb R^2$.

The last ingredient to define group-based Gabor shearlets are the generating functions to which
those three operators are then applied.
%
For this, let $\phi$ be an orthogonal scaling function of
a $16$-band multiresolution analysis in $L^2(\mathbb R)$, with associated orthonormal wavelets
$\{\psi_\ell\}_{\ell=1}^{15}$, and let $w$ be the unit norm window function of a $b^{-1}$-tight
Gabor frame $\{M_{m_2b}T_{k} w: m_2, k \in \mathbb Z\}$ for $L^2(\mathbb R)$. Then we define the generators
\[
\widehat{\Phi_{0,0,0}}:=\Gamma\widehat\phi\otimes w
\quad \mbox{and} \quad
\widehat{\Psi^\ell_{0,0,0}}:=\Gamma\widehat\psi_\ell\otimes w, \;\; \ell=1,\ldots,15.
\]
in $L^2(\mathbb R^2) = L^2(\mathbb R) \otimes  L^2(\mathbb R)$, based on which we now define group-based Gabor shearlets.

\begin{definition}
\label{defi:GroupGaborShearlets}
Let $\Phi_{0,0,0}$ and $\Psi^\ell_{0,0,0}$, $\ell=1,\ldots,15$, and $w$ be defined as above. Let $j_0\in\bZ$. Then the {\em group-based
Gabor shearlet system} is defined by
\[
\cG\cG\cS_{j_0}(\phi,\{\psi_\ell\}_{\ell=1}^{15};w) :=  \{\Phi_{j_0,k,m} : k \in \bZ, m \in \bZ^2\} \cup \{\Psi_{j,k,m}^\ell : j,k \in \bZ, j\ge j_0, m \in \bZ^2, \ell=1,\ldots,15\} \subseteq L^2(\mathbb R^2),
\]
where
\[
\begin{aligned}
\widehat{\Phi_{j,k,m}}(\xi)&=A_jX_{(m_1,m_2b)}S_{k}\widehat\Phi_{0,0,0}\\&=
2^{-3j/2}\widehat\phi(2^{-4j}\gamma_1( \xi)) w(2^{j}\gamma_2(\xi)-k)
e^{2\pi i m_12^{-4j}\gamma_1(\xi)}e^{2 \pi i m_2 b
2^{j}\gamma_2(\xi)},
\end{aligned}
\]
and
\[
\begin{aligned}
  \widehat{\Psi^\ell_{j,k,m}}(\xi)&=A_jX_{(m_1,m_2b)}S_k\widehat\Psi^\ell_{0,0,0}
\\&=
2^{-3j/2}\widehat\psi_\ell(2^{-4j}\gamma_1(\xi)) w (2^{j}\gamma_2(\xi)-k)
e^{2\pi i m_12^{-4j}\gamma_1(\xi)}e^{2 \pi i m_2 b
2^{j}\gamma_2(\xi)}.
\end{aligned}
\]
\end{definition}

The particular choice of dilation factors in the first and second coordinate comes
from the need for parabolic scaling and integer dilations. The motivation is that the regularity
of the singularity in the cartoon-like model is $C^2$, and if the generator satisfies $width =length^2$
one can basically linearize the curve inside the support with controllable error by the Taylor
expansion. Since we utilize a different group operation, it is not immediately clear
which scaling leads to the size constraints $width  = length^2$. An integer value of $j$ requires $4j = j^2$, so $j=4$.
Then one
considers the intertwining relationship between the dilation operator $A_4$ and
the standard one-dimensional dyadic dilation $D$ to deduce
$
   A_4 \Gamma  = \Gamma D^{-16} \otimes D^4 \, ,
$
which explains the choice of $M=16$ bands.





\subsection{MRA Structure}
\label{subsec:GroupMRA}

One crucial question is whether the just introduced system is associated with an MRA structure.
As a first step, we define associated scaling and wavelet spaces.

\begin{definition}
\label{defi:GroupGaborShearletMRA}
Let $\Phi_{j,k,m}$ and $\Psi^\ell_{j,k,m}$, $j, k \in \bZ, m \in \bZ^2, \ell=1,\ldots,15$ be defined as in Definition~\ref{defi:GroupGaborShearlets}. For each $j \in \bZ$,
the {\em scaling space} $V_j$ is the closed subspace
\[
V_j =  \overline{\mathrm{span}}\{\Phi_{j,k,m} : k \in \bZ, m \in \bZ^2\} \subseteq L^2(\mathbb R^2),
\]
and the associated {\em wavelet space} $W_j$ is defined by
\[
\begin{aligned}
W_j =  \overline{\mathrm{span}} \{\Psi_{j,k,m}^\ell : k \in \bZ, m \in \bZ^2, \ell=1,\ldots,15\}.
\end{aligned}
\]
\end{definition}

Next, we establish that the group-based Gabor shearlet system is indeed associated with an MRA structure,
and analyze how close it is to being an orthonormal basis.

\begin{theorem}
\label{thm:subspace}
Let $\Phi_{j,k,m}$ and $\Psi^\ell_{j,k,m}$, $j, k \in \bZ, m \in \bZ^2, \ell=1,\ldots,15$ be defined as in Definition~\ref{defi:GroupGaborShearlets} and let $\{V_j\}_{j\in \mathbb Z}$ and $\{W_j\}_{j\in \mathbb Z}$ be the associated scaling
and wavelet spaces as defined in Definition~\ref{defi:GroupGaborShearletMRA}. Then, for each $j \in \mathbb Z$, the family $\{\Phi_{j,k,m}: k \in \mathbb Z, m \in \mathbb Z^2\}$
is a unit norm $b^{-1}$-tight frame for $V_j$, and $\{\Psi^\ell_{j,k,m}: k \in \mathbb Z, m
\in \mathbb Z^2,  \ell=1, \ldots, 15\}$ forms a unit-norm $b^{-1}$-tight frame for $W_j$.
\end{theorem}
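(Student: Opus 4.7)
The plan is to use the unitary reparametrization $\Gamma$ to reduce the two-dimensional claim to a tensor product of a one-variable MRA orthonormality statement and a one-variable Gabor frame statement, and then transport the result back to the plane by unitarity.

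First I would pull everything through $\Gamma$. Using the substitution $u=\gamma(\xi)$ together with the explicit definitions of $S_k$, $X_{(m_1,m_2 b)}$, and $A_j$, a direct computation produces the tensor factorization
\[
(\Gamma^{-1}\widehat{\Phi_{j,k,m}})(u)=\bigl[2^{-2j}\widehat\phi(2^{-4j}u_1)e^{2\pi i m_1 2^{-4j}u_1}\bigr]\,\bigl[2^{j/2}w(2^j u_2-k)e^{2\pi i m_2 b\cdot 2^j u_2}\bigr].
\]
Call the two bracketed functions $f^j_{m_1}(u_1)$ and $g^j_{k,m_2}(u_2)$. The factorization originates from three conjugation identities: $\Gamma$ conjugates chirps $X_\beta$ into ordinary modulations in $u$, shears $S_k$ into translations in $u_2$, and $A_j$ into the product dilation $D_{2^{4j}}\otimes D_{2^{-j}}$. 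Replacing $\widehat\phi$ by $\widehat{\psi_\ell}$ gives the analogous factorization $f^{j,\ell}_{m_1}\otimes g^j_{k,m_2}$ for $\widehat{\Psi^\ell_{j,k,m}}$.

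Second, I would analyze the two factors independently. After the rescaling $v_1=2^{-4j}u_1$, the family $\{f^j_{m_1}\}_{m_1\in\mathbb Z}$ is seen via Plancherel to be the Fourier image of the standard integer-translate orthonormal basis $\{2^{2j}\phi(2^{4j}\cdot-m_1)\}_{m_1}$ of the $16$-band MRA scaling space at resolution $j$, and is therefore an ONB of a closed subspace of $L^2(du_1)$; summing over $\ell=1,\ldots,15$ gives the corresponding ONB of the complementary $16$-band wavelet space for the $\Psi^\ell$ case. Similarly, the rescaling $v_2=2^j u_2$ identifies $g^j_{k,m_2}$ with $D_{2^{-j}}M_{m_2 b}T_k w$, so the assumed unit-norm $b^{-1}$-tight Gabor frame property of $\{M_{m_2 b}T_k w\}$ transfers through the unitary $D_{2^{-j}}$ to yield a unit-norm $b^{-1}$-tight Gabor frame of all of $L^2(du_2)$.

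Finally, tensoring an ONB of a subspace $H_1\subset L^2(du_1)$ with a unit-norm $b^{-1}$-tight frame of $L^2(du_2)$ produces a unit-norm $b^{-1}$-tight frame of $H_1\otimes L^2(du_2)$, and this property is preserved under the unitaries $\Gamma$ and the Fourier transform. Consequently $\{\Phi_{j,k,m}\}$ is a unit-norm $b^{-1}$-tight frame for its closed linear span, which is $V_j$ by Definition~\ref{defi:GroupGaborShearletMRA}; the analogous transport with the $\psi_\ell$ generators yields the conclusion for $W_j$. The unit-norm property alone is immediate from $\|\phi\|=\|w\|=1$ together with the unitarity of $\Gamma,A_j,X_\beta,S_k$. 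The main technical obstacle I anticipate is the clean verification of the three conjugation identities for $\Gamma$: although each is a direct consequence of the definitions of $\gamma_1,\gamma_2$ and the unimodular Jacobian of $\gamma$, it is this step that decouples the intrinsically two-dimensional shearlet system into a product of well-understood one-dimensional pieces.
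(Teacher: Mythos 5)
Your proposal is correct and follows essentially the same route as the paper: both proofs use the unitary $\Gamma$ (via Proposition~\ref{prop:Weyl} and its companions) to intertwine chirps with ordinary modulations, shears with translations in the second variable, and the parabolic dilation with a product dilation, thereby reducing the claim to the tensor product of the orthonormality of the $16$-band MRA system $\{M_{m_1}\widehat\phi\}$ (resp.\ $\{M_{m_1}\widehat\psi_\ell\}$) with the $b^{-1}$-tight Gabor frame $\{M_{m_2 b}T_k w\}$. The only cosmetic difference is that the paper normalizes to $j=0$ by unitarity of $A_j$ while you carry the scale $j$ explicitly through the factorization; the underlying argument is identical.
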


\begin{proof}
We first verify that the scaling function generates a $b^{-1}$-tight frame for a closed subspace of
$L^2(\mathbb R^2)$. By Proposition \ref{prop:Weyl}, the operator $\Gamma$ intertwines shears and
translations in the second component,
\[
S_k\Gamma\widehat f(\xi)= \Gamma \widehat f(\xi_1,\xi_2-k\xi_1)=\widehat f(\gamma_1(\xi),\gamma_2(\xi)-k) \,.
\]
Moreover, it intertwines chirp modulations with standard modulations. The overall dilation is irrelevant
because $A_j$ is unitary, so we can set $j=0$ for simplicity. Therefore, it is enough to prove that
$\{M_{m_1} \widehat \phi \otimes M_{m_2 b} T_{k}w \}$ defines a $b ^{-1}$-tight frame for $\Gamma^{-1}(\widehat{V}_j)$. This
follows from the fact that $w$ is the unit norm window function of a $b^{-1}$-tight Gabor frame and
from $\phi$ being an orthonormal scaling function.
Since the subspaces $\phi\otimes L^2(\mathbb R), \psi_\ell \otimes L^2(\mathbb R), \ell=1,\ldots,15$
are mutually orthogonal, and the functions $\{\phi,\psi_\ell: \ell=1,\ldots, 15\}$ satisfy a two-scale relation of an MRA
with dilation factor $M=16$ in $L^2(\mathbb R)$, the claim follows.
\end{proof}

\begin{theorem} The scaling and wavelet subspaces $V_0$ and $W_0$ as defined in Definition~\ref{defi:GroupGaborShearletMRA} satisfy the two-scale relation
$$
  V_ 0 \oplus W_0 = A_4 V_0 \, .
$$
\end{theorem}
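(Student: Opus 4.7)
The plan is to apply the unitary change of variables $\mathcal{U} := \Gamma^{-1}\cF$ (Fourier transform followed by pullback along $\gamma^{-1}$), under which each of the three subspaces $V_0$, $W_0$, $A_4 V_0$ decomposes as a tensor product whose first factor is a one-dimensional MRA space and whose second factor is all of $L^2(\bR)$. The identity then reduces to the standard two-scale relation of the 16-band MRA in $L^2(\bR)$.

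First I would verify the intertwining relations
\[
\Gamma^{-1} S_k \Gamma = T_{(0,k)}, \qquad \Gamma^{-1} X_\beta \Gamma = M_\beta, \qquad \Gamma^{-1} A_4 \Gamma = \tilde D_1 \otimes \tilde D_2,
\]
where $\tilde D_1, \tilde D_2$ are one-dimensional dilations. These follow by direct computation from $\gamma_1(\xi) = \frac12 \sgn(\xi_1)\xi_1^2$ and $\gamma_2(\xi) = \xi_2/\xi_1$, and are consistent with Proposition~\ref{prop:Weyl} as well as with the intertwining $A_4\Gamma = \Gamma(D^{-16}\otimes D^4)$ noted in the motivation. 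Applying $\mathcal{U}$ to the generators $\widehat{\Phi_{0,0,0}} = \Gamma(\widehat\phi\otimes w)$ and $\widehat{\Psi^\ell_{0,0,0}} = \Gamma(\widehat{\psi_\ell}\otimes w)$, and using the independence of the index $m_1$ from $(m_2,k)$, the defining generators factor across the two tensor slots and one obtains
\[
\mathcal{U}V_0 = \cF V_0^{\mathrm{MRA}} \otimes L^2(\bR), \quad \mathcal{U}W_0 = \cF W_0^{\mathrm{MRA}} \otimes L^2(\bR), \quad \mathcal{U}(A_4 V_0) = \cF V_1^{\mathrm{MRA}} \otimes L^2(\bR),
\]
where $V_0^{\mathrm{MRA}}$, $W_0^{\mathrm{MRA}}$, $V_1^{\mathrm{MRA}}$ denote the scaling, wavelet, and next-finer scaling spaces of the underlying 16-band MRA in $L^2(\bR)$. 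The second tensor factor equals $L^2(\bR)$ in each case, because the Gabor family $\{M_{m_2 b}T_k w\}$ is complete (tight) by Example~\ref{exa:Gabor}, and the unitary $\eta_2$-dilation induced by $A_4$ preserves this completeness.

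Tensoring the orthogonal two-scale relation $V_0^{\mathrm{MRA}} \oplus W_0^{\mathrm{MRA}} = V_1^{\mathrm{MRA}}$ of the 16-band MRA with $L^2(\bR)$ on the right and pulling back via the unitary $\mathcal{U}^{-1}$ then yields the claim. The main obstacle is the bookkeeping in the coordinate change: one must verify that, after conjugation by $\Gamma$, the first-coordinate dilation built into $A_4$ corresponds to exactly one refinement step of the 16-band MRA (equivalently, to $\tilde D_1$ acting as the standard $M$-band refinement operator on $\widehat\phi$). This matching is the quantitative heart of the argument and is precisely what dictates the choice $M=16$ and the scaling exponent in the theorem's statement.
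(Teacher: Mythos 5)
Your proposal is correct and follows essentially the same route as the paper: conjugate by $\Gamma$ to expose the tensor-product structure, note that the second factor is all of $L^2(\mathbb R)$ by completeness of the Gabor frame, and reduce the claim to the one-dimensional two-scale relation $V_0^{\mathrm{MRA}}\oplus W_0^{\mathrm{MRA}}=V_1^{\mathrm{MRA}}$ of the $16$-band MRA via the intertwining $A_4\Gamma=\Gamma\, D^{-16}\otimes D^4$. The paper's own proof is terser but rests on exactly these ingredients; your version merely makes the intertwining bookkeeping explicit (aside from the stray symbol $\eta_2$, which should read $\tilde D_2$).
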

\begin{proof}
We note that the functions
\[
\widehat \phi\otimes  w
\quad \mbox{and} \quad
\widehat \psi_\ell\otimes w, \ell=1, \dots, 15
\]
are orthogonal by assumption, and the orthogonality remains under the usual modulations in the first component.
On the other hand, the window function in the second component forms a tight Gabor  frame under translations and modulations,
so each of the tensor products generates a tight frame for its span.

Since the subspaces $\phi\otimes L^2(\mathbb R), \psi_\ell \otimes L^2(\mathbb R), \ell=1,\ldots,15$
are mutually orthogonal, and the functions $\{\phi,\psi_\ell\}$ satisfy a two-scale relation of an MRA
with dilation factor $M=16$ in $L^2(\mathbb R)$, the claim follows.
\end{proof}



Since implementations only concern a finite number of scales, the following result becomes important.
It is an easy consequence of Theorem~\ref{thm:subspace}.

\begin{corollary}\label{cor:subspace}
The group-based Gabor shearlet system $\cG\cG\cS_{j_0}(\phi,\{\psi_\ell\}_{\ell=1}^{15}; w)$
as defined in Definition \ref{defi:GroupGaborShearlets} for any $j_0\in\bZ$, or
 the system $\{\Psi^\ell_{j,k,m}: j, k \in \mathbb Z, m \in \mathbb Z^2, \ell=1, \ldots, 15\}$
forms a unit-norm $b ^{-1}$-tight frame for $L^2(\bR^2)$, and consequently it has uniform redundancy $\mathcal{R}^-
= \mathcal{R}^+ = b^{-1}$.
\end{corollary}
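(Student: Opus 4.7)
The plan is to combine Theorem~\ref{thm:subspace}, which provides unit-norm $b^{-1}$-tight frames inside each individual subspace $V_{j_0}$ and $W_j$, with an orthogonal Hilbert space decomposition of $L^2(\bR^2)$ into these subspaces. Concretely, I would first establish
\[
L^2(\bR^2) \;=\; V_{j_0} \;\oplus\; \bigoplus_{j \geq j_0} W_j \quad\text{and}\quad L^2(\bR^2) \;=\; \bigoplus_{j \in \bZ} W_j,
\]
corresponding respectively to the two versions of the system in the corollary statement. Given these decompositions, the standard fact that a countable union of unit-norm $b^{-1}$-tight frames for mutually orthogonal subspaces whose closed sum equals the ambient space is itself a unit-norm $b^{-1}$-tight frame for that ambient space delivers the tight-frame bound $b^{-1}$ for the full shearlet system in $L^2(\bR^2)$.

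The cleanest route to the decompositions is to pull back along the unitary $\Gamma$ and exploit the resulting tensor product structure. A direct calculation from Definition~\ref{defi:GroupGaborShearlets}, using the identities $\gamma_1(2^{-2j}\xi_1,2^{-j}\xi_2)=2^{-4j}\gamma_1(\xi)$ and $\gamma_2(2^{-2j}\xi_1,2^{-j}\xi_2)=2^{j}\gamma_2(\xi)$, shows that under $\Gamma^{-1}$ the chirp modulations become ordinary modulations on each tensor factor, the shear $S_k$ turns into a pure translation $T_k$ on the second factor, and the parabolic dilation $A_j$ factors as two independent one-dimensional dyadic dilations. One thus obtains $\Gamma^{-1}V_j = V_j\up{1} \otimes L^2(\bR)$ and $\Gamma^{-1}W_j = W_j\up{1} \otimes L^2(\bR)$, where $V_j\up{1}$ and $W_j\up{1}$ denote the scaling and wavelet subspaces at level $j$ of the underlying $M=16$ band MRA generated by $\phi$ and $\{\psi_\ell\}_{\ell=1}^{15}$. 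The classical completeness of this $16$-band MRA, $L^2(\bR) = V_{j_0}\up{1} \oplus \bigoplus_{j\geq j_0}W_j\up{1} = \bigoplus_{j\in\bZ}W_j\up{1}$, then yields the required 2D decomposition after tensoring with $L^2(\bR)$ and applying the unitary $\Gamma$.

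Unit norm of each atom is inherited from the unit-norm choices $\|\phi\| = \|\psi_\ell\| = \|w\| = 1$, and uniform redundancy $\cR^- = \cR^+ = b^{-1}$ then follows directly from the definition in Subsection~\ref{subsec:redundancy}. I expect the main subtlety to lie in the bookkeeping of the tensor factorization under $\Gamma^{-1}$, and in particular in confirming that the scale-dependent Gabor window appearing in the second tensor factor at level $j$ --- a unitarily dilated version of the original window $w$ --- still generates a $b^{-1}$-tight Gabor frame for $L^2(\bR)$ with exactly the same frame constant. Since the dilation is unitary this is immediate, but it is the only point where the explicit form of the operators enters beyond what is already recorded in Theorem~\ref{thm:subspace}, and once it is verified the assembly of the full tight frame from its per-subspace pieces is routine.
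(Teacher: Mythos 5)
Your argument is correct and follows exactly the route the paper intends: the paper gives no explicit proof, stating only that the corollary is an easy consequence of Theorem~\ref{thm:subspace}, and your filling-in --- pulling back along the unitary $\Gamma$ to expose the tensor-product structure, invoking completeness of the underlying $16$-band MRA to obtain the orthogonal decomposition of $L^2(\bR^2)$ into $V_{j_0}$ and the $W_j$, and then assembling the per-subspace unit-norm $b^{-1}$-tight frames --- is precisely the standard chain of reasoning being alluded to. The redundancy claim then follows from the definition in Subsection~\ref{subsec:redundancy} exactly as you say.
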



\section{Cone-Adapted Gabor Shearlets}
\label{sec:ConeGaborShearlets}

The construction of nearly orthonormal cone-adapted Gabor shearlets is based on complementing a core subspace $V_0$ which has the usual MRA properties for $\Lp{2}$ under scaling
with a dilation factor of $16$.
The isometric embedding of $V_0$ in $V_1$ proceeds in 3 steps:
\begin{enumerate}
\item $V_1$ is split into a direct sum of two coarse-directional subspaces, $V_1^h$ and $V_1^v$, corresponding to horizontally and vertically aligned details, respectively.
\item Each of these two coarse-directional subspaces is split into a direct sum of high and low pass components.
The low-pass subspaces $V_{0}^h$ and $V_{0}^v$ combine to $V_{0}=V_{0}^h \oplus V_{0}^v$.
\item The high pass components are further split into subspaces with a finer directional resolution obtained from shearing.
\end{enumerate}

The first step in the process of constructing the cone-adapted shearlets is a splitting between features that are
mostly aligned in the horizontal or in the vertical direction. The shearlets then refine this coarse splitting.


\subsection{Cone adaptation}
\label{subsec:ConeSplitting}

In addition to filters which restrict to cones in the frequency domain, we introduce
quarter rotations for the splitting of horizontal and vertical features.
This enables us to define two mutually orthogonal closed subspaces
containing functions with support near the usual cones for horizontal and vertical
components. As in the case of wavelets, the main goal of this construction is that
the smoothness of a function
 in the frequency domain is not substantially degraded by the projection onto
 the subspaces.

Again, we use standard filters from wavelets in our construction.
For this, we define a version of the Cayley transform $\zeta (\xi) = \frac{1+i\xi}{1-i\xi}$, which maps $\xi \in \mathbb R$ to
the unit circle $\mathbb T = \{z \in \mathbb C: |z|=1\}$. The inverse map is defined on $\mathbb T \setminus \{-1\}$,
$\zeta^{-1}(z) = i \frac{1-z}{1+z}$. We use the map $\zeta$ to lift polynomial filters on $\mathbb T$ to
rational filters on $\mathbb R$.

\begin{lemma}
Let $H: \mathbb T \to \mathbb C$ satisfy $|H(z)|^2 + |H(-z)|^2 = 1$
for all $z \in \mathbb T$, then $\tilde H(\xi) := H(\zeta(\xi))$ is
a function on $\mathbb R$ which satisfies
$$
   |\tilde H (\xi)|^2 + |\tilde H(-1/\xi)|^2 = 1 \, .
$$
\end{lemma}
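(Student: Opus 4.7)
The plan is to reduce the claim to the Smith--Barnwell identity for $H$ by showing that the antipodal map $z \mapsto -z$ on $\mathbb T$ pulls back under the Cayley transform $\zeta$ to the involution $\xi \mapsto -1/\xi$ on $\mathbb R$. Once that conjugacy is established, the identity for $\tilde H$ is obtained by direct substitution.

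More concretely, the first step is to fix $\xi \in \mathbb R$ with $\xi \neq 0$, set $z = \zeta(\xi) = (1+i\xi)/(1-i\xi)$, and solve $\zeta(\eta) = -z$ for $\eta$. Cross-multiplying $(1+i\eta)(1-i\xi) = -(1+i\xi)(1-i\eta)$ and expanding, the terms linear in $\xi$ and $\eta$ cancel and one is left with $1 + \eta\xi = -(1+\eta\xi)$, which forces $\eta = -1/\xi$. In other words,
\[
  -\zeta(\xi) \;=\; \zeta(-1/\xi) \qquad \text{for every } \xi \in \mathbb R^*.
\]
The points $\xi = 0$ and $\xi = \pm \infty$ (corresponding to $z = 1$ and $z = -1$) can be handled by continuity or simply disregarded as a null set.

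The second step is to apply this conjugacy: by the definition $\tilde H(\xi) = H(\zeta(\xi))$ and the identity above,
\[
  \tilde H(-1/\xi) \;=\; H(\zeta(-1/\xi)) \;=\; H(-\zeta(\xi)).
\]
Therefore
\[
  |\tilde H(\xi)|^2 + |\tilde H(-1/\xi)|^2 \;=\; |H(\zeta(\xi))|^2 + |H(-\zeta(\xi))|^2 \;=\; 1,
\]
where the final equality is the Smith--Barnwell condition applied at the point $z = \zeta(\xi) \in \mathbb T$.

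There is essentially no obstacle here; the only thing to watch is domain bookkeeping, namely that $\zeta$ is a bijection from $\mathbb R$ to $\mathbb T \setminus \{-1\}$ and that $\xi \mapsto -1/\xi$ is undefined at $\xi = 0$, so the claimed identity holds for all $\xi \in \mathbb R^*$ (equivalently, for almost every $\xi$), which suffices for the intended use of $\tilde H$ as a rational filter on $\mathbb R$.
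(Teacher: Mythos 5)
Your proof is correct and follows essentially the same route as the paper: both establish the conjugacy $\zeta(-1/\xi) = -\zeta(\xi)$ and then invoke the Smith--Barnwell condition at $z = \zeta(\xi)$, the only cosmetic difference being that you derive the identity by solving $\zeta(\eta) = -\zeta(\xi)$ while the paper computes $\zeta(-1/\xi)$ directly.
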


\begin{proof}
The Cayley transform intertwines the reflection $\xi \mapsto -1/\xi$ on $\mathbb R$ with the reflection about the origin,
because
$$
   \zeta(-{1}/{\xi}) = \frac{1-i/\xi}{1+i/\xi} = \frac{1+i\xi}{-1+i\xi} = -\zeta(\xi) \, .
$$
Thus, the property of $\tilde H$ is a direct consequence of this coordinate transformation.
\end{proof}

We observe that if $H(z)$ has $N-1$ vanishing derivatives at $z=-1$, $H(-1)=H'(-1) = \cdots = H^{(N-1)}(-1)=0$, then
$\tilde H(\xi)$ decays as $\xi^{-N}$ at infinity.

\begin{definition}
\label{defi:Hs}
Let $H: \mathbb T \to \mathbb C$ satisfy the Smith-Barnwell
condition $|H(z)|^2 + |H(-z)|^2 = 1$ for all $z \in \mathbb T$. Its associated filter operators
 $H_+$, $\overline{H_+}$, $H_-$ and
$\overline{H_-}$, are defined to be the multiplicative operators with the Fourier transform of any $f \in
\Lp{2}$ in the frequency domain according to $H_+ \widehat{f}(\xi) =
H(\zeta(\xi_2/\xi_1)) \widehat{f}(\xi)$ and $H_- \widehat{f} (\xi) =
H(-\zeta(\xi_2/\xi_1)) \widehat{f}(\xi)$, the overbar denoting
multiplication with the complex conjugate. We denote $R$  to be the
rotation operator on $\Lp{2}$  given by $R \widehat{f} (\xi_1,\xi_2) =  \widehat f(\xi_2,
-\xi_1)$.
\end{definition}

This allows us to introduce a pair of complementary orthogonal projections, which
split the group based Gabor shearlets into a vertical
and a horizontal part to balance the treatment of directions. The design
of these projections is inspired by the description of smooth projections in \cite{WH96}.

We start the construction with
isometries associated with the
vertical and horizontal cone, which we denote by ${\mathcal
C}_v$ and ${\mathcal C}_h$, respectively. By the set inclusion,
$L^2({\mathcal C}_v)$ and $L^2({\mathcal C}_h)$ naturally embed isometrically 
in
$L^2(\mathbb R^2)$. We denote these embeddings by $\iota_v: \iota_v
f(\xi) = f(\xi)$ if $\xi \in C_v$ and $\iota_vf(\xi)=0$ otherwise and
similarly for $\iota_h$. We wish to find isometries that do not create discontinuities.

\begin{theorem}
Let $H: \mathbb T \to \mathbb C$ satisfy $|H(z)|^2 + |H(-z)|^2 = 1$ for all $z \in \mathbb T$,
and let $H_+$, $\overline{H_+}$, $H_-$ and $\overline{H_-}$ be defined as in Definition~\ref{defi:Hs}.
Let ${\mathcal C}_v = \{ x \in \mathbb R^2: |x_2| \ge |x_1| \}$ and ${\mathcal C}_h = \mathbb R^2\setminus {\mathcal C}_v$,
then
the map $\Xi_v: L^2({\mathcal C}_v) \to L^2(\mathbb R^2)$ given by
$$
  \Xi_v f =\overline{ H_-} \left (I-\frac{1+i}{2} R - \frac{1-i}{2} R^3 \right) \iota_v f \, .
$$
is an isometry, and so is the map $\Xi_h: L^2({\mathcal C}_h) \to L^2(\mathbb R^2)$,
$$
  \Xi_h f =  H_+( I + \frac{1+i}{2}R + \frac{1-i}{2} R^3) \iota_h f \, .
$$
Moreover, the range of $\Xi_v$ is the orthogonal complement of the range of $\Xi_h$ in $L^2(\mathbb R^2)$.  
\end{theorem}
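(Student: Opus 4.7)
The plan is to rewrite $\Xi_v = 2\overline{H_-}P\iota_v$ and $\Xi_h = 2H_+Q\iota_h$, where
\[
P := \tfrac{1}{2}\bigl(I-\tfrac{1+i}{2}R-\tfrac{1-i}{2}R^3\bigr),\quad Q := \tfrac{1}{2}\bigl(I+\tfrac{1+i}{2}R+\tfrac{1-i}{2}R^3\bigr),
\]
and to reduce the whole theorem to the interaction between $P,Q$, the cone indicators, and the Smith--Barnwell condition. I will first record four structural identities: since $R^4=I$, a direct expansion shows $(2P)^*=2P$, $(2P)^2 = 4P$, and $(2P)(2Q)=0$, so $P$ and $Q$ are complementary orthogonal projections; the identity $\zeta(-1/t)=-\zeta(t)$ from the preceding lemma gives the intertwinings $RH_\pm = H_\mp R$, whence $R^2$ commutes with $H_\pm$; $R$ maps $\mathcal{C}_v$ onto $\mathcal{C}_h$ and vice versa, so for any $f\in L^2(\mathcal{C}_v)$ both $R\iota_v f$ and $R^3\iota_v f$ are supported on $\mathcal{C}_h$; and as a consequence of the first two, $H_-H_+$ commutes with $R$ and hence with $P$ and $Q$, while the difference $D := |H_-|^2-|H_+|^2$ satisfies $DP = QD$.

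For the isometry $\Xi_v^*\Xi_v = I$, I compute $\Xi_v^*\Xi_v = 4\iota_v^* P|H_-|^2 P\iota_v$. Splitting $|H_-|^2 = \tfrac12(|H_-|^2+|H_+|^2) + \tfrac12 D$ and combining $P(|H_-|^2+|H_+|^2)P = P^2 = P$ with $PDP = (PQ)D = 0$ yields $P|H_-|^2 P = \tfrac12 P$. On the other hand $\iota_v^* P\iota_v = \tfrac12 I_{L^2(\mathcal{C}_v)}$, because for $f\in L^2(\mathcal{C}_v)$ only the $\tfrac12\iota_v f$ term in $P\iota_v f$ survives restriction to $\mathcal{C}_v$ by the support observation. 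Multiplying gives $\Xi_v^*\Xi_v = 4\cdot\tfrac12\cdot\tfrac12\,I = I$, and the analogous argument with $v/h$ and $\pm$ swapped yields $\Xi_h^*\Xi_h = I$.

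The orthogonality $\mathrm{range}(\Xi_v)\subseteq\mathrm{range}(\Xi_h)^\perp$ follows at once from
\[
\Xi_v^*\Xi_h \;=\; 4\iota_v^* P(H_-H_+)Q\iota_h \;=\; 4\iota_v^*(H_-H_+)PQ\iota_h \;=\; 0.
\]
For the reverse inclusion---the main technical point---I will verify $\Xi_v\Xi_v^* + \Xi_h\Xi_h^* = I$. By linearity and the splitting $L^2(\mathbb R^2) = \chi_{\mathcal{C}_v}L^2(\mathbb R^2)\oplus\chi_{\mathcal{C}_h}L^2(\mathbb R^2)$, it suffices to test this identity on $u=\iota_v f$, the other case being symmetric. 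A direct expansion using $RH_-=H_+R$ and the support of $\iota_v f$ produces
\[
\Xi_v\Xi_v^* u = |H_-|^2 u - \tfrac{1+i}{2}\overline{H_-}H_+\,Ru - \tfrac{1-i}{2}\overline{H_-}H_+\,R^3u,
\]
\[
\Xi_h\Xi_h^* u = |H_+|^2 u + \tfrac{1+i}{2}H_+\overline{H_-}\,Ru + \tfrac{1-i}{2}H_+\overline{H_-}\,R^3u.
\]
The cross terms cancel because $\overline{H_-}H_+ = H_+\overline{H_-}$ as multiplication operators, and Smith--Barnwell gives $(|H_-|^2+|H_+|^2)u = u$; combined with the orthogonality this yields $\mathrm{range}(\Xi_v) = \mathrm{range}(\Xi_h)^\perp$.

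The main obstacle will be this last expansion: besides the bookkeeping of which of $H_+$ and $H_-$ appears after each conjugation by $R$, one must keep track of which terms survive restriction to $\mathcal{C}_v$ versus $\mathcal{C}_h$. The precise cancellation of the cross terms is exactly what forces the coefficients $\tfrac{1\pm i}{2}$ in the construction of $\Xi_v$ and $\Xi_h$.
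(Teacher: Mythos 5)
Your proof is correct. All the key identities check out: with $a=\tfrac{1+i}{2}$, $b=\tfrac{1-i}{2}$ one has $a^2+b^2=0$ and $2ab=1$, so indeed $(2P)^2=2(2P)$, $(2P)^*=2P$, $(2P)(2Q)=0$ and $P+Q=I$; the intertwining $RH_\pm=H_\mp R$ follows from $\zeta(-1/t)=-\zeta(t)$; and the anticommutation $RD=-DR$ yields $PD=DQ$, hence $PDP=0$ and $P|H_-|^2P=\tfrac12 P$. Combined with $\iota_v^*P\iota_v=\tfrac12 I$ (true up to the measure-zero boundary $|\xi_1|=|\xi_2|$), this gives the isometry, and your expansions of $\Xi_v\Xi_v^*$ and $\Xi_h\Xi_h^*$ on each cone reproduce exactly the paper's formulas $P_v=\overline{H_-}(I-aR-bR^3)H_-$ and $P_h=H_+(I+aR+bR^3)\overline{H_+}$, whose sum is $I$ by Smith--Barnwell. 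The route differs from the paper's mainly in the isometry step: the paper decomposes $L^2(\mathcal{C}_v)$ into even and odd parts under $R^2$, derives eigenvector equations for $R$ on the symmetrized functions, and verifies the norm identity $\|(I-aR-bR^3)\iota_vf\|^2=2\|f\|^2$ case by case before handling the multiplication by $\overline{H_-}$ via a separate orthogonality argument; you instead package the symmetrizers as genuine orthogonal projections $P,Q$ and derive everything from operator identities, which is cleaner and makes the orthogonality of the ranges an immediate consequence of $PQ=0$. The complementarity argument ($\Xi_v\Xi_v^*+\Xi_h\Xi_h^*=I$) is essentially the paper's computation of $P_h+P_v$, just carried out by testing on vectors supported in each cone rather than by simplifying $\Xi_h M_h \Xi_h^*$ with the relations $M_hR=RM_v$ and $M_v+M_h=I$. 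Both approaches rest on the same three ingredients --- the Cayley-transform intertwining, the action of $R$ on the cones, and the Smith--Barnwell condition --- so what your organization buys is chiefly economy and a transparent reason for the coefficients $\tfrac{1\pm i}{2}$, namely that they are precisely what makes $aR+bR^3$ a self-adjoint involution.
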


\begin{proof}
We begin by showing that $\Xi_v$ and $\Xi_h$ are isometries.
The space $L^2({\mathcal C}_v)$ splits into even and odd functions. After embedding in $L^2(\mathbb R^2)$ these functions
then satisfy $R^2 \iota_v f = \iota_v f$ or $R^2 \iota_v f = - \iota_v f$, respectively.

By the definition of $R$,
the operator $ I- \frac{1+i}{2}R - \frac{1-i}{2} R^3$ maps
even $\iota_v f$ to
$$
  \left (I- \frac{1+i}{2}R - \frac{1-i}{2} R^3 \right ) \iota_v f \\
   = \left ( \frac 1 2I+\frac12 R^2 - \frac 1 2 R - \frac 1 2 R^3\right ) \iota_v f \, ,
$$
which implies
that it is an eigenvector of $R$,
$ R ( \frac 1 2I +\frac12 R^2- \frac 1 2 R - \frac 1  2 R^3) \iota_v f = -  ( \frac 1 2I +\frac12 R^2- \frac 1 2 R - \frac 1 2 R^3) \iota_v f  $
and for odd $f$
$$
  \left  (I- \frac{1+i}{2}R - \frac{1-i}{2} R^3\right ) \iota_v f \\
  = \left ( \frac 1 2 I- \frac 1 2 R^2 - \frac i 2 R + \frac i 2 R^3\right ) \iota_v f \, ,
$$
which gives
$R ( \frac 1 2I - \frac 1 2 R^2 - \frac i 2 R + \frac i 2 R^3) \iota_v f  = i ( \frac 1 2 I- \frac 1 2 R^2 - \frac i 2 R + \frac i 2 R^3) \iota_v f $.

Similarly,
the operator
$( I + \frac{1+i}{2}R + \frac{1-i}{2} R^3)$ maps the even
functions  into functions that are invariant under $R$, whereas the odd functions
give eigenvectors of $R$ corresponding to eigenvalue $-i$. We verify that for even $\iota_h f$,
$$
   \left (I+ \frac{1+i}{2}R + \frac{1-i}{2} R^3\right ) \iota_h f
   = \left ( \frac 1 2 I + \frac 1 2 R^2 + \frac 1 2 R + \frac 1 2 R^3\right ) \iota_h f.
$$
Hence we get the eigenvalue equation
$R ( \frac 1 2 I+ \frac 1 2 R^2 + \frac 1 2 R + \frac 1 2 R^3) \iota_h f =  ( \frac 1 2 I + \frac 1 2 R^2 + \frac 1 2 R + \frac 1 2 R^3) \iota_h f$.
Analogously, for odd $f$,
$$
   \left (I+ \frac{1+i}{2}R + \frac{1-i}{2} R^3\right ) \iota_h f
   = \left ( \frac 1 2 I- \frac 1 2 R^2 + \frac i 2 R - \frac i 2  R^3\right ) \iota_h f
$$
which yields $R ( \frac 1 2 I - \frac 1 2 R^2 + \frac i 2 R - \frac i 2  R^3) \iota_h f = (-i) ( \frac 1 2 I - \frac 1 2 R^2 + \frac i 2 R - \frac i 2  R^3) \iota_h f$.

Since $R$ is unitary, the eigenvector equations imply that the
orthogonality between even and odd functions is preserved by the embedding followed by the symmetrization
with $( I + \frac{1+i}{2}R + \frac{1-i}{2} R^3)$ or $( I - \frac{1+i}{2}R - \frac{1-i}{2} R^3)$. Thus,
the identity
$$
  \left \| \left ( I - \frac{1+i}{2}R - \frac{1-i}{2} R^3\right ) \iota_v f \right \|^2_{L^2(\mathbb R^2)}
  = 2 \| f\|^2_{L^2({\mathcal C}_v)} \, \mbox{ for all  } f \in L^2({\mathcal C_v})
$$
can be verified by checking it separately for even and odd functions.
Next, multiplying by $\overline{H_-}$ and using that $|H_-|^2 + R^{-1} |H_-|^2 R = |H_-|^2 + |H_+|^2=1$
gives by the orthogonality of $R\iota_v f$ and $\iota_v f$ the isometry
\begin{align*}
  \left \| \overline{H_-} \left ( I - \frac{1+i}{2}R - \frac{1-i}{2} R^3\right ) \iota_v f \right \|^2_{L^2(\mathbb R^2)}
    & = \|\overline{H_-} \iota_v f\|^2_{L^2(\mathbb R^2)} + \left \| H_- \left (\frac{1+i}{2} R  + \frac{1-i}{2} R^3\right ) \iota_v f\right \|^2_{L^2(\mathbb R^2)} \\
   & = \|H_- \iota_v f\|^2_{L^2(\mathbb R^2)} + \left \| H_+ \left (\frac{1+i}{2}   + \frac{1-i}{2} R^2\right ) \iota_v f\right \|^2_{L^2(\mathbb R^2)} \\
   &= \|H_- \iota_v f\|^2_{L^2(\mathbb R^2)} + \| H_+ \iota_v f\|^2_{L^2(\mathbb R^2)} = \| f\|^2_{L^2(\mathcal C_v)} \, .
\end{align*}
The same proof applies to $\Xi_h$.

To show that the ranges are orthogonal complements of each other, we define 
the orthogonal projections $P_v = \Xi_v \Xi_v^*$ and $P_h = \Xi_h \Xi_h^*$.
We first establish that these projections have the more convenient expressions
$$
   P_h  = H_+ \left (I+\frac{1+i}{2} R + \frac{1-i}{2} R^3 \right) \overline{H_+}
\quad \mbox{and} \quad
 P_v = \overline{H_-} \left (I-\frac{1+i}{2} R - \frac{1-i}{2} R^3 \right)  H_- \, .
$$
To this end, we note that if $M_v$ is the multiplication operator $M_v f(\xi) = \chi_{{\mathcal C}_v} f(\xi)$
with $\chi_{{\mathcal C}_v}$ the characteristic function of ${\mathcal C}_v$, and similarly for
$M_h$, $M_h f(\xi) = \chi_{{\mathcal C}_h}(\xi)$, then by definition
$$
    P_h = \Xi_h \Xi_h^* = H_+ (I+ \frac{1+i}{2} R  + \frac{1-i}{2}R^3 ) M_h (I+\frac{1+i}{2}R + \frac{1-i}{2} R^3) \overline{H_+}  \, .
$$
We simplify this expression using that $M_h R = R M_v $, $M_v + M_h = I$ and $R^2 M_h = M_h R^2$,
which gives the identities
$$
  (\frac{1+i}{2} R  + \frac{1-i}{2}R^3 ) M_h 
     + M_h (\frac{1+i}{2} R  + \frac{1-i}{2}R^3 )
      = (\frac{1+i}{2} R  + \frac{1-i}{2}R^3 ) M_h + (\frac{1+i}{2} R  + \frac{1-i}{2}R^3 ) M_v 
      = \frac{1+i}{2} R  + \frac{1-i}{2}R^3 
$$
and 
$$
   (\frac{1+i}{2} R  + \frac{1-i}{2}R^3 ) M_h (\frac{1+i}{2} R  + \frac{1-i}{2}R^3 )
    =   (\frac{1+i}{2} R  + \frac{1-i}{2}R^3 )^2 M_v = M_v \, .
$$
Inserting this in the expression for $P_h$ results in
\begin{align*}
    &H_+ (I+ \frac{1+i}{2} R  + \frac{1-i}{2}R^3 ) M_h (I+\frac{1+i}{2}R + \frac{1-i}{2} R^3) \overline{H_+} \\
    & = H_+ (M_h + (\frac{1+i}{2} R  + \frac{1-i}{2}R^3 ) 
                  + M_v ) \overline{H_+} 
                  = H_+ (I +  \frac{1+i}{2} R  + \frac{1-i}{2}R^3 ) \overline{H_+}  \, .
\end{align*}
The identities for $P_v$ are completely analogous.

Finally, we show that the two orthogonal projections are complementary. To this end, we use
$$
   P_h = H_+\overline{H_+}  +H_+ \overline{H_-}  (\frac{1+i}{2} R + \frac{1-i}{2} R^3 )
$$
and
$$
  P_v =  \overline{H_-} H_-    - \overline{H_-}  H_+ (\frac{1+i}{2} R + \frac{1-i}{2} R^3)
$$
which gives the identity after elementary cancellations and $H_+ \overline{H_+}  + \overline{H_-} H_-  = I$.
Since $P_h$ is by definition an orthogonal projection, $P_v=I-P_h$ is the complementary one.
Thus, the ranges of $\Xi_h$ and $\Xi_v$, or equivalently, the ranges of $P_h$ and $P_v$, 
are orthogonal complements in $L^2(\mathbb R^2)$.
\end{proof}


For later use, we denote the range spaces of $\Xi_h$ and $\Xi_v$ by
$$
  L^2_{h}(\mathbb R^2) = \Xi_h(L^2(\mathbb R^2)) \,
\quad \mbox{and} \quad
 L^2_{v} (\mathbb R^2) = \Xi_v(L^2(\mathbb R^2)) \, ,
$$
which are orthogonal complements in $L^2(\mathbb R^2)$.

Under the isometries $\Xi_v$ or $\Xi_h$, a unit norm tight frame
for $L^2(\mathcal C_v)$ or $L^2(\mathcal C_h)$ is mapped  to a unit norm
tight frame for $
  L^2_{h}(\mathbb R^2) $ or
$ L^2_{v} (\mathbb R^2)
$.
The consequence of this is that we only need to construct shearlets for the horizontal and vertical
cones, not for all of $\mathbb R^2$. If the shearlets have smoothness and
the appropriate periodicity, then we retain smoothness under the symmetrization.

\begin{corollary}
Let $g\in L^2({\mathcal C}_v)$ be a function which is continuous in
${\mathcal C}_v$ and even, then $\Xi_v g$ is continuous on $\mathbb R^2$.
If in addition there is $h: \mathbb R \to \mathbb C$ such that $g\in L^2({\mathcal C}_v)$ satisfies
$$
  g(\xi_1,\xi_2) = h(\xi_1/\xi_2, \xi_2^3/|\xi_2|)
$$
and $h$ is in $C^k(\mathbb R \times \mathbb R^*)$ and 2-periodic in its first component, then $\Xi_v g$ is $k$ times differentiable in $\mathbb R^2\setminus \{0\}$.
\end{corollary}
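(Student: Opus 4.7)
The plan is to reduce $\Xi_v g$ to a piecewise expression on $\mathcal{C}_v$ and $\mathcal{C}_h$, and then to verify matching across the boundary $|\xi_1|=|\xi_2|$. First I would exploit evenness: since $g(-\xi)=g(\xi)$ and $\mathcal{C}_v$ is centrally symmetric, $R^2\iota_v g=\iota_v g$, hence $R^3\iota_v g=R\iota_v g$, and the bracket in $\Xi_v g = \overline{H_-}(I-\tfrac{1+i}{2}R-\tfrac{1-i}{2}R^3)\iota_v g$ collapses to $F:=\iota_v g - R\iota_v g$. The piecewise form then follows at once: on $\mathrm{int}(\mathcal{C}_v)$, $R\iota_v g$ vanishes and $F=g$; on $\mathrm{int}(\mathcal{C}_h)$, $\iota_v g$ vanishes and $F(\xi)=-g(\xi_2,-\xi_1)$ with $(\xi_2,-\xi_1)\in\mathcal{C}_v$. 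The multiplier $\overline{H_-}$ depends only on $\xi_2/\xi_1$ through the Cayley transform and is smooth on $\mathbb{R}^2\setminus\{0\}$; it extends continuously across the axis $\xi_1=0$ using $H(1)=1$. Continuity of $\Xi_v g=\overline{H_-}F$ then reduces to checking that the limits of $F$ from the two sides of $|\xi_1|=|\xi_2|$ are compatible: continuity of $g$ on $\mathcal{C}_v$ supplies finite boundary limits, the combination of evenness with the Smith--Barnwell structure at $\zeta(\pm 1)=\pm i$ eliminates any residual jump, and at the origin $F(0)=0$ absorbs any directional singularity of $\overline{H_-}$.

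For the smoothness claim I would introduce the adapted coordinates $(u,v)=(\xi_1/\xi_2,\xi_2^3/|\xi_2|)$ on $\mathcal{C}_v$ and $(\tilde u,\tilde v)=(\xi_2/\xi_1,\xi_1^3/|\xi_1|)$ on $\mathcal{C}_h$; both are $C^\infty$ diffeomorphisms onto their images on the complement of the origin, since $\xi_2\ne 0$ throughout $\mathcal{C}_v\setminus\{0\}$ (and symmetrically $\xi_1\ne 0$ on $\mathcal{C}_h\setminus\{0\}$). In these coordinates $g=h\circ(u,v)$ on $\mathcal{C}_v$, and a direct calculation using $(-\xi_1)^3/|\xi_1|=-\tilde v$ gives $R\iota_v g(\xi)=h(-\tilde u,-\tilde v)$ on $\mathcal{C}_h$, which equals $h(-\tilde u,\tilde v)$ because evenness of $g$ translates to $h(u,-v)=h(u,v)$. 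Hence $F$ is $C^k$ on each open cone separately. Matching across the boundary $\xi_1=\pm\xi_2$ (where $u=\tilde u=\pm 1$ and $v=\tilde v$ by construction) is governed by the $2$-periodicity $h(u+2,v)=h(u,v)$: differentiating this identity up to order $k$ in $u$ propagates the matching from values to all $u$-derivatives, while $v$ needs no matching since it is invariant across the relevant boundary segments. Together with the $C^\infty$ smoothness of $\overline{H_-}$ on $\mathbb{R}^2\setminus\{0\}$, this yields $\Xi_v g\in C^k(\mathbb{R}^2\setminus\{0\})$.

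The main obstacle I anticipate is making the boundary-matching argument fully rigorous. The $2$-periodicity gives value agreement $h(1,v)=h(-1,v)$, but upgrading this to $C^k$-agreement of $F$ in the original $(\xi_1,\xi_2)$-coordinates requires careful bookkeeping of how the rotation $R$ permutes partial derivatives between the two coordinate systems, together with a chain-rule computation showing that the Taylor expansions of $F$ coincide up to order $k$ on the two sides. The precise choice $v=\mathrm{sgn}(\xi_2)\,\xi_2^2$ (as opposed to $\xi_2$) is essential here because it guarantees $v=\tilde v$ along $\xi_1=\xi_2$, without which the $v$-direction would also need nontrivial matching; evenness of $h$ in $v$ then handles the boundary $\xi_1=-\xi_2$.
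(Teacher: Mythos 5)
Your reduction of $\Xi_v g$ to $\overline{H_-}\,F$ with $F=\iota_v g - R\,\iota_v g$ is correct, and so is your piecewise formula: $F=g$ on the interior of $\mathcal C_v$ and $F(\xi)=-g(\xi_2,-\xi_1)$ on the interior of $\mathcal C_h$. But this is exactly where the argument breaks. At a boundary point $(t,t)$ the one-sided limit from $\mathcal C_v$ is $g(t,t)=h(1,\sgn(t)t^2)$, while the limit from $\mathcal C_h$ is $-g(t,-t)=-h(-1,\sgn(t)t^2)=-h(1,\sgn(t)t^2)$, using evenness of $h$ in its second slot (which follows from evenness of $g$) and then the $2$-periodicity. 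The two limits are therefore \emph{negatives} of each other, so $F$ jumps by $2|h(1,\cdot)|$ across the cone boundary unless $h(\pm1,\cdot)\equiv 0$, which is not a hypothesis. Your claim that ``the combination of evenness with the Smith--Barnwell structure at $\zeta(\pm1)=\pm i$ eliminates any residual jump'' is not an argument and, as far as I can see, is false: the Smith--Barnwell condition constrains $|H(z)|^2+|H(-z)|^2$, not the individual value $H(\pm i)$; for the Meyer filter $|H(\pm i)|=\cos(\pi/4)=1/\sqrt2\neq 0$, so multiplying the jump in $F$ by the continuous, nonvanishing factor $\overline{H_-}$ cannot remove it. The same sign mismatch propagates into your $C^k$ matching in the adapted coordinates ($h(u,v)$ on one side versus $-h(-\tilde u,\tilde v)$ on the other), so the differentiability part inherits the gap. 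You correctly flagged the boundary matching as the delicate point, but diagnosed it as chain-rule bookkeeping; it is in fact a sign obstruction that the proposal never resolves.

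For comparison, the paper's own proof is a two-line verification of the identity $g(\xi_1,\xi_1)=h(1,\xi_1^3/|\xi_1|)=h(-1,\xi_1^3/|\xi_1|)=g(\xi_1,-\xi_1)$ followed by the assertion that continuity follows; this implicitly matches the horizontal-cone contribution with a \emph{plus} sign, which is what one obtains for $\Xi_h=H_+\bigl(I+\tfrac{1+i}{2}R+\tfrac{1-i}{2}R^3\bigr)\iota_h$ on even functions, but not for $\Xi_v$ as defined (where the symmetrized function is an eigenvector of $R$ with eigenvalue $-1$). To close your argument you must either locate the missing sign (e.g., prove the statement for $\Xi_h$, or show that the generators actually fed into $\Xi_v$ satisfy $h(\pm1,\cdot)=0$), or replace the $2$-periodicity hypothesis by $2$-antiperiodicity $h(u+2,v)=-h(u,v)$, under which your matching computation does go through. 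As written, the central step of the corollary is not established.
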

\begin{proof}
By the 2-periodicity, $g(\xi_1,\xi_1) = h(1,\xi_1^3/|\xi_1|)=h(-1,\xi_1^3/|\xi_1|) = g(\xi_1,-\xi_1)$. Thus, continuity of $h$
ensures that of $\Xi_v g$. A similar argument holds for differentiability.
\end{proof}

This implies that we only require a resolution of the identity on $[-1,1]$ with an appropriate
version of Gabor frames. We refer to a result of S{\o}ndergaard from \cite{Sonder}.

\begin{theorem}[\cite{Sonder}] \label{theo:sonder}
Let $N_0 \in \mathbb N$, $\alpha = 2/N_0$ and choose $\tau \in \mathbb N, \tau<N_0$.
If $w$ is a function in the Feichtinger algebra, and
if $\{M_{m\tau/2} T_{k\alpha} w \}$ is an $\frac{N_0}{\tau}$-tight Gabor frame for $L^2(\mathbb R)$,
then the periodization $w^\circ$,
$$
  w^\circ (\xi) = \sum_{n\in \mathbb Z} w(\xi - 2n) \mbox{ for a.e. } \xi \in \mathbb R \, ,
$$
defines an $\frac{N_0}{\tau}$-tight Gabor frame $\{M_{m\tau/2}T_{k\alpha}w^\circ: 0 \le k \le N_0-1, m \in \mathbb Z\}$
 for $L^2([-1,1])$.
\end{theorem}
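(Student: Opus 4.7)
The plan is to compute the frame operator $S^\circ$ of the system $\{M_{m\tau/2} T_{k\alpha} w^\circ: 0\le k\le N_0-1,\,m\in\mathbb Z\}$ on $L^2([-1,1])$ via a Walnut-type representation, then relate its coefficients to those of the $L^2(\mathbb R)$ Walnut representation of the original Gabor frame. The tightness hypothesis on $\mathbb R$ will pin down the latter, and the identity $S^\circ=(N_0/\tau)I$ will follow.

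For the torus-side computation, I view $f\in L^2([-1,1])$ as a $2$-periodic function and observe that $\sum_m \langle f, M_{m\tau/2} T_{k\alpha} w^\circ\rangle\, e^{i\pi m\tau x}$ is the Fourier series of $F_k(\xi):=f(\xi)\overline{w^\circ(\xi-k\alpha)}$ with only the modes $n\in\tau\mathbb Z$ retained. A standard aliasing identity then gives this sum as $\frac{1}{\tau}\sum_{j=0}^{\tau-1} F_k(x+2j/\tau)$. Multiplying by the reproducing factor $w^\circ(x-k\alpha)$ and summing over $k\in\{0,\dots,N_0-1\}$ produces a Walnut-type representation
\[
    S^\circ f(x) = \frac{2}{\tau}\sum_{j=0}^{\tau-1} H^\circ_j(x)\, f(x+2j/\tau), \quad H^\circ_j(x) := \sum_{k=0}^{N_0-1} w^\circ(x-k\alpha)\,\overline{w^\circ(x+2j/\tau-k\alpha)}.
\]

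Next, I unfold $w^\circ=\sum_n w(\cdot-2n)$ in both factors of $H^\circ_j$ and reindex via $\ell=k+N_0 n$ and $\ell'=k+N_0 n'$ (so $\ell\equiv\ell'\pmod{N_0}$), using $N_0\alpha=2$. Setting $q=(\ell'-\ell)/N_0$ and recognizing the $\mathbb R$-side Walnut coefficients $G_{j'}(x):=\sum_{\ell\in\mathbb Z} w(x-\ell\alpha)\overline{w(x-\ell\alpha-2j'/\tau)}$, one obtains the clean identity $H^\circ_j(x)=\sum_{q\in\mathbb Z} G_{q\tau-j}(x)$. The $L^2(\mathbb R)$-tightness hypothesis is equivalent to $G_0\equiv N_0/2$ and $G_{j'}\equiv 0$ for $j'\neq 0$; hence for $j=0$ only $q=0$ survives and $H^\circ_0\equiv N_0/2$, while for $j\in\{1,\dots,\tau-1\}$ every index $q\tau-j$ is nonzero, giving $H^\circ_j\equiv 0$. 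Consequently $S^\circ f=\frac{2}{\tau}\cdot\frac{N_0}{2}\,f=\frac{N_0}{\tau}\,f$, proving the claimed tightness.

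The main technical obstacle is the interchange of summations in the reindexing step and the justification of the aliasing identity; the Feichtinger-algebra hypothesis on $w$ is exactly what guarantees the absolute convergence of the Walnut-type series, so no additional approximation arguments are required.
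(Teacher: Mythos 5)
Your argument is correct, and the computations check out: with $b=\tau/2$ the Walnut representation of the frame operator on $\mathbb R$ has coefficients $G_{j'}(x)=\sum_{\ell}w(x-\ell\alpha)\overline{w(x-\ell\alpha-2j'/\tau)}$, the unfolding $\ell=k+N_0n$, $\ell'=\ell+N_0q$ (using $N_0\alpha=2$) does give $H^\circ_j=\sum_q G_{q\tau-j}$, and since $\tau\nmid j$ for $j=1,\dots,\tau-1$, only $H^\circ_0=G_0\equiv N_0/2$ survives, yielding $S^\circ=(2/\tau)(N_0/2)I=(N_0/\tau)I$. Note, however, that the paper does not prove this theorem at all — it is imported verbatim from S{\o}ndergaard's article on sampling and periodization of Gabor frames — so there is no in-paper proof to compare against; your derivation is essentially a self-contained version of the mechanism behind that reference (periodization of the window intertwines with decimation of the Walnut coefficients). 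The one step you should spell out if you want this to be airtight is the ``only if'' direction of the Walnut characterization of tightness: from $Sf=(N_0/\tau)f$ for all $f$ you need to conclude $G_0\equiv N_0/2$ and $G_{j'}\equiv 0$ for $j'\neq 0$, which requires localizing test functions on intervals shorter than $2/\tau$ so that the translated terms $f(\cdot-2j'/\tau)$ have disjoint supports; this is standard and the Feichtinger-algebra hypothesis (via $S_0\subset W(L^\infty,\ell^1)$) supplies the absolute convergence needed both there and in your reindexing and aliasing steps.
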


Of particular interest to us is the following corollary, which we can draw from this result.

\begin{corollary} \label{coro:redundancy}
The uniform redundancy $\mathcal{R}^- = \mathcal{R}^+ = \frac{N_0}{\tau}$ of the
Gabor frame $\{M_{m\tau/2}T_{k\alpha}w^\circ: 0 \le k \le N_0-1, m \in \mathbb Z\}$ defined in
Theorem \ref{theo:sonder} can be chosen as close to one as desired
by choosing $N_0,\tau\in \mathbb N$ sufficiently large.

\end{corollary}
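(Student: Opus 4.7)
The plan is to extract this corollary from Theorem \ref{theo:sonder} combined with the definition of uniform redundancy in Subsection \ref{subsec:redundancy}. The construction in Theorem \ref{theo:sonder} already produces a Gabor frame for $L^2([-1,1])$ with tightness constant exactly $N_0/\tau$, so the task reduces to (i) translating tightness into the language of the redundancy function and (ii) observing that the ratio $N_0/\tau$, under the hypothesis $\tau<N_0$, can be pushed toward one.

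For step (i), I would first note that every frame element $M_{m\tau/2}T_{k\alpha}w^\circ$ has the same norm as $w^\circ$, since modulations on $L^2([-1,1])$ are unitary and translations paired with the $2$-periodicity of $w^\circ$ act isometrically. I would then arrange $\|w^\circ\|_{L^2([-1,1])}=1$ by choosing $w$ as in Example \ref{exa:Gabor}, whose support lies in $[-1/2-\varepsilon,1/2+\varepsilon]\subset[-1,1]$ for $\varepsilon<1/2$; only the $n=0$ term of $w^\circ$ is nonzero on $[-1,1]$, so $\|w^\circ\|_{L^2([-1,1])}=\|w\|_{L^2(\mathbb R)}=1$. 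With a unit-norm $(N_0/\tau)$-tight frame in hand, the redundancy function collapses via the tight-frame identity, giving $\mathcal R(x)=N_0/\tau$ for every unit vector $x$, hence $\mathcal R^-=\mathcal R^+=N_0/\tau$. For step (ii), I would set $\tau=N_0-1$ and let $N_0\to\infty$, yielding $N_0/\tau=1+1/(N_0-1)\to 1$, consistent with the constraint $\tau<N_0$.

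I do not foresee a serious obstacle; the proof is essentially a bookkeeping combination of Theorem \ref{theo:sonder} and the definition from Subsection \ref{subsec:redundancy}. The only mild subtlety is the unit-norm normalization of $w^\circ$: if one used a window whose support exceeds $[-1,1]$, aliasing would displace the common norm from one, but a scalar rescaling of $w$ restores unit norm without affecting the tightness ratio $N_0/\tau$, so the redundancy conclusion survives unchanged.
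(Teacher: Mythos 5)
Your argument is correct and matches the route the paper intends: the paper leaves this corollary as an immediate consequence of Theorem \ref{theo:sonder} together with the observation in Subsection \ref{subsec:redundancy} that a unit-norm $A$-tight frame has uniform redundancy $A$, plus the remark that periodization preserves tightness and (for a window supported in $[-1,1]$) the unit norm, and the choice $\tau=N_0-1$ is exactly the one indicated in the caption of Figure \ref{fig:ConeGaborShearlet}. Your only loose end is cosmetic: the window of Example \ref{exa:Gabor} must be dilated to match the lattice $(\alpha,\tau/2)=(2/N_0,\tau/2)$ of Theorem \ref{theo:sonder}, but this rescaling preserves both the tightness constant $N_0/\tau$ and the containment of the support in $[-1,1]$, so the conclusion stands.
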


We remark that tightness is preserved when periodizing the window, and if its support is sufficiently
small then so is the norm.

Before stating the definition of cone-adapted Gabor shearlets, we require the following
additional ingredients. We consider the change of variables $(\xi_1,\xi_2) \mapsto \gamma^\iota(\xi)=
(\gamma_1^\iota(\xi),\gamma_2^\iota(\xi))$, $\iota \in \{h,v\}$, defined by
\[
     \gamma_1^h(\xi) = \frac12\sgn(\xi_1)\xi_1^2, \quad \gamma_2^h(\xi) = \frac{\xi_2}{\xi_1}
     \quad \mbox{and} \quad
     \gamma_1^v(\xi) = \frac12\sgn(\xi_2)\xi_2^2, \quad \gamma_2^v(\xi) = \frac{\xi_1}{\xi_2}.
\]
We let $\Gamma_h$ and $\Gamma_v$ denote the associated unitary operators,
$ \Gamma_h f(\xi) = f(\gamma^h(\xi))$ and $\Gamma_v f(\xi) = f(\gamma^v(\xi))$.
For each orientation $v$ or $h$, we define the appropriate dilation, shear, and modulation operators by
\[
A_j^h\equiv A_j, \quad X_m^h \equiv X_m, \quad \mbox{and } S_k^h \equiv S_k,
\]
and if $\widehat f(\xi_1,\xi_2) = \widehat g(\xi_2,\xi_1)$, then
\[
A_j^v \widehat f(\xi_1,\xi_2) = A_j^h \widehat g(\xi_2,\xi_1), \quad X_m^v \widehat f (\xi_1,\xi_2) = X_m^h \widehat g(\xi_2,\xi_1),
\quad \mbox{and }  S_k^v \widehat f(\xi_1,\xi_2) = S_k^h \widehat g(\xi_2,\xi_1).
\]


\begin{definition}
\label{defi:ConeGaborShearlets}
Let $\phi$ be an orthogonal scaling function of a $16$-band multiresolution analysis in $L^2(\mathbb R)$, with associated
orthonormal wavelets $\{\psi_\ell : \ell=1,\ldots, 15\}$, and let $N_0,\tau \in \mathbb N$ such that $w$ is the unit norm window function of an $\frac{N_0}{\tau}$-tight
Gabor frame $\{M_{m_2\tau/2}T_{2k/N_0} w: m_2, k \in \mathbb Z\}$ for $L^2(\mathbb R)$, with the periodization $w^\circ$ as described
in Theorem \ref{theo:sonder}. Let $j_0\in\bZ$. Then the associated {\em cone-adapted Gabor shearlet system} is defined by
\begin{eqnarray*}
\cC\cG\cS_{j_0}(\phi,\{\psi_\ell\}_{\ell=1}^{15}; w)
& := & \{\Phi_{j_0,k,m}^h,\Phi_{j_0,k,m}^v : k\in\bZ, |k/N_0|\le  2^{j-1}, m\in\bZ^2\}\\
& &  \cup \{\Psi_{j,k,m}^{h,\ell}, \Psi_{j,k,m}^{v,\ell} : j,k\in\bZ, j\ge j_0, |k/N_0|\le  2^{j-1}, m\in\bZ^2, \ell=1,\ldots, 15\} \subseteq L^2(\mathbb R^2),
\end{eqnarray*}
where
\[
   \widehat{\Phi_{j,k,m}^h} = \Xi_h A_j^h X^h_{(m_1,m_2\tau/2)} S^h_{2k/N_0}  \Gamma_h\widehat\phi\otimes w^\circ
   \quad \mbox{and} \quad
   \widehat{\Phi_{j,k,m}^{v}} = \Xi_v A_j^v X^v_{(m_1, m_2\tau/2)} S^v_{2k/N_0} \Gamma_v w ^\circ \otimes\widehat\phi \,
\]
and accordingly
\[
   \widehat{\Psi_{j,k,m}^{h,\ell}} = \Xi_h A_j^h X^h_{(m_1,m_2\tau/2)} S^h_{2k/N_0}  \Gamma_h\widehat\psi_\ell\otimes w^\circ
   \quad \mbox{and} \quad
   \widehat{\Psi_{j,k,m}^{v,\ell}} = \Xi_v A_j^v X^v_{(m_1, m_2\tau/2)} S^v_{2k/N_0} \Gamma_v w^\circ \otimes\widehat\psi_\ell.
\]
\end{definition}

For an illustration of the support of the special case of cone-adapted Shannon shearlets and the more general
cone-adapted Gabor shearlets, we refer to Figure \ref{fig:ConeGaborShearlet}.

\begin{figure}[h]
\begin{center}
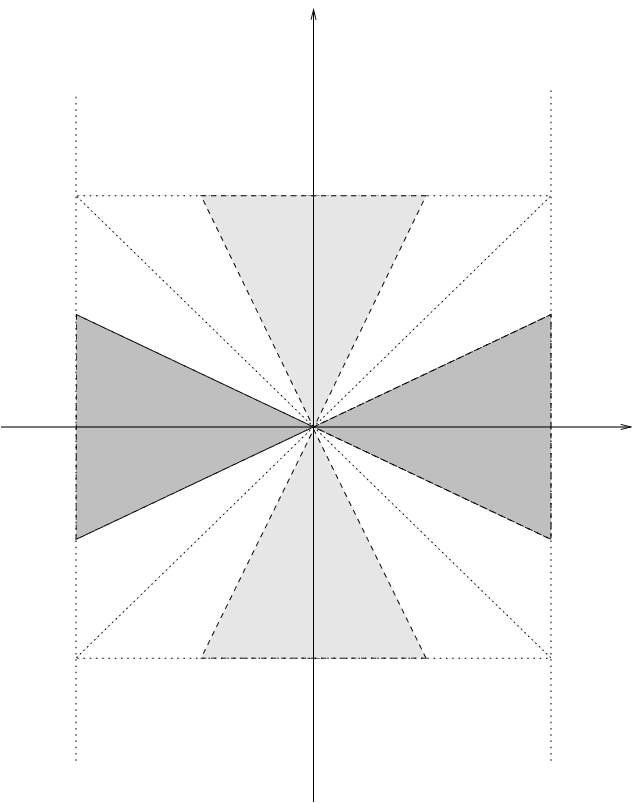
\put(-223,-10){{\footnotesize (a)}}
\hspace*{1em}
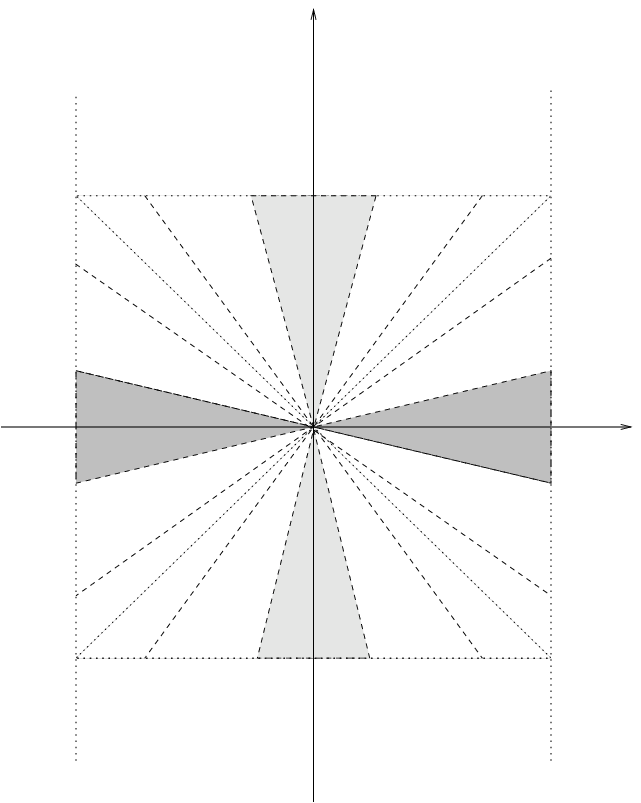
\put(-68,-10){{\footnotesize (b)}}
\end{center}
\caption{(a) Support of the cone-adapted Shannon shearlet scaling functions in the frequency domain, in horizontal and vertical 
orientations;  (b)
Support of a cone-adapted Gabor shearlet scaling function in the frequency domain, corresponding to a Gabor frame with $N_0=4$.
The smallest achievable redundancy with $N_0=4$ is obtained by setting $\tau=3$, resulting in $N_0/\tau=4/3$. With sufficiently large values of $N_0$,
and the implicit finer directional resolution,
the choice $\tau=N_0-1$ allows the redundancy to get as close to one as desired.}
\label{fig:ConeGaborShearlet}
\end{figure}


\subsection{MRA Structure}
\label{subsec:ConeMRA}

%

By classical results from frame theory, without restriction of the parameters the system consisting
of the functions $ \Phi_{j,k,m}^h$, $ \Phi_{j,k,m}^v$ forms a tight frame.

\begin{theorem}
Let $N_0 \in 2\mathbb N$, $\alpha = 2/N_0$ and $\tau \in \mathbb N$, and let
$w$ be the unit norm window function of an $\frac{N_0}{\tau}$-tight
Gabor frame $\{M_{m_2\tau/2}T_{k\alpha} w: m_2, k \in \mathbb Z\}$ for $L^2(\mathbb R)$, such that the periodization $w^\circ$
is a unit-norm Gabor frame $\{ M_{m_2\tau/2} T_{k \alpha} w^\circ \}$ for $L^2([-1,1])$, then
the system $\cC\cG\cS_{j_0}(\phi,\{\psi_\ell\}_{\ell=1}^{15}; w)$ for any $j_0\in\bZ$, or the system
\[
\{\Psi_{j,k,m}^{h,\ell}, \Psi_{j,k,m}^{v,\ell} : j,k\in\bZ, |k/N_0|\le  2^{j-1},  m\in\bZ^2, \ell=1,\ldots, 15\}
\]
 is a unit-norm $\frac{N_0}{\tau}$-tight frame for $L^2(\bR^2)$, with redundancy $\frac{N_0}{\tau}$.
\end{theorem}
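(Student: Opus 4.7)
The plan is to leverage the orthogonal decomposition $L^2(\mathbb{R}^2) = L^2_h(\mathbb{R}^2) \oplus L^2_v(\mathbb{R}^2)$ from the preceding section, reducing the $\frac{N_0}{\tau}$-tight frame property on $L^2(\mathbb{R}^2)$ to the corresponding claim for the horizontal and vertical halves of $\cC\cG\cS_{j_0}$ on their range subspaces. By the coordinate-swap symmetry built into the definitions of $A_j^v, S_k^v, X_m^v, \Gamma_v$, only the horizontal case requires work, and since $\Xi_h: L^2(\mathcal{C}_h) \to L^2_h(\mathbb{R}^2)$ is an isometry it suffices to verify that the pre-image family under $\Xi_h^{*}$ is a unit-norm $\frac{N_0}{\tau}$-tight frame for $L^2(\mathcal{C}_h)$.

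The next step is to apply the unitary change of coordinates $\Gamma_h$: it maps $\mathcal{C}_h$ bijectively onto $(\mathbb{R}\setminus\{0\}) \times [-1, 1]$, converts the chirp modulations $X^h_{(m_1, m_2\tau/2)}$ into separable standard modulations $M_{m_1} \otimes M_{m_2\tau/2}$, the shears $S^h_{2k/N_0}$ into translations $I \otimes T_{2k/N_0}$ in the second coordinate, and the anisotropic dilation $A_j^h$ into a separable dilation that is $16$-adic in the first coordinate and dyadic in the second. The pulled-back system therefore factorizes as a tensor product. The first-coordinate factor is a $16$-band MRA wavelet system built from $\{\phi, \psi_1, \ldots, \psi_{15}\}$: starting at scale $j_0$ it gives the orthonormal basis of $L^2(\mathbb{R})$ via $V_{j_0} \oplus \bigoplus_{j \ge j_0} W_j = L^2(\mathbb{R})$ for $\cC\cG\cS_{j_0}$, or via $\bigoplus_{j \in \mathbb{Z}} W_j = L^2(\mathbb{R})$ for the wavelet-only variant, and is therefore tight with constant $1$. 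The second-coordinate factor, at each scale $j$, is a rescaled version of the periodized Gabor system $\{M_{m_2\tau/2} T_{k\alpha} w^\circ\}$ of Theorem~\ref{theo:sonder}, which is unit-norm $\frac{N_0}{\tau}$-tight on $L^2([-1, 1])$. Because the tensor product of a Parseval frame with an $\frac{N_0}{\tau}$-tight frame is $\frac{N_0}{\tau}$-tight on the product space, assembling the two factors and then pushing forward under $\Xi_h$ yields the desired tight frame on $L^2_h(\mathbb{R}^2)$; symmetry provides the vertical half.

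The main obstacle is the bookkeeping that ties the scale-dependent shear cutoff $|k/N_0| \le 2^{j-1}$ to the Sondergaard frame at each scale $j$. Because $A_j^h$ rescales $\gamma_2^h(\xi) = \xi_2/\xi_1$ by a factor of $2^j$, the second-coordinate variable after dilation ranges over $[-2^j, 2^j]$ rather than $[-1, 1]$; one must verify that the translates $T_{2k/N_0} w^\circ$ with $|k/N_0| \le 2^{j-1}$, in conjunction with the $2$-periodicity of $w^\circ$ inherited from the periodization in Theorem~\ref{theo:sonder}, realize exactly one copy of the $L^2([-1,1])$ tight Gabor frame per period over the enlarged interval, with neither gaps nor overcounting. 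Once this identification is made, the unit-norm property follows from $\|\phi\| = \|\psi_\ell\| = 1 = \|w^\circ\|$, and the uniform redundancy $\mathcal{R}^- = \mathcal{R}^+ = \frac{N_0}{\tau}$ is immediate from the definition in Subsection~\ref{subsec:redundancy}.
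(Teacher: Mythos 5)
Your proposal follows essentially the same route as the paper: reduce to the horizontal cone via the isometry $\Xi_h$ and the orthogonality of the ranges of $\Xi_h$ and $\Xi_v$, then verify tightness of the pulled-back tensor-product system on $L^2(\mathcal{C}_h)$ and transfer it by the isometry. In fact you supply more detail than the paper, which simply asserts that the family $\{A_j^h X^h_{(m_1,m_2\tau/2)} S^h_{2k/N_0}\Gamma_h\widehat\psi_\ell\otimes w^\circ\}$ is an $\frac{N_0}{\tau}$-tight frame for $L^2(\mathcal{C}_h)$; your identification of the scale-dependent shear range with one copy of the periodized Gabor frame per period is exactly the bookkeeping the paper leaves implicit.
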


\begin{proof}
The family
\[
\left\{A_j^hX^h_{(m_1,m_2\tau/2)}S_{2k/N_0}^h\Gamma_h\widehat\psi_\ell\otimes w^\circ: j,k\in\bZ, |k/N_0|\le  2^{j-1}, m\in\bZ^2; \ell=1,\ldots,
15\right\}
\]
is an $\frac{N_0}{\tau}$-tight frame for $L^2(\mathcal C_h)$.
Consequently, under the isometry,
\[
\left\{\widehat{\Psi^{h,\ell}_{j,k,m}}=
\Xi_h A_j^hX^h_{(m_1,m_2\tau/2)}S_{2k/N_0}^h\Gamma_h\widehat\psi_\ell\otimes w^\circ:
j,k\in\bZ, |k/N_0|\le  2^{j-1}, m\in\bZ^2, \ell=1,\ldots,15\right\}
\]
is an $\frac{N_0}{\tau}$-tight frame for
$P_h(L^2(\mathbb R^2))$. Similarly,
\[
\left\{\widehat{\Psi^{v,\ell}_{j,k,m}}=
\Xi_v A_j^vX^v_{(m_1,m_2\tau/2)}S_{2k/N_0}^v\Gamma_v\widehat\psi_\ell\otimes w^\circ:
j,k\in\bZ,|k/N_0|\le  2^{j-1}, m\in\bZ^2, \ell=1,\ldots,15\right\}
\]
is a $\frac{N_0}{\tau}$-tight frame for
$P_v(L^2(\mathbb R^2))$. By the orthogonality of the ranges for
$P_h$ and $P_v$, the union

\[
 \{\Psi^{h,\ell}_{j,k,m}, \Psi^{v,\ell}_{j,k,m}: j,k\in\bZ, |k/N_0|\le  2^{j-1}, m\in\bZ^2, \ell=1,\ldots,15\}
\]
is an $\frac{N_0}{\tau}$-tight frame for $L_2(\mathbb R^2)$. The proof for the case of $\cC\cG\cS_{j_0}(\phi,\{\psi_\ell\}_{\ell=1}^{15}; w)$ is similar.
\end{proof}

Corollary \ref{coro:redundancy} implies that the redundancy can be chosen arbitrarily close to one.

\begin{corollary}
The redundancy $\frac{N_0}{\tau}$ of the preceding Gabor shearlet system can be chosen
arbitrarily close to one.
\end{corollary}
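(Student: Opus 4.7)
The plan is to reduce this corollary directly to Corollary \ref{coro:redundancy}, which already establishes that the redundancy $N_0/\tau$ of the periodized Gabor frame $\{M_{m\tau/2} T_{k\alpha} w^\circ\}$ can be made arbitrarily close to one by choosing $N_0,\tau \in \mathbb N$ with $\tau < N_0$ both sufficiently large. Since the preceding theorem shows that the cone-adapted Gabor shearlet system is a unit-norm $\frac{N_0}{\tau}$-tight frame for $L^2(\mathbb R^2)$, and since a unit-norm $A$-tight frame has uniform redundancy equal to $A$ by the definition in Subsection \ref{subsec:redundancy}, the shearlet redundancy is exactly $N_0/\tau$.

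The concrete step I would take is: fix any $\eps > 0$ and choose $N_0 \in 2\mathbb N$ large enough that $N_0/(N_0 - 1) < 1 + \eps$; then set $\tau = N_0 - 1$. This choice is admissible in the hypotheses of Theorem \ref{theo:sonder} (we have $\tau \in \mathbb N$ and $\tau < N_0$), so the periodization $w^\circ$ generates an $\frac{N_0}{\tau}$-tight Gabor frame for $L^2([-1,1])$. Feeding this $w^\circ$ into Definition \ref{defi:ConeGaborShearlets} and invoking the preceding theorem, the resulting cone-adapted Gabor shearlet system is unit-norm and $\frac{N_0}{N_0-1}$-tight, whence its uniform redundancy $\mathcal R^- = \mathcal R^+ = \frac{N_0}{N_0-1} < 1 + \eps$.

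There is essentially no obstacle here: the hard work has already been absorbed into Corollary \ref{coro:redundancy} (controlling the redundancy of the one-dimensional windowed Gabor frame after periodization) and into the preceding theorem (propagating the tightness constant through the tensor products, the unitary changes of coordinates $\Gamma_h, \Gamma_v$, the parabolic dilations $A_j^h, A_j^v$, the chirp modulations $X_m^h, X_m^v$, the shears $S_{2k/N_0}^h, S_{2k/N_0}^v$, and finally the isometries $\Xi_h, \Xi_v$ whose ranges are orthogonal complements in $L^2(\mathbb R^2)$). The only thing worth double-checking is that the definition of uniform redundancy from Subsection \ref{subsec:redundancy} applies verbatim: the shearlet system is unit-norm (as asserted in the preceding theorem), so $\mathcal R(x)$ reduces to the tight frame bound for every $x \in \mathbb S$, giving $\mathcal R^- = \mathcal R^+ = N_0/\tau$ without further computation.
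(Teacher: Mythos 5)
Your proposal is correct and follows exactly the route the paper intends: the paper gives no separate proof but simply notes that Corollary \ref{coro:redundancy} (taking $\tau=N_0-1$ with $N_0$ large, as also indicated in the caption of Figure \ref{fig:ConeGaborShearlet}) combined with the preceding tight-frame theorem yields the claim. Your explicit $\eps$-argument and the check that unit-norm $A$-tightness gives uniform redundancy $A$ are exactly the intended (and essentially trivial) steps.
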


\section{Optimal Sparse Approximations}
\label{sec:approx}

In this section, we show that under certain assumptions the cone-adapted Gabor shearlet system $\cC\cG\cS_{j_0}(\phi,\{\psi_\ell\}_{\ell=1}^{15};w)$
as defined in Definition \ref{defi:ConeGaborShearlets} provides optimally sparse approximation of cartoon-like functions,
similar to `classical' shearlets (see \cite{GL07a,KL10}). Due to the asymptotic nature of the optimally approximation results, which involve only with shearlets with large scale $j$, without loss of generality, we consider $\cC\cG\cS_{j_0}(\phi,\{\psi_\ell\}_{\ell=1}^{15};w)$ with $j_0=0$ and denote the system as $\cC\cG\cS(\Psi^h, \Psi^v)$. We will first state the main result and the core proof in the
following subsection, and postpone the very technical parts of the proof to later subsections.

\subsection{Main Result}
\label{subsec:main}


We first require the definition of cartoon-like functions. For this, we recall that in \cite{CD04} $\mathcal{E}^2(A)$
denotes the set of {\em cartoon-like functions} $f$, which are $C^2$ functions  away from a $C^2$ edge singularity:
$f=f_0+f_1\chi_B$, where $f_0,f_1\in C^2([0,1]^2)$ and $\|f\|_{C^2}:=\sum_{|v|\le2}\|\partial^vf\|_\infty\le 1$ with
$\partial^v=\partial_1^{v_1}\partial_2^{v_2}$ being the 2D differential operator with order $\partial_1=\frac{\partial}{\partial x_1}$,
$\partial_2=\frac{\partial}{\partial x_2}$, and $v=(v_1,v_2)$. More precisely, in polar coordinates, let
$\rho(\theta):[0,2\pi)\mapsto[0,1]^2$ be a radius function satisfying $\sup_\theta|\rho''(\theta)|\le A$ and $\rho\le\rho_0\le
1$. The set $B\subset\bR^2$ is given by $B = \{x \in [0,1]^2: ||x||_2\le \rho(\theta)\}$. In particular, the boundary $\partial B$
is given by the curve in $\bR^2$: $ \beta(\theta)=(\rho(\theta)\cos\theta, \rho(\theta)\sin\theta)$.

Utilizing this notion, we can now formulate out main result concerning optimal sparse approximation of such cartoon-like functions
by our cone-adapted Gabor shearlet system as follows.

\begin{theorem}\label{thm:main2}
Let $f\in \mathcal{E}^2(A)$ and $f_N$ be the $N$-term approximation
of $f$ from the $N$  largest cone-adapted Gabor shearlet coefficients
$\{\ip{f}{\Psi_\mu}:\Psi_\mu\in\cC\cG\cS(\Psi^h,\Psi^v)\}$ in magnitude. Then
\[
\|f-f_N\|_2^2\le c\cdot N^{-2}\cdot (\log N)^3.
\]
\end{theorem}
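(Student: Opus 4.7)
The plan is to follow the standard template for proving optimal $N$-term approximation of cartoon-like functions, as carried out for band-limited shearlets in \cite{GL07a} and for compactly supported shearlets in \cite{KL10}, but adapted to account for the non-standard group structure of Gabor shearlets. Since $\cC\cG\cS(\Psi^h,\Psi^v)$ is a unit-norm tight frame (with redundancy arbitrarily close to one), the reconstruction error $\|f-f_N\|_2^2$ is bounded by the tail of the decreasingly rearranged sequence of shearlet coefficients $|\theta_n(f)|$. Thus the theorem reduces to showing that $\sum_{n>N}|\theta_n(f)|^2 \le c N^{-2}(\log N)^3$, which in turn follows from a weak-$\ell^{2/3}$ type bound on $(\theta_n(f))$.

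The first step is to write $f = f_0 + f_1\chi_B$ and handle the two contributions separately. For the smooth part $f_0$, standard arguments using the vanishing moments and Schwartz-type decay of the Meyer generators $\psi_\ell$ (in the $\gamma_1$-direction) yield rapid decay in $j$ of $|\ip{f_0}{\Psi^{\iota,\ell}_{j,k,m}}|$; combined with the counting bound on the number of atoms at each scale this produces a negligible contribution. The analogous estimate holds for $f_1$ away from $\partial B$.

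The main work concerns the singular piece $f_1\chi_B$. Following the by-now-classical strategy, at each scale $j$ one partitions the index set into three regimes according to the geometric relation between the essential support of $\Psi^{\iota,\ell}_{j,k,m}$ and the edge curve $\partial B = \{\beta(\theta)\}$: (i) supports that do not meet $\partial B$, for which iterated integration by parts in the smooth region yields fast polynomial decay; (ii) supports that meet $\partial B$ but whose shear parameter $2k/N_0$ is far from matching the tangent slope of $\partial B$ at the contact point, for which non-stationary phase in the chirp-modulated integral produces cancellation; and (iii) supports that meet $\partial B$ with shear parameter approximately matching the tangent direction. Type (iii) contains the dominant coefficients, which can be bounded individually by $O(2^{-3j/2})$ via a direct oscillatory-integral estimate along the linearized boundary (using $\sup|\rho''|\le A$), while the bound $O(2^j)$ on their count follows from the number of parabolic tiles along a $C^2$ curve of length $\le 1$. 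Summing all three types at scale $j$ yields an $O(2^{-2j})$ energy contribution per scale, and the logarithmic factor $(\log N)^3$ appears via the standard level-$j$ versus coefficient-rank comparison.

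The principal obstacle, as the authors explicitly note in the introduction, is that Gabor shearlets do not fall into the parabolic molecule framework of \cite{GK12}, so the decay estimates used in previous shearlet sparsity proofs cannot be cited verbatim. The crux of the proof is therefore to establish the required space- and frequency-domain decay of $\Psi^{\iota,\ell}_{j,k,m}$ from scratch, exploiting: the $C^\infty$ compact support of $\widehat\psi_\ell$ (Meyer), which after $\Gamma_h$ localizes $\widehat{\Psi^{h,\ell}_{j,k,m}}$ to a parabolic frequency tile with smooth boundary; the Feichtinger-algebra membership of the window $w$, which after periodization provides rapid decay of $w^\circ$ in the shear variable; and the commutation relation of Proposition \ref{prop:Weyl}, which preserves the Gabor-frame structure under shearing so that spatial estimates in the sheared coordinate can be obtained by Poisson summation arguments. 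Assembling these ingredients into the three-regime analysis above, and using the tightness of the frame to pass from coefficient decay to approximation error, completes the proof.
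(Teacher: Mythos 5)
Your overall skeleton is the right one and matches the paper's: by tightness of the frame, $\|f-f_N\|_2^2$ is controlled by the tail $\sum_{n>N}|s(f)|_{(n)}^2$ of the rearranged coefficient sequence, which reduces the theorem to a weak-$\ell^{2/3}$ bound on the coefficients at each scale together with the uniform bound $|\ip{f}{\Psi_\mu}|\le c\,2^{-3j/2}$ that truncates the scale sum at $j\lesssim \log_2(\epsilon^{-1})$ and produces the logarithmic factor; this is exactly Theorem~\ref{thm:main1} and Corollary~\ref{cor:coeffwl} in the paper. You also correctly identify that the parabolic-molecule transference of \cite{GK12} is unavailable and that the frequency localization coming from the Meyer filters and the periodized window is what must carry the estimates.

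Where your plan diverges — and where I see a genuine gap — is in the organization of the core estimate. You split $f=f_0+f_1\chi_B$ globally and then classify the \emph{shearlet indices} at scale $j$ into three regimes according to whether the (essential) spatial support of $\Psi^{\iota,\ell}_{j,k,m}$ misses the edge, meets it with mismatched shear, or meets it tangentially. That is the compactly-supported-shearlet strategy of \cite{KL10}, and it presupposes that the index $m$ parametrizes spatial position on a parabolic tiling. For Gabor shearlets it does not: the modulations $X_{(m_1,m_2\tau/2)}$ are \emph{chirp} modulations in frequency (the paper stresses that they are not Euclidean translations), so at fixed $(j,k)$ the family $\{\Psi_{j,k,m}\}_{m\in\bZ^2}$ is not a lattice of translates, and the "spatial location" of an atom is determined by stationary phase of a nonlinear phase in $m$. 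Consequently your regime (i) ("supports that do not meet $\partial B$") and your count of "$O(2^j)$ parabolic tiles along the curve" in regime (iii) are not well defined as stated, and repairing them would require precisely the from-scratch spatial decay theory whose absence you flag as the main obstacle. The paper avoids this entirely: it localizes the \emph{function} rather than the atoms, using a scale-dependent smooth partition of unity $\sum_{Q\in\mathcal{Q}_j}w_Q=1$ into cubes of side $2^{-j}$, proves the weak-$\ell^{2/3}$ bounds for edge fragments and smooth fragments separately (Theorems~\ref{thm:coeffEdge} and~\ref{thm:coeffSmooth}), and runs the whole estimate in the frequency domain: the directional decay $\int|\widehat{f_Q}(\lambda\cos\theta,\lambda\sin\theta)|^2d\lambda\le c\,2^{-4j}(1+2^j|\sin\theta|)^{-5}$ of Theorem~\ref{thm:edgeEstimate} controls the dependence on the shear $k$, while the dependence on $m$ is handled by integrating by parts against the chirp exponentials $e_{j,m}(\gamma^h(\xi))$ with a differential operator $L_t$ adapted to the chirp phase (Corollary~\ref{cor:edgeEstimate3}), exploiting that $\{e_{j,m}(\gamma^h(\cdot))\}_m$ is an orthonormal system on the frequency wedge $W_{j,k}$. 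If you want to salvage your route you must first prove a spatial almost-orthogonality/tiling statement for chirp-modulated atoms; as written, that step would fail.
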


To prove this theorem, we  follow the main idea as in
\cite{CD04,GL07a}. In a nutshell, we first use a smooth partition of unity that decomposes a cartoon-like function $f$ into
small dyadic cubes of size about $2^{-j}\times 2^{-j}$. If  $j$ is large enough, then there are only  two types of dyadic cubes: one intersects with the
singularity of the function, namely, the edge fragements,  and the other only contains the smooth region of the function. We then analyze the decay property of the shearlet coefficients. Eventually, by combining the decay estimation of each dyadic cube, we can prove Theorem~\ref{thm:main2}.

Though the main steps are similar to \cite{CD04,GL07a}, we however would like to point out that
some of the key steps require slightly technical extensions of results in  \cite{CD04, GL07a}.
For the results available in \cite{CD04, GL07a}, we simply state them here without proof for the purpose of readability.

Let us next state some necessary auxiliary results, including Theorem~\ref{thm:coeffEdge} for the decay estimate with respect to those edge fragements and Theorem~\ref{thm:coeffSmooth}  for the decay estimate with respect to those smooth regions.

\begin{figure}[h]
\begin{center}
\includegraphics[width=3.0in]{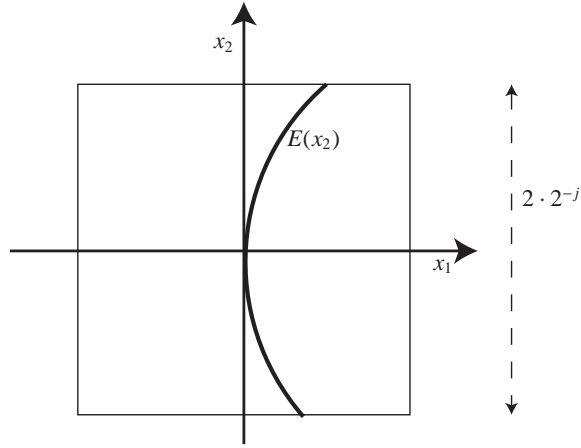}
\put(-140,150){\small {$x_2$}}
\put(-57,66){\small {$x_1$}}
\put(-112,113){\small {$E(x_2)$}}
\put(-23,90){\small {$2 \cdot 2^{-j}$}}
\end{center}
\caption{An edge fragment.}
\label{fig:edgefragment}
\end{figure}
An edge fragement (see Figure~\ref{fig:edgefragment}) is of the form
\begin{equation}\label{def:edgeFrag}
f(x_1,x_2) = w_0(2^{j}x_1,2^{j}x_2)g(x_1,x_2){\bf1}_{\{x_1\ge E(x_2)\}},
\end{equation}
where $w_0,g$ are smooth functions supported on $[-1,1]^2$ and
$|E''(x)|\le A$.

Let $\mathcal{Q}_j$ be the collection of dyadic cubes of the form $Q=[m_1/2^j,(m_1+1)/2^j]\times[m_2/2^j,(m_2+1)/2^j]$. For $w_0$ a nonnegative $C^\infty$ function with support in $[-1,1]^2$, we can define a smooth partition of unity
\[
\sum_{Q\in\mathcal{Q}_j}w_Q(x)=1,  \quad x\in\bR^2
\]
with $w_Q=w_0(2^jx_1-m_1,2^jx_2-m_2)$.
If $Q\in\mathcal{Q}_j$ intersects with the curve singularity, then $f_Q:=fw_Q$ is an edge fragment.

Let $\mathcal{Q}_j^0$ be the collection of those dyadic cubes $Q\in\mathcal{Q}_j$ such that the edge singularity intersects with the support of $w_Q$. Then the cardinality
\begin{equation}
|\mathcal{Q}_j^0|\le c\cdot 2^{j}.
\end{equation}
Similarly, $\mathcal{Q}_j^1:=\mathcal{Q}_j\backslash\mathcal{Q}_j^0$ are those cubes that do not intersect with the edge singularity. We have
\begin{equation}
|\mathcal{Q}_j^1|\le c\cdot 2^{2j}+4\cdot 2^j.
\end{equation}

Let $\{s_\mu\}$ be a sequence. We define $|s_\mu|_{(N)}$ to be the $N$th largest entry of the $\{|s_\mu|\}$. The \emph{weak-$\ell^p$ quasi-norm $\|\cdot\|_{w\ell^p}$} of $\{s_\mu\}$ is defined to be
\[
\|s_\mu\|_{w\ell^p}:=\sup_{N>0} \left(N^{1/p}\cdot|s_\mu|_{(N)}\right),
\]
which is equivalent to
\[
\|s_\mu\|_{w\ell^p}=\left(\sup_{\epsilon>0} (\#\{\mu: |s_\mu|>\epsilon\}\cdot \epsilon^p)\right)^{1/p},
\]
We abbreviate indices for elements in $\cC\cG\cS(\Psi^h,\Psi^v)$ and write $\Psi_\mu$ with $\mu=(j,k,m;\iota,\ell)$. The index set at scale $j$ is
$\Lambda_j:=\{\mu=(j,k,m;\iota,\ell): k\in\bZ, |k/N_0|\le 2^{j-1}, m\in\bZ^2;
\ell=1,\ldots,15, \iota=h,v\}$.

Now similar to \cite[Theorem 1.3]{GL07a}, we have the following result which provides a decay estimate of
the coefficients with respect to those $Q\in\mathcal{Q}_j^0$.
\begin{theorem}
\label{thm:coeffEdge}
Let $f\in \mathcal{E}^2(A)$ and $f_Q:=fw_Q$. For $Q\in \mathcal{Q}_j^0$ with $j\ge0$ fixed, the sequence of coefficients $\{\ip{f_Q}{\Psi_\mu}: \mu\in \Lambda_j\}$ obeys
\[
\|\ip{f_Q}{\Psi_\mu}\|_{w\ell^{2/3}}\le c\cdot 2^{-3j/2}
\]
for some constant c independent of $Q$ and $j$.
\end{theorem}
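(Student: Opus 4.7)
My plan is to follow the strategy used by Guo and Labate in \cite{GL07a} for classical shearlets, adapted to the chirp-modulation group structure of Gabor shearlets. By the symmetry between the horizontal and vertical cones and since $\ell$ ranges over a finite set, it suffices to bound $|\ip{f_Q}{\Psi^{h,\ell}_{j,k,m}}|$ for a single $\ell$. Let $(x_1^*, x_2^*)$ denote the center of $Q$ and set $s_0 := E'(x_2^*)$, the slope of the edge tangent there. A shear index $k$ is called \emph{aligned} if $|2k/N_0 - s_0|$ is $O(2^{-j})$, and \emph{non-aligned} otherwise; only $O(1)$ shear indices are aligned.

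The first step, which I expect to be the main obstacle, is to establish a pointwise estimate of the form
\[
|\ip{f_Q}{\Psi^{h,\ell}_{j,k,m}}| \le C_N \cdot 2^{-3j/2} \cdot \bigl(1+2^j|2k/N_0-s_0|\bigr)^{-N} \cdot \bigl(1+|m_1-\mu_1|\bigr)^{-N} \bigl(1+|m_2-\mu_2|\bigr)^{-N}
\]
for every $N \in \bN$, where $\mu_1, \mu_2$ are centers depending on $k$ and on the location of $Q$. The derivation is carried out in the frequency domain. After the unitary change of variables $\eta = \gamma^h(\xi)$, the shearlet becomes a tensor product of a dilated Meyer wavelet in $\eta_1$ and a translated periodized Gabor atom in $\eta_2$. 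Using the Taylor expansion of $E$ about $x_2^*$ to approximate the edge by a line, one integrates by parts repeatedly in $\eta_1$ and $\eta_2$ against $\widehat{f_Q}$, exploiting (i) the $C^2$ regularity of the edge curve and the smoothness of $w_Q$, (ii) the smoothness and essentially bandpass support of $\widehat{\psi_\ell}$, and (iii) the rapid decay of $w$. The chirp phases from $X^h_{(m_1, m_2\tau/2)}$ and the Jacobian of the nonlinear map $\gamma^h$ must be carefully tracked; a further subtlety is that the periodization $w^\circ$ forces $m_2$ to be read modulo $N_0$ in the bound, which one disentangles by unfolding the periodization within each fundamental domain.

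The second step is a counting argument to deduce the weak-$\ell^{2/3}$ bound from this pointwise estimate. Choosing $N$ sufficiently large, the contribution from non-aligned $k$ is summable with negligible total weight. For each of the $O(1)$ aligned shears, the polynomial decay in $(m_1, m_2)$ yields
\[
\sum_{m \in \bZ^2} |\ip{f_Q}{\Psi^{h,\ell}_{j,k,m}}|^{2/3} \le C \cdot 2^{-j},
\]
and summing over the finitely many aligned $k$ and over $\ell$ preserves this bound. Combined with the analogous vertical-cone estimate, this gives $\sum_{\mu \in \Lambda_j} |\ip{f_Q}{\Psi_\mu}|^{2/3} \le C \cdot 2^{-j}$. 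Since $\|\cdot\|_{w\ell^{2/3}} \le \|\cdot\|_{\ell^{2/3}}$, this produces the desired bound $\|\ip{f_Q}{\Psi_\mu}\|_{w\ell^{2/3}} \le c \cdot 2^{-3j/2}$ with a constant independent of $Q$ and $j$.
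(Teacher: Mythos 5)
Your overall architecture (frequency-domain analysis, decay in the modulation index from integration by parts, angular localization in the shear index, then a counting argument) is the right Guo--Labate-style strategy, and it is essentially what the paper does. But your first step claims far more than is available: a \emph{pointwise} bound with decay $\bigl(1+2^j|2k/N_0-s_0|\bigr)^{-N}$ for \emph{every} $N$. For a $C^2$ edge this is false; the curvature of $E$ spreads the singular support of $\widehat{f_Q}$ over an angular neighborhood, and the correct statement (Theorem~\ref{thm:edgeEstimate}, inherited from Cand\`es--Donoho) is an $L^2$ estimate along rays with a \emph{fixed finite} exponent, $\int_{|\lambda|\in I_j}|\widehat f(\lambda\cos\theta,\lambda\sin\theta)|^2d\lambda\le c\,2^{-4j}(1+2^j|\sin\theta|)^{-5}$, which after integrating over the angular support of $\sigma_{j,k}^\ell(\gamma^h(\cdot))$ gives only $(1+|k|)^{-5}$ (Corollary~\ref{cor:edgeEstimate2}). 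Likewise the decay in $m$ is obtained not pointwise but in $L^2$ over blocks: the paper applies the operator $L_t$, lower-bounds its symbol by $G_k(m)=\bigl[1+\tfrac{(m_1-m_2k)^2}{(1+|k|)^2}\bigr](1+m_2^2)$ on the essential support $W_{j,k}$, and uses orthonormality of the exponentials $e_{j,m}$ there to get $\sum_{m\in R_{\bm}}|\ip{f_Q}{\Psi_\mu}|^2\le c\,G_{\bm}^{-2}2^{-3j}(1+|k|)^{-5}$ over blocks $R_{\bm}$ of cardinality $\approx 1+|k|$. Your sketch never engages with the fact that only these $L^2$/finite-order estimates are available, and the Taylor-linearization-plus-repeated-integration-by-parts step, which you correctly flag as the main obstacle, cannot deliver the super-polynomial angular decay you assume.

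This matters because your second step then collapses. With only $(1+|k|)^{-5}$ decay, the non-aligned shears are not negligible in the \emph{strong} $\ell^{2/3}$ norm: H\"older over each block gives $\sum_{m\in R_{\bm}}|\ip{f_Q}{\Psi_\mu}|^{2/3}\le(\#R_{\bm})^{2/3}\bigl(\sum_{m\in R_{\bm}}|\ip{f_Q}{\Psi_\mu}|^2\bigr)^{1/3}\le c\,G_{\bm}^{-2/3}2^{-j}(1+|k|)^{-1}$, and summing over the $\sim 2^j$ shears produces an extra factor of $j$, i.e.\ $\|\ip{f_Q}{\Psi_\mu}\|_{\ell^{2/3}}\lesssim j^{3/2}2^{-3j/2}$. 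So the route ``prove the strong bound and use $\|\cdot\|_{w\ell^{2/3}}\le\|\cdot\|_{\ell^{2/3}}$'' cannot give the stated estimate; the logarithmic loss is precisely why the theorem is formulated in weak $\ell^{2/3}$. The paper instead counts: $N_{j,k,\bm}(\epsilon)\le c\min\bigl(1+|k|,\,G_{\bm}^{-2}2^{-3j}\epsilon^{-2}(1+|k|)^{-5}\bigr)$, and summing this minimum over $|k|\lesssim 2^j$ (splitting at $|k|\sim(G_{\bm}^{-2}2^{-3j}\epsilon^{-2})^{1/6}$) yields $c\,G_{\bm}^{-2/3}2^{-j}\epsilon^{-2/3}$, hence $\#\{\mu\in\Lambda_j:|\ip{f_Q}{\Psi_\mu}|>\epsilon\}\le c\,2^{-j}\epsilon^{-2/3}$, which is equivalent to the weak-$\ell^{2/3}$ bound $c\,2^{-3j/2}$. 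You need to replace your aligned/non-aligned dichotomy with this interpolation between the trivial block-cardinality bound and the Chebyshev bound.
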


Similarly, for the smooth part, we can show
that the sequence of coefficients $\{\ip{f_Q}{\Psi_\mu}: \mu\in
\Lambda_j\}$ with $Q\in\mathcal{Q}_j^1$ obeys the following estimate (c.f. \cite[Theorem~1.4]{GL07a}).
\begin{theorem}
\label{thm:coeffSmooth}
Let $f\in \mathcal{E}^2(A)$. For $Q\in \mathcal{Q}_j^1$ with $j\ge0$ fixed, the sequence of coefficients $\{\ip{f_Q}{\Psi_\mu}: \mu\in \Lambda_j\}$ obeys
\[
\|\ip{f_Q}{\Psi_\mu}\|_{w\ell^{2/3}}\le c\cdot 2^{-3j}
\]
for some constant independent of $Q$ and $j$.
\end{theorem}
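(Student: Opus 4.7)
The plan is to adapt the approach used for classical shearlets in \cite[Theorem~1.4]{GL07a} to the Gabor-shearlet setting: I aim to establish a pointwise estimate $|\ip{f_Q}{\Psi_\mu}|\le C\cdot 2^{-3j}$ together with rapid decay in the shear index $k$ and the translation index $m$, from which the weak-$\ell^{2/3}$ bound will follow by a standard summation argument.

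For the pointwise estimate I would exploit that $\widehat{\psi_\ell}$ vanishes to infinite order at the origin, and perform integration by parts twice in the $x_1$-direction. Since $Q\in\mathcal{Q}_j^1$, the function $f_Q=fw_Q$ is $C^2$-smooth, supported in a cube of side length of order $2^{-j}$, and satisfies $\|\partial^\alpha f_Q\|_\infty\le C\cdot 2^{|\alpha|j}$ for $|\alpha|\le 2$ (the factor $2^{|\alpha|j}$ coming from $w_Q=w_0(2^j\cdot-m)$); in particular $\|\partial_1^2 f_Q\|_{L^2(\bR^2)}\le C\cdot 2^j$. On the other hand, because the Fourier support of $\widehat{\Psi_\mu}$ is contained in the parabolic region $|\xi_1|\sim 2^{2j}$, $|\xi_2-(2k/N_0)\xi_1|\sim 2^j$, the auxiliary function $g$ defined by $\widehat g(\xi):=\widehat{\Psi_\mu}(\xi)/(2\pi i\xi_1)^2$ satisfies $\partial_1^2 g=\Psi_\mu$ and $\|g\|_{L^2(\bR^2)}\le C\cdot 2^{-4j}$ by Plancherel. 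Integrating by parts twice in $x_1$ yields
\[
   \ip{f_Q}{\Psi_\mu}\;=\;\ip{\partial_1^2 f_Q}{g},
\]
and the Cauchy--Schwarz inequality then gives $|\ip{f_Q}{\Psi_\mu}|\le C\cdot 2^j\cdot 2^{-4j}=C\cdot 2^{-3j}$.

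The next step is to upgrade this uniform bound by showing rapid decay in $k$ and $m$. In the $\gamma$-coordinates the Fourier representation of $\Psi_\mu$ is a tensor product of a Meyer wavelet in $\gamma_1$ and a translated window $w^\circ$ in $\gamma_2$, multiplied by the chirp phases $e^{2\pi i m_1\cdot 2^{-4j}\gamma_1}$ and $e^{2\pi i m_2(\tau/2)\cdot 2^j\gamma_2}$. Repeated integration by parts against these oscillatory phases, exploiting the $C^\infty$-smoothness of the wavelet and the window, produces decay factors $(1+|m_1|)^{-N}$ and $(1+|m_2|)^{-N}$ for arbitrary $N$, while the smooth localization of $w^\circ$ around its $k$-th translate yields decay in $|k-k_Q|$, where $(k_Q,m_Q)$ denotes the shearlet index best aligned with $Q$. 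Combined with the previous pointwise bound, this gives for every $N\in\bN$
\[
   |\ip{f_Q}{\Psi_\mu}|\;\le\; C_N\cdot 2^{-3j}(1+|k-k_Q|)^{-N}(1+|m_1-m_{1,Q}|)^{-N}(1+|m_2-m_{2,Q}|)^{-N}.
\]
Summing $|\ip{f_Q}{\Psi_\mu}|^{2/3}$ over $(k,m_1,m_2)$ then produces a series that converges uniformly in $j$, so that $\|\ip{f_Q}{\Psi_\mu}\|_{\ell^{2/3}}\le C\cdot 2^{-3j}$, which in particular bounds the weak-$\ell^{2/3}$ norm by the same quantity.

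The main technical obstacle will be carrying out the integration-by-parts argument cleanly in the $\gamma$-coordinates intrinsic to the Gabor-shearlet construction. The tensor-product Fourier structure of $\widehat{\Psi_\mu}$ is natural in $(\gamma_1,\gamma_2)$, while the smoothness and support estimates on $f_Q$ are natural in the ambient $\xi$- and $x$-coordinates, so the nonlinear change of variables $\xi\mapsto\gamma(\xi)$, together with its Jacobian and the interaction between the chirp phases and the wavelet and window, must be tracked carefully. This should be manageable because $\gamma$ is a smooth, measure-preserving diffeomorphism on $\bR^*\times\bR$, and every shearlet in $\Lambda_j$ with $j\ge 0$ has Fourier support uniformly bounded away from $\xi_1=0$ (for horizontal shearlets; the vertical case is analogous), so all derivatives of $\gamma$ and $\gamma^{-1}$ remain bounded on the relevant region.
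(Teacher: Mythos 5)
There is a genuine gap, and it sits at the heart of your argument: the claimed bound $|\ip{f_Q}{\Psi_\mu}|\le C_N\cdot 2^{-3j}(1+|k-k_Q|)^{-N}(1+|m_1-m_{1,Q}|)^{-N}(1+|m_2-m_{2,Q}|)^{-N}$ is not achievable for chirp-modulated systems, and in particular the decay in $m_1$ fails. The modulation $e^{2\pi i m_1 2^{-4j}\gamma_1(\xi)}$ has the quadratic phase $m_1 2^{-4j}\xi_1^2/2$, while $\widehat{f_Q}$ carries the linear phase $e^{-2\pi i x_Q\cdot\xi}$ coming from the location $x_Q\in[0,1]^2$ of the cube. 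On the frequency support $|\xi_1|\sim 2^{2j}$ the combined phase has a stationary point for every $m_1$ in an interval of length $\sim |x_{Q,1}|\,2^{2j}$, so no integration by parts produces decay for those $O(2^{2j})$ values of $m_1$; equivalently, each derivative in the rescaled chirp variable $u=2^{-4j}\gamma_1(\xi)$ applied to $\widehat{f_Q}$ costs a factor $2^j$, so one gains only for $|m_1-m_2k|\gg 2^j$. (Likewise, for a smooth cube there is no preferred orientation, so there is no $k_Q$ around which decay in $k$ could hold; all $O(2^j)$ shears receive comparable mass.) This is precisely why the paper does not attempt pointwise decay in $m$: it groups $m$ into buckets $R_{\bm}$ of cardinality $1+2^{2j}$, bounds $\sum_{m\in R_{\bm}}|\ip{f_Q}{\Psi_\mu}|^2$ by a Bessel inequality using the orthonormality of $\{e_{j,m}\}$ on $W_{j,k}$ together with the operator $L_t^2=(tI-\tfrac{2^{2j}}{(2\pi)^2}\Delta)^2$, handles the sum over $k$ through the finite overlap of the $W_{j,k}$ (Lemmas~\ref{lem:smooth2} and~\ref{lem:smooth3}), and only then converts $\ell^2$ to $\ell^{2/3}$ by H\"older, paying the bucket-cardinality factor.

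The second problem is that your pointwise bound $2^{-3j}$ is too weak to close the argument once the coefficient count is corrected. With $\gtrsim 2^{2j}$ coefficients per smooth cube that can be comparable to the pointwise maximum, a maximum of $2^{-3j}$ yields only $\sum_\mu|\ip{f_Q}{\Psi_\mu}|^{2/3}\lesssim 2^{2j}\cdot 2^{-2j}=1$, i.e.\ $\|\ip{f_Q}{\Psi_\mu}\|_{w\ell^{2/3}}\lesssim 1$ — off by a factor $2^{3j}$ from the theorem. The missing input is Lemma~\ref{lem:smooth1}, $\int_{W_{j,k}}|\widehat{f_Q}(\xi)|^2\,d\xi\le c\cdot 2^{-10j}$, which exploits that only two derivatives can land on $f$ but arbitrarily many on the $C^\infty$ window $w_Q$, combined with four powers of $|\xi_1|^{-1}\sim 2^{-2j}$; this gains $2^{-4j}$ over your two integrations by parts (which give only $\int_{W_{j,k}}|\widehat{f_Q}|^2\lesssim 2^{-6j}$). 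Fed through $L_t^2$ (Lemma~\ref{lem:smooth3}) and the Bessel--H\"older step, that extra smallness is exactly what produces the exponent $-3j$. Your first-step computation $\|\partial_1^2 f_Q\|_2\|\,\widehat{\Psi_\mu}/\xi_1^2\|_2\le C2^{-3j}$ is correct as far as it goes, but both halves of the strategy built on top of it would need to be replaced by the $L^2$-summation scheme above.
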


The proofs of Theorems~\ref{thm:coeffEdge} and~\ref{thm:coeffSmooth} are very technical and require extension of
results in \cite{CD04,GL07a}. We therefore postpone their detailed proofs to the next two subsections.
As a consequence of Theorem~\ref{thm:coeffEdge} and Theorem~\ref{thm:coeffSmooth},  it is easy to show the following result.
\begin{corollary}\label{cor:coeffwl}
Let $f\in\mathcal{E}^2(A)$ and for $j\ge 0$, let $s_j(f)$ be the sequence of $s_j(f)=\{\ip{f}{\Psi_\mu}: \mu\in \Lambda_j\}$. Then
\[
\|s_j(f)\|_{w\ell^{2/3}}\le c
\]
\end{corollary}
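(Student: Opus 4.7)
The plan is to apply the smooth partition of unity $\{w_Q\}_{Q \in \mathcal{Q}_j}$ introduced above to reduce the claim to the per-cube estimates already proved in Theorems~\ref{thm:coeffEdge} and~\ref{thm:coeffSmooth}. Writing $f = \sum_{Q \in \mathcal{Q}_j} f_Q$ with $f_Q := f w_Q$, linearity of the inner product gives $\ip{f}{\Psi_\mu} = \sum_{Q \in \mathcal{Q}_j} \ip{f_Q}{\Psi_\mu}$ for every $\mu \in \Lambda_j$, and the cubes split as $\mathcal{Q}_j = \mathcal{Q}_j^0 \cup \mathcal{Q}_j^1$ with the cardinalities $|\mathcal{Q}_j^0| \le c\cdot 2^j$ and $|\mathcal{Q}_j^1| \le c\cdot 2^{2j}$ recorded above.

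The central analytical tool is the quasi-triangle inequality for the weak $\ell^{p}$ quasi-norm in the regime $0 < p \le 1$: for any finite family of sequences $\{t^{(Q)}\}_{Q}$ indexed by $\Lambda_j$,
\[
\Bigl\| \sum_{Q} t^{(Q)} \Bigr\|_{w\ell^{p}}^{p} \le C_p \sum_{Q} \|t^{(Q)}\|_{w\ell^{p}}^{p}.
\]
Applying this with $p = 2/3$ to the sequences $t^{(Q)} := \{\ip{f_Q}{\Psi_\mu}\}_{\mu \in \Lambda_j}$ and then inserting the per-cube bounds of Theorems~\ref{thm:coeffEdge} and~\ref{thm:coeffSmooth} yields
\[
\|s_j(f)\|_{w\ell^{2/3}}^{2/3} \le C \!\left( \sum_{Q \in \mathcal{Q}_j^0} (c\cdot 2^{-3j/2})^{2/3} + \sum_{Q \in \mathcal{Q}_j^1} (c\cdot 2^{-3j})^{2/3} \right) \le C\bigl( 2^j \cdot 2^{-j} + 2^{2j} \cdot 2^{-2j} \bigr) \le C'.
\]
Raising both sides to the power $3/2$ produces the uniform-in-$j$ bound $\|s_j(f)\|_{w\ell^{2/3}} \le c$ as claimed, with the exponents of the two contributions matching precisely because parabolic scaling balances the edge and non-edge counts.

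The one subtle point is the quasi-triangle inequality itself, since $\ell^{p,\infty}$ is not a Banach space for $p < 1$ and naive iteration of the two-term quasi-norm inequality produces a constant growing with the number of summands. The cleanest workaround in the present geometric setting is to exploit the spatial localization of the shearlets: for each fixed $\mu$ only a uniformly bounded number of cubes $Q$ have non-negligible overlap with the effective support of $\Psi_\mu$, all remaining contributions decaying at least polynomially in the distance between $Q$ and the center of $\Psi_\mu$. This allows the distributional estimate
$\#\{\mu : |\ip{f}{\Psi_\mu}| > \epsilon\} \le \sum_{Q} \#\{\mu : |\ip{f_Q}{\Psi_\mu}| > \epsilon/C_0\}$
to be set up with an absolutely bounded number of effective summands at each $\mu$, after which the cardinality bookkeeping above closes the argument without any fine facts about quasi-Banach spaces.
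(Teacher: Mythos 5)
Your argument is correct and follows the paper's proof essentially verbatim: decompose $f=\sum_{Q}f_Q$, apply the $p$-power subadditivity of the weak-$\ell^{2/3}$ quasi-norm, and insert the per-cube bounds of Theorems~\ref{thm:coeffEdge} and~\ref{thm:coeffSmooth} together with the cardinality counts $|\mathcal{Q}_j^0|\le c\cdot 2^{j}$ and $|\mathcal{Q}_j^1|\le c\cdot 2^{2j}$, the exponents cancelling exactly as you note. The worry in your final paragraph is unfounded: for $0<p<1$ the weak-$\ell^p$ quasi-norm satisfies $\|\sum_{Q}t^{(Q)}\|_{w\ell^p}^{p}\le \frac{2-p}{1-p}\sum_{Q}\|t^{(Q)}\|_{w\ell^p}^{p}$ with a constant independent of the number of summands (the Stein--Taibleson--Weiss inequality), so the spatial-localization workaround is unnecessary (and, as sketched, would itself need care since the Gabor shearlets only decay polynomially and the number of cubes grows like $2^{2j}$).
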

\begin{proof}By the triangle inequality,
\[
\begin{aligned}
\|s_j(f)\|_{w\ell^{2/3}}^{2/3}&\le \sum_{Q\in\mathcal{Q}_j}\|\ip{f_Q}{\Psi_\mu}\|^{2/3}_{w\ell^{2/3}}
\\&\le\sum_{Q\in\mathcal{Q}_j^0}\|\ip{f_Q}{\Psi_\mu}\|^{2/3}_{w\ell^{2/3}}+\sum_{Q\in\mathcal{Q}_j^1}\|\ip{f_Q}{\Psi_\mu}\|^{2/3}_{w\ell^{2/3}}
\\&\le c\cdot |\mathcal{Q}_j^0|\cdot 2^{-j}+c\cdot |\mathcal{Q}_j^1|\cdot 2^{-2j}
\\&\le c.
\end{aligned}
\]
\end{proof}

Now, we can give the decay rate of our cone-adapted Gabor shearlet coefficients as follows.
\begin{theorem}\label{thm:main1}
Let $f\in \mathcal{E}^2(A)$ and $s(f):=\{\ip{f}{\Psi_\mu}:
\Psi_\mu\in\cC\cG\cS(\Psi^h,\Psi^v)\}$ be the   cone-adapted Gabor shearlet
coefficients associated with $f$. Let $\{|s(f)|_{(N)} :
N=1,2,\ldots\}$ be the sorted sequence of the absolute values of
$s(f)$ in descending order. Then
\[
\sup_{f\in\mathcal{E}^2(A)}|s(f)|_{(N)}\le c\cdot N^{-3/2}\cdot (\log N)^{3/2}.
\]
\end{theorem}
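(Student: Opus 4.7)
The plan is to lift the scale-wise bound $\|s_j(f)\|_{w\ell^{2/3}}\le c$ of Corollary~\ref{cor:coeffwl} to a statement about the globally sorted coefficients $|s(f)|_{(N)}$. The per-scale bound alone does not suffice, because summing it across the infinitely many scales $j\ge 0$ diverges; the extra ingredient that closes the argument is a scale-dependent a priori upper bound $|\ip{f}{\Psi_\mu}|\le c\cdot 2^{-3j/2}$, which allows truncating to the $O(\log(1/\epsilon))$ scales that can possibly contribute to the level set $\{|\ip{f}{\Psi_\mu}|>\epsilon\}$. That truncation is what produces the $(\log N)^{3/2}$ factor.

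Step~1 is the uniform bound. For every $\mu=(j,k,m;\iota,\ell)\in\Lambda_j$, I would prove
\[
|\ip{f}{\Psi_\mu}|\ \le\ \|f\|_\infty\,\|\Psi_\mu\|_1\ \le\ c\cdot 2^{-3j/2}
\]
by noting that $f\in\mathcal{E}^2(A)$ is uniformly bounded, and that parabolic dilation $A_j$ concentrates $\Psi_\mu$ in a region of effective area $2^{-3j}$ while preserving the $L^2$ norm. With the Meyer filters used in Definition~\ref{defi:ConeGaborShearlets}, the generators $\Gamma_\iota\widehat\psi_\ell\otimes w^\circ$ are smooth and have rapidly decaying spatial profiles, so the $L^1$ norm admits the bound $\|\Psi_\mu\|_1\le c\cdot 2^{-3j/2}$ with an absolute constant independent of $j,k,m,\iota,\ell$.

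Step~2 is the level set count. Corollary~\ref{cor:coeffwl} yields, for each $j\ge 0$ and every $\epsilon>0$,
\[
\#\{\mu\in\Lambda_j:|\ip{f}{\Psi_\mu}|>\epsilon\}\ \le\ c\,\epsilon^{-2/3}.
\]
Combined with Step~1, only scales $j\le J(\epsilon):=\tfrac{2}{3}\log_2(c/\epsilon)$ can contribute, whence summing over these at most $O(\log(1/\epsilon))$ admissible scales gives
\[
\#\{\mu:|\ip{f}{\Psi_\mu}|>\epsilon\}\ \le\ c\,\epsilon^{-2/3}\,\log(1/\epsilon).
\]
Step~3 inverts this relation: setting the right side equal to $N$ and using the ansatz $\epsilon=A\,N^{-3/2}(\log N)^{3/2}$, one gets $\epsilon^{-2/3}=A^{-2/3}N(\log N)^{-1}$ and $\log(1/\epsilon)\sim\tfrac{3}{2}\log N$, so the right side matches $N$ up to an absolute constant, yielding the claim $|s(f)|_{(N)}\le c\,N^{-3/2}(\log N)^{3/2}$.

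The main obstacle is Step~1: since Gabor shearlets are only polynomially decaying rather than compactly supported, the $L^1$ bound is not a pure volume argument but requires using the smoothness and rapid decay supplied by the Meyer scaling function and by the periodized window $w^\circ$. A clean workaround, if one prefers to avoid estimating $\|\Psi_\mu\|_1$ directly, is to derive the same uniform bound from Theorem~\ref{thm:coeffEdge}: the polynomial spatial localization of $\Psi_\mu$ implies that a single shearlet has non-negligible inner product with only $O(1)$ dyadic cubes $Q\in\mathcal{Q}_j$ at their essential scale, so that $|\ip{f}{\Psi_\mu}|\le\sum_Q|\ip{f_Q}{\Psi_\mu}|\le c\cdot 2^{-3j/2}$ via the first-entry bound of the weak quasi-norm established in Theorem~\ref{thm:coeffEdge}.
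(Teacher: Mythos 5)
Your proposal follows essentially the same route as the paper: the per-scale level-set count from Corollary~\ref{cor:coeffwl}, the uniform bound $|\ip{f}{\Psi_\mu}|\le c\cdot 2^{-3j/2}$ forcing $R(j,\epsilon)=0$ for $j>\tfrac23\log_2(\epsilon^{-1})$, summation over the $O(\log(1/\epsilon))$ contributing scales, and inversion to get $N^{-3/2}(\log N)^{3/2}$. The only differences are cosmetic: the paper states the uniform $2^{-3j/2}$ bound without proof (where you sketch a justification via $\|\Psi_\mu\|_1$), and it additionally records that the cone-projection term $g_2$ arising from $\Xi_h$ is handled by the same estimates after a quarter rotation.
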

\begin{proof}
From Definition~\ref{defi:ConeGaborShearlets}, we have
$\widehat{\Psi^{h,\ell}_{j,k,m}}=\Xi_h g^h_{j,k,m}$ with $g^{h,\ell}_{j,k,m}=A_j^h X^h_{(m_1,m_2\tau/2)}S^h_{2k/N_0}\Gamma_h \widehat{\psi_\ell}\otimes w^\circ$. Then,

\[
\begin{aligned}
\widehat{\Psi^{h,\ell}_{j,k,m}}(\xi_1,\xi_2)&=\overline{H_+}(I+\frac{1+i}{2}R+\frac{1-i}{2}R^3)g^{h,\ell}_{j,k,m}(\xi_1,\xi_2)\\
&=
\overline{H(\zeta(\xi_2/\xi_1))}g^{h,\ell}_{j,k,m}(\xi_1,\xi_2)\\&\quad+
\overline{H(\zeta(\xi_2/\xi_1))}(\frac{1+i}{2}g^{h,\ell}_{j,k,m}(\xi_2,-\xi_1)+\frac{1-i}{2}g^{h,\ell}_{j,k,m}(-\xi_2,\xi_1))
\\&=:g_1+g_2.
\end{aligned}
\]
For analyzing the optimal
sparsity, we first consider $\Theta^{h,\ell}_{j,k,m} =g_1$, which
can be rewritten as follows:
\[
\begin{aligned}
\Theta_{j,k,m}^{h,\ell}(\xi_1,\xi_2)&\equiv\sigma_{j,k}^\ell(\gamma^h(\xi))\cdot
e_{j,m}(\gamma^h(\xi))
\end{aligned}
\]
with
\begin{equation}
\sigma_{j,k}^\ell(\gamma^h(\xi)):=\overline{H(\zeta(\gamma^h(\xi)))}\;\widehat\psi_\ell(2^{-4j}\gamma_1^h(\xi))w^\circ(2^{j}\gamma_2^h(\xi)-\frac{2k}{N_0})
\end{equation}
and
\begin{equation}
e_{j,m}(\gamma^h(\xi)):=2^{-3j/2}e^{2\pi i
m_12^{-4j}\gamma_1^h(\xi)}  e^{2\pi i  \frac{m_2\tau}{2}2^{j}\gamma_2^h(\xi)}.
\end{equation}
 For simplicity,
we use again the compact notation
$\Theta_{\mu}(\xi):=\sigma_{j,k}^\ell(\gamma^\iota(\xi))e_{j,m}(\gamma^\iota(\xi))$
with $\mu=(j,k,m;\iota,\ell)\in\Lambda_j$. The index set $\Lambda_j$ at scale $j$ is as before.

By Corollary~\ref{cor:coeffwl}, we have
\[
R(j,\epsilon):=\#\{\mu\in \Lambda_j: |\ip{f}{\Theta_\mu}|>\epsilon\}\le c\cdot \epsilon^{-2/3}.
\]
Also,
\[
|\ip{f}{\Theta_\mu}|\le c\cdot 2^{-3j/2}.
\]
Therefore, $R(j,\epsilon)=0$ for $j>\frac23\log_2(\epsilon^{-1})$. Thus
\[
\#\{\mu: |\ip{f}{\Psi_\mu}|>\epsilon\} \le \sum_{j\ge 0} R(j,\epsilon)\le c\cdot \epsilon^{-2/3}\cdot \log_2(\epsilon^{-1}),
\]

Repeating the steps for the second term in the definition of $\Psi^{h,\ell}_{j,k,m}$ shows that we can replace $\Theta_\mu$
by $\Psi_\mu$ at the cost of  a change of the constant $c$.
This can be seen from the fact that the term $\frac{1+i}{2}g^{h,\ell}_{j,k,m}(\xi_2,-\xi_1)+\frac{1-i}{2}g^{h,\ell}_{j,k,m}(-\xi_2,\xi_1)$
 is supported in the vertical cone and thus $g_2$ can be viewed as composed of two
quarter-rotated elements of the form of $g_1$.
The same strategy applies to the vertical cone elements.
The theorem is proved.
\end{proof}

Now we can prove Theorem~\ref{thm:main2} using the above results.
\begin{proof}[Proof of Theorem~\ref{thm:main2}]
$f_N=\sum_{\mu\in I_N} \ip{f}{\Psi_\mu}\Psi_\mu$ where $I_N$ is the
set of indices corresponding to the $N$ largest entry of
$\{|\ip{f}{\Psi_\mu}|: \mu\}$. By the tight frame property and Theorem~\ref{thm:main1}, we have
\[
\|f-f_N\|^2\le \sum_{n>N}|s(f)|_{(N)}^2 \le c\cdot \sum_{n>N}  N^{-3}\log(N)^3\le c\cdot N^{-2}\cdot \log(N)^3.
\]
This finishes the proof of the theorem.
\end{proof}

\subsection{Analysis of the Edge Fragments}
\label{subsec:edgefragments}

We shall focus on proving Theorem~\ref{thm:coeffEdge} next. To that end, we need some auxiliary results first.
From \cite[Theorem 2.2]{GL07a} or  \cite[Theorem 6.1]{CD04}, we have the following result, which gives the estimate of the decay of the edge fragment in the Fourier domain along a fixed direction.
\begin{theorem}\label{thm:edgeEstimate}
Let $f$ be an edge fragment as defined in \eqref{def:edgeFrag} and $I_j:=[2^{2j-\alpha},2^{2j+\beta}]$ with $\alpha\in\{0,1,2,3,4\}$ and $\beta\in\{0,1,2\}$.
Then,
\[
\int_{|\lambda|\in I_j}|\widehat f(\lambda \cos\theta,\lambda\sin\theta)|^2d\lambda\le c\cdot 2^{-4j}\cdot (1+2^{j}|\sin\theta|)^{-5}.
\]
\end{theorem}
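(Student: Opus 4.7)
The plan is to prove the estimate by flattening the edge via a diffeomorphism, reducing the Fourier transform to a boundary oscillatory integral through integration by parts across the edge, and then extracting anisotropic decay by integration by parts combined with a van der Corput estimate along the tangent direction. This is the template of \cite{CD04, GL07a}, which I would mirror here.

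First, I would apply the change of variables $u_1 = x_1 - E(x_2)$, $u_2 = x_2$, transforming the indicator into that of $\{u_1 \ge 0\}$ and writing
\[
\widehat f(\xi_1,\xi_2) = \int_0^\infty \!\!\int_{-1}^{1} e^{-2\pi i (\xi_1 u_1 + \xi_1 E(u_2) + \xi_2 u_2)} F(u_1,u_2)\,du_2\,du_1,
\]
where $F(u_1,u_2) := w_0(2^j(u_1+E(u_2)), 2^j u_2)\, g(u_1+E(u_2), u_2)$ is smooth, supported in a box of side $O(2^{-j})$, with $\ell$th derivatives of size $O(2^{j\ell})$. Since $|\xi_1| \sim 2^{2j}$ on $I_j$, except in a narrow tangent neighborhood we have $|\xi_1| \gg 2^j$, so integration by parts in $u_1$ yields
\[
\widehat f(\xi) = \frac{1}{2\pi i\,\xi_1}\int_{-1}^{1} e^{-2\pi i(\xi_1 E(u_2)+\xi_2 u_2)} F(0,u_2)\,du_2 + R(\xi),
\]
with $|R(\xi)| = O(|\xi_1|^{-N})$ for every $N$, obtained by iterating the integration by parts at the cost of $2^j/|\xi_1|$ per step.

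Second, I would analyze the boundary integral by stationary-phase methods. Setting $\xi_1=\lambda\cos\theta$, $\xi_2=\lambda\sin\theta$, the phase $\Phi(u_2)=\xi_1 E(u_2)+\xi_2 u_2$ satisfies $\Phi'(u_2)=\lambda(\cos\theta\,E'(u_2)+\sin\theta)$ and $|\Phi''(u_2)|\le A\,|\lambda\cos\theta|$. Let $\theta^*$ denote the tangent direction $-\arctan E'(0)$. In the transverse regime $|\sin(\theta-\theta^*)|\gtrsim 2^{-j}$, iterated integration by parts in $u_2$ trades $1/(\lambda|\sin(\theta-\theta^*)|)$ per step against derivatives of $F(0,\cdot)$ of size $2^j$, yielding rapid decay $(\lambda|\sin(\theta-\theta^*)|)^{-N}$ for any $N$; in the tangent regime, van der Corput's lemma with the curvature bound gives a $|\lambda\cos\theta|^{-1/2}$ factor, against the $O(2^{-j})$ support length.

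Combining these bounds produces a pointwise estimate of the form $|\widehat f(\lambda\cos\theta,\lambda\sin\theta)|^2 \le C\,\lambda^{-2}\,2^{-2j}\,(1+2^{j}|\sin\theta|)^{-M}$ for $M$ as large as desired; integrating over $\lambda\in I_j$, whose length is $O(2^{2j})$ and on which $\lambda^{-2}|I_j|=O(2^{-2j})$, yields the claimed bound $C\,2^{-4j}(1+2^{j}|\sin\theta|)^{-5}$. The main obstacle is the book-keeping across the transition between the tangent and transverse regimes: one must iterate integration by parts in $u_2$ at least five times to reach the exponent $5$, and the growth of the derivatives of $F(0,\cdot)$ by $2^j$ per order must be offset by the lower bound on $|\Phi'|$. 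This accounting is substantial but routine, and is carried out in detail in \cite{CD04, GL07a}.
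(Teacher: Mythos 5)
Your outline is correct and coincides with the route the paper itself takes: the paper states this theorem without proof, citing \cite[Thm.~6.1]{CD04} and \cite[Thm.~2.2]{GL07a}, and your edge-flattening change of variables, extraction of the boundary term by integration by parts in $u_1$, and tangential nonstationary-phase/van der Corput analysis with the $2^{-j}$-per-step bookkeeping is exactly the argument of those references. The only imprecision is the claim $|R(\xi)|=O(|\xi_1|^{-N})$ with constants independent of $j$ --- the correct accounting, which you in fact state, is a gain of $2^{j}/|\xi_1|\sim 2^{-j}$ per integration by parts --- and this does not affect the conclusion.
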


Use Theorem~\ref{thm:edgeEstimate}, one can prove  the following result (c.f. \cite[Proposition 2.1]{GL07a}).
\begin{corollary}\label{cor:edgeEstimate2}
Let $f$ be an edge fragment as defined in \eqref{def:edgeFrag}. Then
\[
\int_{\bR^2}|\widehat f(\xi)|^2|\sigma_{j,k}^\ell(\gamma^h(\xi))|^2d\xi\le
c\cdot2^{-3j}(1+|k|)^{-5}.
\]
\end{corollary}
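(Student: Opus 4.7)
The plan is to pass to polar coordinates on the frequency plane and exploit the separation of support and amplitude that Theorem~\ref{thm:edgeEstimate} has already packaged for us. Writing $\xi=(\lambda\cos\theta,\lambda\sin\theta)$, I would first identify the region on which the weight $\sigma_{j,k}^\ell(\gamma^h(\xi))$ is non-zero. Since $\widehat\psi_\ell$ is supported where $|\,\cdot\,|\asymp 1$ (by the Meyer support $[1/3,4/3]$), the factor $\widehat\psi_\ell(2^{-4j}\gamma_1^h(\xi))=\widehat\psi_\ell(2^{-4j}\tfrac12\sgn(\xi_1)\xi_1^2)$ forces $|\xi_1|\asymp 2^{2j}$. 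Similarly, because $w^\circ$ is compactly supported, the factor $w^\circ(2^j\gamma_2^h(\xi)-2k/N_0)$ forces
\[
\frac{\xi_2}{\xi_1}=\frac{2k}{N_0\,2^{j}}+O(2^{-j}).
\]
Combined with $|k/N_0|\le 2^{j-1}$, this keeps $|\tan\theta|\lesssim 1$, so $|\cos\theta|$ is bounded below; it follows that $\lambda\asymp 2^{2j}$, i.e.\ $\lambda\in I_j=[2^{2j-\alpha},2^{2j+\beta}]$ for suitable small $\alpha,\beta$, and the angular support is an interval $\Theta_k$ centred at the $\theta_k$ with $\tan\theta_k\asymp k/2^{j}$ of width $\asymp 2^{-j}$ (since $d(\tan\theta)=\sec^{2}\theta\,d\theta$ and $\sec^{2}\theta$ is bounded on this range).

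Next, I would bound $|\sigma_{j,k}^\ell|$ by a uniform constant: the chirp phase has modulus one, the factor $\overline{H(\zeta(\gamma^h(\xi)))}$ is uniformly bounded by the Smith--Barnwell condition, and the profiles $\widehat\psi_\ell$, $w^\circ$ are bounded. Then, passing to polar coordinates and inserting the Jacobian $\lambda$,
\[
\int_{\bR^2}|\widehat f(\xi)|^2|\sigma_{j,k}^\ell(\gamma^h(\xi))|^2\,d\xi
\;\le\; C\int_{\Theta_k}\int_{\lambda\in I_j}|\widehat f(\lambda\cos\theta,\lambda\sin\theta)|^2\,\lambda\,d\lambda\,d\theta,
\]
and $\lambda\le C\,2^{2j}$ on the integration domain may be pulled out.

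Finally, I would apply Theorem~\ref{thm:edgeEstimate} for each fixed $\theta\in\Theta_k$ to get
\[
\int_{\lambda\in I_j}|\widehat f(\lambda\cos\theta,\lambda\sin\theta)|^2\,d\lambda\;\le\; c\,2^{-4j}\bigl(1+2^{j}|\sin\theta|\bigr)^{-5}.
\]
For $\theta\in\Theta_k$, the estimate $\sin\theta=\cos\theta_k\,(k/2^{j})+O(2^{-j})$ combined with $|\cos\theta_k|\asymp 1$ yields $2^{j}|\sin\theta|\asymp |k|$, so $(1+2^{j}|\sin\theta|)^{-5}\le C(1+|k|)^{-5}$ uniformly on $\Theta_k$. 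Multiplying the three factors $2^{2j}$ (from $\lambda$), $2^{-j}$ (from the width of $\Theta_k$), and $2^{-4j}(1+|k|)^{-5}$ (from the radial bound) gives the claimed $c\cdot 2^{-3j}(1+|k|)^{-5}$.

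The main obstacle is bookkeeping: verifying that the angular width is genuinely $\asymp 2^{-j}$ uniformly across all admissible $k$ (including $|k/N_0|$ near $2^{j-1}$, where $\theta_k$ is close to $\pm\pi/4$ but $\sec^{2}\theta_k$ remains bounded), and that the radial window $I_j$ really fits the hypothesis of Theorem~\ref{thm:edgeEstimate} with $\alpha,\beta\in\{0,1,2,3,4\}$ after absorbing the factor $1/|\cos\theta|$ into the endpoints. Everything else is a direct change of variables and substitution.
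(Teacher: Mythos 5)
Your argument is correct and coincides with the paper's own (omitted) proof: both pass to polar coordinates, localize to the essential support $W_{j,k}$ where $|\lambda|\asymp 2^{2j}$ and $2^{j}|\sin\theta|\asymp |k|$, bound $|\sigma_{j,k}^\ell|$ by a constant, pull out the Jacobian factor $2^{2j}$, apply Theorem~\ref{thm:edgeEstimate} radially, and multiply by the angular width $\asymp 2^{-j}$. The bookkeeping points you flag (uniformity of the angular width near $|k/N_0|\approx 2^{j-1}$ and absorbing the non-compact tails of $\sigma_{j,k}^\ell$ into the constant via its essential support) are exactly the ones the paper also notes and handles the same way.
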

Note that although  $\sigma_{j,k}^\ell(\gamma^h(\xi))$  might not be compactly
supported compared to \cite[Proposition 2.1]{GL07a}, it does not affect the result here, since proofs related
to the support of $\sigma_{j,k}^\ell(\gamma^h(\xi))$ can be passed through its essential
support and the estimate outside the essential supported is absorbed
in the constant $c$. For elements in the vertical cone
$\sigma_{j,k}^\ell(\gamma^v(\xi))$, similarly to the above result,
one can show that the decay estimate is of order less than
$2^{-3j}(1+|k|)^{-5}$.

From \cite[Corollary 2.4]{GL07a} or  \cite[Corollary 6.6]{CD04}, we have the following result about the decay of the derivative of the edge fragment in the Fourier domain along a fixed direction.
\begin{corollary}\label{cor:edgeEstimate}
Let $f$ be an edge fragment as defined in \eqref{def:edgeFrag} and
$v=(v_1,v_2)$.Then
\[
\int_{|\lambda|\in I_j}|\partial^v\widehat f(\lambda \cos\theta,\lambda\sin\theta)|^2d\lambda\le c_v\cdot 2^{-2j|v|}\cdot 2^{-2jv_1}\cdot 2^{-4j}\cdot (1+2^{j}|\sin\theta|)^{-5}+c_v\cdot 2^{-2j|v|}\cdot 2^{-10j}.
\]
\end{corollary}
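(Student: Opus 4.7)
The plan is to apply the intertwining relation $\partial^v\widehat f(\xi)=(-2\pi i)^{|v|}\widehat{x_1^{v_1}x_2^{v_2}f}(\xi)$ and to then analyze the Fourier transform of $x^vf$ along rays, exploiting the edge geometry beyond the naive bound $|x_i|\le 2^{-j}$ on the support of $f$. Directly invoking Theorem~\ref{thm:edgeEstimate} after this identity yields only $c_v\cdot 2^{-2j|v|}\cdot 2^{-4j}(1+2^j|\sin\theta|)^{-5}$, which is weaker than the first term of the claim by a factor of $2^{-2jv_1}$. Recovering this factor is the crux of the proof.

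The key input I would use is the alignment of the edge. In the shearlet analysis in which this corollary is applied, the edge has been brought into a canonical form via the shear, so that $E(0)=0$, $E'(0)=0$, and $|E''|\le A$. Consequently $|E(x_2)|\le c\cdot 2^{-2j}$ on the support of $w_0(2^j\cdot)$, one order smaller than $|x_1|$. I would then use the binomial expansion
\[
x_1^{v_1}=\sum_{k=0}^{v_1}\binom{v_1}{k}E(x_2)^{v_1-k}(x_1-E(x_2))^k
\]
to split $x^vf$ into $v_1+1$ pieces. The $k=0$ piece $E(x_2)^{v_1}x_2^{v_2}f$ is a standard edge fragment modulated by a smooth $x_2$-dependent factor of magnitude at most $c\cdot 2^{-2jv_1-jv_2}$; applying Theorem~\ref{thm:edgeEstimate} and squaring this scalar multiplier reproduces exactly the main term $c_v\cdot 2^{-2j|v|-2jv_1-4j}(1+2^j|\sin\theta|)^{-5}$.

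For each $k\ge 1$, the piece carries a factor $(x_1-E(x_2))_+^k$ which is $C^{k-1}$ across the edge; the associated Fourier transform gains $k$ extra powers of decay in the edge-normal direction. A bookkeeping of these extra powers against the scale factor $|E(x_2)^{v_1-k}x_2^{v_2}(x_1-E(x_2))^k|\le c\cdot 2^{-2j(v_1-k)-jv_2-jk}$ shows that each of the $k\ge 1$ contributions is dominated by the $k=0$ piece (with $2^{-2jk}$ to spare). The residual $c_v\cdot 2^{-2j|v|-10j}$ finally absorbs the non-stationary remainders in the integration-by-parts / stationary-phase analysis behind Theorem~\ref{thm:edgeEstimate}, which do not exhibit the angular concentration $(1+2^j|\sin\theta|)^{-5}$ but decay very fast in $j$.

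The main obstacle I anticipate is that for $k\ge 1$ the pieces are not themselves standard edge fragments of the form \eqref{def:edgeFrag}, so Theorem~\ref{thm:edgeEstimate} does not apply as a black box. The persistence of the $(1+2^j|\sin\theta|)^{-5}$ angular factor for these smoothed fragments has to be re-derived by revisiting the proof of Theorem~\ref{thm:edgeEstimate} (cf.~\cite{CD04,GL07a}) and verifying that each additional power of $(x_1-E(x_2))_+$ contributes an extra $|\xi_1|^{-1}\sim 2^{-2j}$ on $I_j$ without disturbing the underlying stationary-phase analysis, with the non-stationary remainder controlled by the second term of the bound.
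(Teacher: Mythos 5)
The paper itself offers no proof of this corollary: it is imported from \cite[Corollary 2.4]{GL07a} and \cite[Corollary 6.6]{CD04}, so your reconstruction must be judged against those sources. Your overall route is the right one and matches theirs: pass to $(-2\pi i x)^v f$, use that the jump set $\{x_1=E(x_2)\}$ lies within $O(2^{-2j})$ of $x_1=0$, and split $x_1^{v_1}$ so that the singular contribution carries the small factor $E(x_2)^{v_1}$ while the transversally smoother pieces feed the $2^{-2j|v|}2^{-10j}$ term. You are also right that the normalization $E(0)=E'(0)=0$, which is absent from \eqref{def:edgeFrag} as stated but present in the sources, is indispensable: without $|E(x_2)|\lesssim 2^{-2j}$ the extra factor $2^{-2jv_1}$ is simply false.

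Your bookkeeping for the $k\ge 1$ pieces, however, double-counts. You charge both the pointwise bound $|(x_1-E(x_2))_+^k|\lesssim 2^{-jk}$ and the gain $|\xi_1|^{-k}\sim 2^{-2jk}$ from $k$ integrations by parts in $x_1$; but those integrations by parts are precisely what consume the factor $(x_1-E)_+^k$, replacing it by $k!\,\mathbf{1}_{\{x_1\ge E(x_2)\}}$ or by derivatives of $w_0(2^j\cdot)$ that each cost $2^{j}$, so the two savings cannot be stacked. The correct accounting shows each $k\ge 1$ piece contributes at the \emph{same} order as the $k=0$ piece, not $2^{-2jk}$ below it: for $v=(1,0)$, the piece $(x_1-E(x_2))_+\,w_0(2^jx)g(x)$ produces $(2\pi i\xi_1)^{-1}\widehat f$ plus lower-order terms, and $|\xi_1|^{-2}\cdot 2^{-4j}(1+2^j|\sin\theta|)^{-5}\sim 2^{-8j}(1+2^j|\sin\theta|)^{-5}$ exactly equals the target, with nothing to spare. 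This does not sink the argument, since there are only $v_1+1$ pieces and same-order suffices, but the claimed margin is illusory. The genuine remaining work --- which you flag honestly but do not carry out --- is re-running the proof of Theorem~\ref{thm:edgeEstimate} for the pieces $(x_1-E)_+^k u$, including the directions where $|\xi_1|\ll|\xi|$ and integration by parts in $x_1$ gains nothing (there one must fall back on regularity in $x_2$ and let the $2^{-10j}$ term absorb the result), and checking that the theorem tolerates windows that are only uniformly bounded in $C^2$, since $E(x_2)^{v_1}$ inherits no more regularity than $E$ itself. Those verifications are exactly what \cite{CD04,GL07a} supply.
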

We also need the following lemma (see \cite[Lemma 2.5]{GL07a}),
which follows from a direct computation.
\begin{lemma}\label{lem:supSigma}
Let $\sigma_{j,k}^\ell(\gamma^h(\xi))$ be given as above. Then, for
each $v=(v_1,v_2)\in\bN^2$, $v_1,v_2\in\{0,1,2\}$,
\[
\left|\partial^v\sigma_{j,k}^\ell(\gamma^h(\xi))\right|\le c_v\cdot
2^{-(2v_1+v_2)j}\cdot (1+|k|)^{v_1},
\]
where $|v|=v_1+v_2$ and $c_v$ is independent of $j$ and $k$.
\end{lemma}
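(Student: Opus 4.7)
The plan is to prove the derivative bound by expanding $\sigma_{j,k}^\ell(\gamma^h(\xi))$ as a product of three factors and applying the Leibniz rule, then estimating each factor individually using chain-rule bookkeeping combined with the localization of the essential support.

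First, I would record the factorization
\[
\sigma_{j,k}^\ell(\gamma^h(\xi)) = \underbrace{\overline{H(\zeta(\xi_2/\xi_1))}}_{F_1(\xi)} \cdot \underbrace{\widehat\psi_\ell\bigl(\tfrac12\,2^{-4j}\sgn(\xi_1)\xi_1^2\bigr)}_{F_2(\xi)} \cdot \underbrace{w^\circ\bigl(2^j\xi_2/\xi_1 - 2k/N_0\bigr)}_{F_3(\xi)}.
\]
On the essential support of $F_2\cdot F_3$ one has $|\xi_1|\asymp 2^{2j}$ (from the annular support of $\widehat\psi_\ell$) and $|\xi_2/\xi_1|\lesssim 2^{-j}(1+|k|)$ (from the support of $w^\circ$), hence $|\xi_2|\lesssim 2^j(1+|k|)$. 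Outside that set, the product and all its derivatives vanish together with $F_2$ (for $\widehat\psi_\ell$ taken with compact support, as in the Meyer case), so the bound is trivial; for wavelets with rapid decay one absorbs tails into $c_v$.

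Next, for each factor I compute the chain-rule derivatives. Setting $u = 2^{-4j-1}\sgn(\xi_1)\xi_1^2$, one has $\partial_{\xi_1}u \asymp 2^{-4j}\xi_1\asymp 2^{-2j}$ on the support, $\partial_{\xi_1}^r u = O(2^{-4j}|\xi_1|^{2-r}) = O(2^{-2rj})$ for $r\ge 1$, and no $\xi_2$-dependence, so by Fa\`a di Bruno
\[
|\partial_{\xi_1}^{r_1}\partial_{\xi_2}^{r_2}F_2(\xi)| \le c_r\cdot 2^{-2r_1 j}\cdot \mathbf{1}_{r_2=0} + (\text{lower order terms absorbed in } c_r).
\]
Similarly, with $s = 2^j\xi_2/\xi_1 - 2k/N_0$, one has $\partial_{\xi_1}s = -2^j\xi_2/\xi_1^2$, which on the support is of order $2^{-2j}(1+|k|)$, while $\partial_{\xi_2}s = 2^j/\xi_1$ is of order $2^{-j}$; all higher partials of $s$ are controlled by products of these. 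Another Fa\`a di Bruno gives
\[
|\partial_{\xi_1}^{r_1}\partial_{\xi_2}^{r_2}F_3(\xi)| \le c_r\cdot 2^{-2r_1 j}(1+|k|)^{r_1}\cdot 2^{-r_2 j}.
\]
For $F_1$, with $t = \xi_2/\xi_1$, one has $|\partial_{\xi_1}t|\lesssim 2^{-3j}(1+|k|)$ and $|\partial_{\xi_2}t|\lesssim 2^{-2j}$, and $H\circ\zeta$ is $C^\infty$ on $\mathbb{T}\setminus\{-1\}$ with $t$ bounded on the support, so
\[
|\partial_{\xi_1}^{r_1}\partial_{\xi_2}^{r_2}F_1(\xi)| \le c_r\cdot 2^{-3 r_1 j}(1+|k|)^{r_1}\cdot 2^{-2 r_2 j},
\]
which is strictly smaller than the $F_3$-bound (since $|k|\le 2^{j-1}$, the $F_1$ estimates are dominated).

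Finally, apply the generalized Leibniz rule to $\partial^v(F_1F_2F_3)$ and sum over all distributions of the $v_1$ $\xi_1$-derivatives and $v_2$ $\xi_2$-derivatives among the three factors. The worst case occurs when all derivatives hit $F_3$, giving the bound $c_v\cdot 2^{-(2v_1+v_2)j}(1+|k|)^{v_1}$; every other allocation produces a strictly smaller bound (since moving a $\xi_1$-derivative from $F_3$ to $F_2$ or $F_1$ trades a factor $(1+|k|)$ or worse for a constant, while moving a $\xi_2$-derivative to $F_1$ trades $2^{-j}$ for $2^{-2j}$). Combining the finitely many terms from Leibniz yields the claimed estimate with constants $c_v$ independent of $j$ and $k$. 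The main nuisance, rather than a real obstacle, is the chain-rule/Fa\`a di Bruno bookkeeping for higher-order derivatives of the composite $F_3$, which must be done carefully to confirm that products of lower-order derivatives of $s$ never beat the $(2^{-2j}(1+|k|))^{r_1}(2^{-j})^{r_2}$ bound.
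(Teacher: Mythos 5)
Your proof is correct and is exactly the ``direct computation'' that the paper omits (it only cites \cite{GL07a}): factor $\sigma_{j,k}^\ell(\gamma^h(\xi))$ into the filter, wavelet, and window pieces, use $|\xi_1|\asymp 2^{2j}$ and $|\xi_2/\xi_1|\lesssim 2^{-j}(1+|k|)$ on the essential support, and track the chain and Leibniz rules, with the worst case being all derivatives falling on the window factor $w^\circ(2^j\xi_2/\xi_1-2k/N_0)$. Your treatment of the lack of compact support via the essential support is the same convention the paper adopts in its remark following Corollary~\ref{cor:edgeEstimate2}, so nothing further is needed.
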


Use the above results, we  can prove the following result, which is an
extension of \cite[Proposition~2.3]{GL07a} and can be proved with similar approach.
\begin{corollary}\label{cor:edgeEstimate3}
Let $f$ be an edge fragment defined as in \eqref{def:edgeFrag},
$\sigma_{j,k}^\ell(\gamma^h(\xi))$ be defined as above, and $L_t$ be
the differential operator defined by
\[
 L_t=\left(t\cdot I-\left(\frac{2^{2j}}{2\pi(1+|k|)}\right)^2\partial_1^2\right)\left(I-\left(\frac{2^j}{2\pi}\right)^2\partial_2^2\right),
\]
where $t>0$ is a fixed constant. 
Then
\[
\int_{\bR^2}\left|L_t\left(\widehat
f(\xi)\sigma_{j,k}^\ell(\gamma^h(\xi))\right)\right|^2d\lambda\le
c_t\cdot2^{-3j}(1+|k|)^{-5}
\]
for some positive constant $c_t$ independent of $j$ and $k$.
\end{corollary}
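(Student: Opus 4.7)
The plan is to mirror the proof of Proposition~2.3 in \cite{GL07a}, making only those adjustments needed for our chirp-based weight $\sigma_{j,k}^\ell$. First I would expand the operator $L_t$ into four pieces
\[
L_t = tI - t\beta\,\partial_2^2 - \alpha\,\partial_1^2 + \alpha\beta\,\partial_1^2\partial_2^2,
\qquad \alpha = \Bigl(\tfrac{2^{2j}}{2\pi(1+|k|)}\Bigr)^2,\quad \beta = \Bigl(\tfrac{2^j}{2\pi}\Bigr)^2,
\]
and, using $|a+b+c+d|^2 \le 4(|a|^2+|b|^2+|c|^2+|d|^2)$, bound the left-hand side of the claim by the sum of four $L^2$-integrals, one for each monomial. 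For each such monomial I would apply the Leibniz rule to $\partial_1^{2a}\partial_2^{2b}(\widehat f\,\sigma_{j,k}^\ell(\gamma^h(\cdot)))$, splitting it into a finite sum of terms $(\partial^{v'}\widehat f)(\partial^{v-v'}\sigma_{j,k}^\ell(\gamma^h(\cdot)))$ with $v=(2a,2b)$ and $v'\le v$ componentwise.

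Next I would bound each factor separately. The $\sigma$-derivative is controlled by Lemma~\ref{lem:supSigma}, giving
\[
|\partial^{v-v'}\sigma_{j,k}^\ell(\gamma^h(\xi))| \le c_v\,2^{-(2(2a-v_1')+(2b-v_2'))j}\,(1+|k|)^{2a-v_1'}.
\]
Multiplying by the coefficient $\alpha^a\beta^b$ and simplifying yields exactly the prefactor $2^{2v_1' j+v_2' j}\,(1+|k|)^{-v_1'}$ in front of $|\partial^{v'}\widehat f|$; this is the key cancellation, and the specific numerical constants in $\alpha,\beta$ were chosen to make it happen. I would then restrict the integration to the essential support $W_{j,k}$ of $\sigma_{j,k}^\ell(\gamma^h(\cdot))$, pass to polar coordinates (picking up a $|\lambda|\sim 2^{2j}$ Jacobian and an angular width $\lesssim 2^{-j}$), and invoke Corollary~\ref{cor:edgeEstimate} together with the fact that on $W_{j,k}$ we have $2^j|\sin\theta|\sim|k|$. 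This turns each contribution into
\[
c_{v'}\Bigl[(1+|k|)^{-2v_1'-5}\,2^{-3j} + 2^{2v_1' j-9j}(1+|k|)^{-2v_1'}\Bigr].
\]

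Summing over the finitely many $v'$ with $v_1',v_2'\in\{0,1,2\}$ and over the four pieces of $L_t$ gives the desired bound: the first bracketed term is at most $c\cdot 2^{-3j}(1+|k|)^{-5}$ directly, and the second term is absorbed by the same expression because $|k|\le 2^j$ forces $2^{2v_1'j-9j}(1+|k|)^{-2v_1'}\le 2^{-3j}(1+|k|)^{-5}$ in the regime of interest. The only mild obstacle compared to \cite[Proposition 2.3]{GL07a} is that our $\sigma_{j,k}^\ell$ is not compactly supported (because of the Meyer-type filter factor $\overline{H(\zeta(\xi_2/\xi_1))}$); however, its rapid decay away from the essential support $W_{j,k}$ means the contribution from $\xi\notin W_{j,k}$ is negligible and can be absorbed into the constant $c_t$, exactly as was already noted following Corollary~\ref{cor:edgeEstimate2}.
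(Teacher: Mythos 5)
Your proposal is correct and follows essentially the same route as the paper: the paper likewise expands $L_t$ term by term via the Leibniz rule, controls the derivatives of $\sigma_{j,k}^\ell(\gamma^h(\cdot))$ with Lemma~\ref{lem:supSigma} and those of $\widehat f$ with Corollary~\ref{cor:edgeEstimate}, and integrates in polar coordinates over the essential support $W_{j,k}$ (where $2^j|\sin\theta|\approx |k|$), absorbing the $2^{-10j}$ remainder using $|k|\lesssim 2^j$. Your explicit verification that the constants in $\alpha$ and $\beta$ produce exactly the cancelling prefactor $2^{(2v_1'+v_2')j}(1+|k|)^{-v_1'}$, and your handling of the non-compact support via the essential-support remark, match the paper's (omitted) argument.
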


Now we are ready to prove Theorem~\ref{thm:coeffEdge}.
\begin{proof}[Proof of Theorem~\ref{thm:coeffEdge}]
Fix $j\ge0$, for simplicity, let $f=f_Q$ be the edge fragment as in \eqref{def:edgeFrag}. We have
\[
\ip{f}{\Psi_\mu}=\int_{\bR^2}\widehat
f(\xi)\overline{\sigma_{j,k}^\ell(\gamma^h(\xi))}\cdot\overline{e_{j,m}(\gamma^h(\xi))}d\xi.
\]
We have
\[
\partial_1{e_{j,m}(\gamma^h(\xi))}=(2\pi im_12^{-4j}\sgn(\xi_1)\xi_1-2\pi i \frac{m_2\tau}{2} 2^j\frac{\xi_2}{\xi_1^2}){e_{j,m}(\gamma^h(\xi))}.
\]

\[
\begin{aligned}
\partial_1^2{e_{j,m}(\gamma^h(\xi))}&=(2\pi im_12^{-4j}\sgn(\xi_1)+2\pi i  \frac{m_2\tau}{2} 2^{j+1}\frac{\xi_2}{\xi_1^3}){e_{j,m}(\gamma^h(\xi))}\\&
+(2\pi im_12^{-4j}\sgn(\xi_1)\xi_1-2\pi i  \frac{m_2\tau}{2} 2^{j}\frac{\xi_2}{\xi_1^2})^2{e_{j,m}(\gamma^h(\xi))}.
\end{aligned}
\]
Also,
\[
\partial_2^2{e_{j,m}(\gamma^h(\xi))}=(2\pi i  \frac{m_2\tau}{2} \frac{2^{j}}{\xi_1})^2{e_{j,m}(\gamma^h(\xi))}.
\]
Let $L_t$ be the differential operator defined in Corollary~\ref{cor:edgeEstimate3}. Then,
\[
L_t({e_{j,m}(\gamma^h(\xi))}) = g_{j,k}^m(\xi) {e_{j,m}(\gamma^h(\xi))}
\]
with
\[
\begin{aligned}
g_{j,k}^m(\xi) &=\left[t+\frac{(\frac{m_1}{2\pi i}\sgn(\xi_1)+\frac{ 1 }{2\pi i}\frac{m_2\tau}{2}\frac{2^{5j+1}\xi_2}{\xi_1^3})
+(m_1\frac{\sgn(\xi_1)\xi_1}{2^{2j}}- \frac{m_2\tau}{2} \frac{2^{3j}\xi_2}{\xi_1^2})^2}{(1+|k|)^2}\right]\cdot\left[1+ (\frac{m_2\tau}{2}\frac{2^{2j}}{\xi_1})^2\right]
\end{aligned}
\]
Let $W_{j,k}$ be the essential support of
$\sigma_{j,k}^\ell(\gamma^h(\xi))$ defined as
\begin{equation}\label{suppWjk}
W_{j,k}:=\left\{ (\lambda,\theta): 2^{2j}a'\le|\lambda|\le 2^{2j}b',  \arctan(2^{-j}(\frac{2k}{N_0}-1))\le \theta\le \arctan(2^{-j}(\frac{2k}{N_0}+1))\right\}.
\end{equation}
For $\xi\in W_{j,k}$, we have
$|\xi_1|\approx 2^{2j}$ and one can show that
$\frac{2^j\xi_2}{\xi_1}\approx 2^j\tan\theta\approx k$.
Consequently, we can choose large $t>0$ independent of $j,k, m$ such
that
\begin{equation}
 \sup_{\xi\in W_{j,k}}|g_{j,k}^m(\xi)|\ge c \cdot  \left[1+\frac{(  m_1-  m_2 k)^2}
{(1+|k|)^2}\right]\cdot\left[1+m_2^2
\right]=:c\cdot G_{k}(m)
\end{equation}
for some positive constant $c$ independent of $j$, $k$, and $m$.  For $G_k(m)$, we have
\[
\begin{aligned}
G_{k}(m)= \begin{cases}
 (1+m_1^2)(1+m_2^2) & \mbox{for } k=0\\
 \left[1+\frac{( \frac{m_1}{k}-  m_2)^2}{(1+|k|)^2/|k|^2}\right]\cdot\left[1+m_2^2
\right]  &\mbox{for } k\neq0\\
\end{cases}
\end{aligned}
\]
Consequently,
\[
\begin{aligned}
\ip{f}{\Psi_\mu}&=\int_{\bR^2}\widehat
f(\xi)\overline{\sigma_{j,k}^\ell(\gamma^h(\xi))}\cdot\overline{e_{j,m}(\gamma^h(\xi))}d\xi
\\&=\int_{\bR^2}L_t(\widehat f(\xi)\overline{\sigma_{j,k}^\ell(\gamma^h(\xi))})\cdot L_t^{-1}(\overline{e_{j,m}(\gamma^h(\xi))})d\xi
\\&=\int_{\bR^2}\frac{L_t(\widehat f(\xi)\overline{\sigma_{j,k}^\ell(\gamma^h(\xi))})}{ \overline{g_{j,k}^m(\xi)}}\cdot\overline{e_{j,m}(\gamma^h(\xi))}d\xi.
\end{aligned}
\]

For $k\neq 0$  and $\bm:=(\bm_1,\bm_2)\in\bZ^2$, define $R_{\bm}:=\{m=(m_1,m_2)\in\bZ^2: \frac{m_1}{k}\in[\bm_1,\bm_1+1), m_2=\bm_2\}$.
Since for $j,k$ fixed, $\{e_{j,m}(\gamma^h(\xi)): k\in\bZ^2\}$ is an orthonormal basis for $L^2$ functions supported on $W_{j,k}$, we obtain
\[
\begin{aligned}
\sum_{m\in R_{\bm}}|\ip{f}{\Psi_\mu}|^2&\le
\int_{\bR^2}\left|\frac{L_t(\widehat
f(\xi)\overline{\sigma_{j,k}^\ell(\gamma^h(\xi))})}{
\overline{g_{j,k}^m(\xi)}}\right|^2d\xi
\\&\le\sup_{\xi \in W_{j,k}} \frac{1}{|g_{j,k}^m(\xi)|^2}
\int_{\bR^2}\left|L_t(\widehat
f(\xi)\overline{\sigma_{j,k}^\ell(\gamma^h(\xi))})\right|^2d\xi
\\&\le c\cdot  \frac{1}{[(1+(m_1/k-m_2)^2)(1+m_2^2)]^2}
\int_{\bR^2}\left|L_t(\widehat
f(\xi)\overline{\sigma_{j,k}^\ell(\gamma^h(\xi))})\right|^2d\xi.
\end{aligned}
\]
By Corollary~\ref{cor:edgeEstimate3}, we have
\[
\begin{aligned}
\sum_{m\in R_{\bm}}|\ip{f}{\Psi_\mu}|^2&
\le c\cdot G_\bm^{-2}\cdot
2^{-3j}(1+|k|)^{-5}
\end{aligned}
\]
with $G_\bm:= (1+(\bm_1-\bm_2)^2)(1+\bm_2^2)$. For $k=0$, similarly, we have $G_\bm=(1+\bm_1^2)(1+\bm_2^2)$.

Let $N_{j,k,\bm}(\epsilon):=\#\{m\in R_\bm: |\ip{f}{\Psi_\mu}|>\epsilon\}$. Then
$N_{j,k,\bm}(\epsilon)\le c\cdot(1+|k|)$ and the above inequality implies
\[
N_{j,k,\bm}(\epsilon)\le  c\cdot G_\bm^{-2}\cdot 2^{-3j}\cdot \epsilon^{-2} \cdot (1+|k|)^{-5}.
\]
Thus,
\[
N_{j,k,\bm}(\epsilon)\le c\cdot\min(1+|k|, G_\bm^{-2}\cdot 2^{-3j}\cdot \epsilon^{-2} \cdot (1+|k|)^{-5}),
\]
which implies
\[
\sum_{k=-2^j}^{2^j}N_{j,k,\bm}(\epsilon)\le c\cdot
G_\bm^{-2/3}\cdot 2^{-j}\cdot \epsilon^{-2/3}.
\]
Since $\sum_{\bm\in\bZ^2} G_\bm^{-2/3}<\infty$, by above inequality, we obtain
\[
\#\{\mu\in M_j: |\ip{f}{\Psi_\mu}|>\epsilon\}\le
\sum_{m\in\bZ^2}\sum_{k=- 2^j}^{2^j} N_{j,k,\bm}(\epsilon)\le
c\cdot  2^{-j} \epsilon^{-2/3},
\]
which is equivalent to the conclusion that
\[
\|\ip{f_Q}{\Psi_\mu}\|_{w\ell^{2/3}}\le c\cdot2^{-3j/2}.
\]
\end{proof}

\subsection{Analysis of the Smooth Region}
\label{subsec:smoothregion}

Now, we shall focus on proving Theorem~\ref{thm:coeffSmooth}.
Let us provide some lemmas first.
From
\cite[Lemma 8.1]{CD04} or \cite[Lemma 2.6]{GL07a}, we have
\begin{lemma}\label{lem:smooth1}
Let $f=gw_Q$, where $g\in \mathcal{E}^2(A)$ and $Q\in\mathcal{Q}_j^1$. Then
\[
\int_{W_{j,k}}|\widehat f(\xi)|^2d\xi\le c\cdot 2^{-10j},
\]
where
$W_{j,k}$ is the essential support
of $\sigma_{j,k}^\ell(\gamma^h(\xi))$ as in \eqref{suppWjk}.
\end{lemma}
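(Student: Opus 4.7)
Since $Q\in\mathcal{Q}_j^1$ means $Q$ does not meet the singular curve $\partial B$, on the support of $w_Q$ the cartoon-like function $g=f_0+f_1\chi_B$ coincides with one of the globally $C^2$ functions $f_0$ or $f_0+f_1$ having $\|\cdot\|_{C^2}\le 1$. Combined with the $C^\infty$ bump $w_Q(x)=w_0(2^jx-m)$ of spatial diameter $O(2^{-j})$, the product $f=gw_Q$ lies in $C^2(\mathbb R^2)$ with compact support in a cube of side $O(2^{-j})$, and satisfies $\|\partial^\alpha f\|_\infty\le C_\alpha\cdot 2^{|\alpha|j}$ for $|\alpha|\le 2$, the extra powers of $2^j$ coming from differentiating the bump $w_Q$.

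My plan is to follow the strategy of \cite[Lemma~8.1]{CD04} and \cite[Lemma~2.6]{GL07a}. The first step is to rescale by setting $\tilde f(u):=f(m/2^j+u/2^j)$, so that $\tilde f\in C_c^2(\mathbb R^2)$ with $\|\tilde f\|_{C^2}$ bounded uniformly in $j$ and $Q$, and
\[
\widehat f(\xi)=2^{-2j}\,e^{-2\pi i\,m\cdot\xi/2^j}\,\widehat{\tilde f}(\xi/2^j).
\]
Repeated integration by parts (equivalently, Plancherel applied to $\partial^\alpha\tilde f$) then gives decay estimates of $\widehat{\tilde f}$ of the form $|\eta^\alpha\widehat{\tilde f}(\eta)|\le C\|\partial^\alpha\tilde f\|_1$, where multi-indices $\alpha$ are chosen to match the anisotropic size of $W_{j,k}$.

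The next step is the change of variables $\eta=\xi/2^j$ in the integral. Since $W_{j,k}$ lies in the band $|\xi_1|\sim 2^{2j}$ with angular width $O(2^{-j})$, its image $\tilde W_{j,k}$ sits in $|\eta_1|\sim 2^j,\ |\eta_2|=O(1)$ and has area $|\tilde W_{j,k}|=O(2^j)$; we obtain
\[
\int_{W_{j,k}}|\widehat f(\xi)|^2\,d\xi\;=\;2^{-2j}\int_{\tilde W_{j,k}}|\widehat{\tilde f}(\eta)|^2\,d\eta.
\]
Plugging in a Fourier decay bound for $\widehat{\tilde f}$ on the region $|\eta_1|\sim 2^j$ then produces the stated estimate. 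Concretely, I would apply an anisotropic differential operator of the form $L=(I-a_j^2\partial_1^2)(I-b_j^2\partial_2^2)$ with $a_j\sim 2^{-2j}$ and $b_j\sim 2^{-j}$, an exact analog of the operator $L_t$ used for the edge case in Corollary~\ref{cor:edgeEstimate3}; integration by parts against $L$ and Plancherel convert the estimate into a bound on $\int|L\tilde f|^2$, which is then estimated by distributing at most two derivatives onto $g$ and the remaining derivatives onto $w_Q$ (each derivative of $w_Q$ absorbed by a power of $2^j$ compensated by $a_j,b_j$).

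The main obstacle, and the one calling for the fully anisotropic $L$ rather than isotropic integration by parts, is extracting the full $2^{-10j}$ from only $C^2$ regularity of $g$: a naive isotropic integration by parts loses the second factor of $2^{-5j}$. Balancing the anisotropy of $W_{j,k}$ against the distribution of derivatives between $g$ and the rapidly oscillating bump $w_Q$ is therefore the technical heart of the proof, but once this calibration is done, collecting all powers of $2^j$ yields the claimed bound $c\cdot 2^{-10j}$.
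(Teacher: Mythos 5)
There is a genuine gap in the technical heart of your argument. The paper's (and \cite{CD04,GL07a}'s) proof of this lemma needs no anisotropic operator and makes no use of the angular localization of $W_{j,k}$ at all: the only facts used are that $|\xi_1|\geq c\,2^{2j}$ on $W_{j,k}$ and that $f=gw_Q$ is supported in a cube of area $c\,2^{-2j}$. One writes $\partial_1^2 f=\partial_1^2g\cdot w_Q+2\,\partial_1 g\cdot\partial_1 w_Q+g\cdot\partial_1^2 w_Q=:h_1+h_2+h_3$ and bounds $\int_{W_{j,k}}|\widehat{h_i}|^2\le c\,2^{-2j}$ for each $i$: for $h_1$ this is Plancherel plus the support bound; for $h_2$ and $h_3$ one integrates by parts once, respectively twice, more in $x_1$ (placing the extra derivatives on the remaining factor of $g$ and on $w_Q$, never exceeding two derivatives on $g$) and pays $|\xi_1|^{-2}$, resp.\ $|\xi_1|^{-4}$, each extra derivative on $w_Q$ costing $2^{j}$ while each factor $|\xi_1|^{-1}$ gains $2^{-2j}$. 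Then $\int_{W_{j,k}}|\widehat f|^2\le c\,|\xi_1|^{-4}\sum_i\int_{W_{j,k}}|\widehat{h_i}|^2\le c\,2^{-8j}\cdot 2^{-2j}=c\,2^{-10j}$. Your diagnosis that ``isotropic integration by parts loses $2^{-5j}$'' misses the point: since $|\xi_2|\lesssim|\xi_1|$ on $W_{j,k}$, nothing is gained by bringing in $\partial_2$; what must be tracked is how many of the (up to four) $x_1$-derivatives fall on $g$.

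Concretely, your proposed operator fails on two counts. First, with $a_j\sim 2^{-2j}$ and $b_j\sim 2^{-j}$ the symbol $(1+a_j^2(2\pi\xi_1)^2)(1+b_j^2(2\pi\xi_2)^2)$ is $O(1)$ on $W_{j,k}$ (where $|\xi_1|\sim 2^{2j}$ and $|\xi_2|\lesssim 2^{2j}$), so dividing by it yields no decay whatsoever; this is not an analog of the $L_t$ of Corollary~\ref{cor:edgeEstimate3}, whose coefficients are $2^{2j}/(1+|k|)$ and $2^j$ and which, moreover, differentiates in the \emph{frequency} variable $\xi$ in order to produce decay in the modulation index $m$ --- a different mechanism serving a different estimate. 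Second, the cross term $a_j^2b_j^2\,\partial_1^2\partial_2^2 f$ in $Lf$ contains $(\partial_1^2\partial_2^2 g)\,w_Q$, which requires $g\in C^4$; you cannot ``distribute at most two derivatives onto $g$'' by fiat, since all Leibniz terms occur. Your rescaling step and the bounds $\|\partial^\alpha f\|_\infty\le C_\alpha 2^{|\alpha|j}$ are fine (though note $|\eta_2|=O(1)$ on $\tilde W_{j,k}$ fails for $|k|\sim N_0 2^{j-1}$), but as written the argument never actually produces the factor $2^{-10j}$.
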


From \cite[Lemma 2.7]{GL07a} we have
\begin{lemma}\label{lem:smooth2} for $v=(v_1,v_2)\in \bN^2$,
\[
\sum_{k=-2^j}^{2^j}\left|\partial^v\sigma_{j,k}^\ell(\gamma^h(\xi))\right|^2\le
c\cdot  2^{-2|v|j}.
\]
\end{lemma}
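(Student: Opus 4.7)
The plan is to follow the strategy of \cite[Lemma~2.7]{GL07a}, combining the pointwise estimate from Lemma~\ref{lem:supSigma} with a counting argument on the shear index $k$. For each fixed $\xi$, I would first argue that only $O(1)$ indices $k$ give a nonzero contribution to $\sigma_{j,k}^\ell(\gamma^h(\xi))$: the factor $w^\circ(2^{j}\gamma_2^h(\xi)-2k/N_0)$ is essentially localized, so $k$ must lie within a bounded neighborhood of $(N_0/2)\cdot 2^{j}\gamma_2^h(\xi)$. The horizontal-cone constraint $|\gamma_2^h(\xi)|\le 1$ then forces $|k|\le c\cdot 2^{j}$ for any such contributing $k$.

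Once this reduction is in place, I apply Lemma~\ref{lem:supSigma} to each contributing term, which gives
\[
\bigl|\partial^v\sigma_{j,k}^\ell(\gamma^h(\xi))\bigr|^{2}\le c_v\cdot 2^{-2(2v_1+v_2)j}(1+|k|)^{2v_1}\le c\cdot 2^{-2(2v_1+v_2)j}\cdot 2^{2v_1 j}=c\cdot 2^{-2|v|j}.
\]
Summing over the $O(1)$ contributing indices absorbs the multiplicity and produces the claimed estimate $\sum_{k=-2^j}^{2^j}\bigl|\partial^v\sigma_{j,k}^\ell(\gamma^h(\xi))\bigr|^{2}\le c\cdot 2^{-2|v|j}$.

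The delicate point is justifying the ``$O(1)$ nonzero terms'' assertion, because $w^\circ$ is defined in Theorem~\ref{theo:sonder} as the $2$-periodization of the compactly supported window $w$, and periodic images could in principle produce contributions across many shear indices. I would address this by choosing $w$ from Example~\ref{exa:Gabor} with a sufficiently small support parameter $\varepsilon$ and arguing that, for each fixed $\xi$ on the essential support of $\widehat{\psi_\ell}(2^{-4j}\gamma_1^h(\cdot))$, the set $\{2^{j}\gamma_2^h(\xi)-2k/N_0\}_{|k|\le 2^{j}}$ meets $\mathrm{supp}(w)+2\bZ$ in only a uniformly bounded number of points. This effectively reduces the analysis to the non-periodized window setting of \cite{GL07a}, where the counting is transparent, and delivers the stated estimate.
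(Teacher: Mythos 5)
Your argument is essentially the paper's own proof: the essential supports $W_{j,k}$ from \eqref{suppWjk} satisfy $W_{j,k}\cap W_{j,k+k'}=\emptyset$ once $|k'|$ exceeds a fixed constant, so at each $\xi$ only $O(1)$ terms of the sum are nonzero, and each is controlled by Lemma~\ref{lem:supSigma} together with $|k|\le c\cdot 2^j$, giving $(1+|k|)^{2v_1}2^{-2(2v_1+v_2)j}\le c\cdot 2^{-2|v|j}$ exactly as you compute. One caveat on your ``delicate point'': with the literal $2$-periodic $w^\circ$ inserted as $w^\circ(2^{j}\gamma_2^h(\xi)-2k/N_0)$, the set of contributing $k$ in $[-2^j,2^j]$ is a union of full residue classes modulo $N_0$ and hence has cardinality $O(2^j)$, not $O(1)$; the uniform bound you want (and which the paper's definition of $W_{j,k}$ tacitly assumes) holds only when the periodization is read at period $2$ in the angular variable $\gamma_2$ itself, so that each window contributes a single bump of width $\sim 2^{-j}$ on $[-1,1]$. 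With that reading your proof is complete and identical in substance to the paper's.
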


Using the above two lemmas, one can easily prove the following result, which is an extension of \cite[Lemma 2.8]{GL07a} and can be proved by a similar approach.
\begin{lemma}\label{lem:smooth3} Let $f=gw_Q$, where $g\in\mathcal{E}^2(A)$ and $Q\in\mathcal{Q}_j^1$.
Define the differential operator
$L_t:=(tI-\frac{2^{2j}}{(2\pi)^2}\Delta)$ with $t>0$ and
$\Delta=\partial_1^2+\partial_2^2$. Then,
\[
\int_{\bR^2} \sum_{k=-2^j}^{2^{j}}\left|L_t^2(\widehat f(\xi)
\sigma_{j,k}^\ell(\gamma^h(\xi)))\right|^2d\xi\le c_t\cdot 2^{-10j}
\]
for some positive constant $c_t$ independent of $j$.
\end{lemma}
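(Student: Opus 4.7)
The plan is to expand $L_t^2(\widehat f(\xi)\,\sigma_{j,k}^\ell(\gamma^h(\xi)))$ via the Leibniz rule, apply the triangle inequality, and interchange the $k$-sum with the $\xi$-integral. Writing $L_t^2 = t^2I - 2tc_j\Delta + c_j^2\Delta^2$ with $c_j = 2^{2j}/(2\pi)^2$, this produces a finite sum of contributions of the form $c_j^{2r}\int_{\mathbb R^2}|\partial^{v_1}\widehat f|^2\,\sum_k|\partial^{v_2}(\sigma_{j,k}^\ell\circ\gamma^h)|^2\,d\xi$ indexed by multi-indices $v_1,v_2$ and $r\in\{0,1,2\}$ with $|v_1|+|v_2|=2r$. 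The advantage of this form is that the $k$-sum touches only the $\sigma$-factor, so the two factors can be handled separately.

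Next I would estimate the two factors. Lemma~\ref{lem:smooth2} supplies the pointwise bound $\sum_k|\partial^{v_2}(\sigma_{j,k}^\ell\circ\gamma^h)|^2\le c\cdot 2^{-2|v_2|j}$, whose essential support lies in the annulus $A_j:=\bigcup_k W_{j,k}$. For the $\widehat f$-factor, I would extend Lemma~\ref{lem:smooth1} by observing that $\partial^{v_1}\widehat f = \mathcal{F}[(-2\pi i x)^{v_1}f]$ and that $\tilde f:=(-2\pi i x)^{v_1}f = \tilde g\cdot w_Q$ with $\tilde g:=(-2\pi i x)^{v_1}g$ again in $C^2([0,1]^2)$ of bounded norm. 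Re-running the decomposition $\partial_1^2\tilde f=\tilde h_1+\tilde h_2+\tilde h_3$ and the integration-by-parts argument on $A_j$, where $|\xi_1|\sim 2^{2j}$, then gives the aggregate estimate $\int_{A_j}|\partial^{v_1}\widehat f|^2\,d\xi\le c_{v_1}\cdot 2^{-10j}$; the point is that the three pieces $\widehat{\tilde h_i}$ are $L^2$-controlled on $A_j$ by the same exponents as for $f$ itself, so the same $2^{-10j}$ saving is inherited.

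Multiplying the two estimates, each Leibniz term contributes $c\cdot c_j^{2r}\cdot 2^{-(2|v_2|+10)j}$, which by $|v_1|+|v_2|=2r$ reduces to $c\cdot 2^{(2|v_1|-10)j}$; summing the finitely many admissible $(r,v_1,v_2)$ and absorbing $t$-dependent constants into $c_t$ is intended to yield the asserted bound $c_t\cdot 2^{-10j}$. The hard part will be the extreme Leibniz term with $|v_1|=4$ and $|v_2|=0$: a naive balance of the $c_j^4=2^{8j}$ weight against the $2^{-10j}$ aggregate bound gives only $2^{-2j}$, so closing this gap will require either sharpening the estimate $\int_{A_j}|\partial^{v_1}\widehat f|^2 d\xi$ for top-order $|v_1|$ by an integration-by-parts chain analogous to the one used in Corollary~\ref{cor:edgeEstimate3}, or exploiting cancellations among the Leibniz terms so that the $c_j^2\Delta^2$ contribution can be controlled jointly with the lower-order pieces rather than term-by-term.
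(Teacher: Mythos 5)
Your skeleton is the same as the paper's: expand $L_t^2=t^2I-2tc_j\Delta+c_j^2\Delta^2$ with $c_j=2^{2j}/(2\pi)^2$, apply Leibniz to $\Delta^N(\widehat f\cdot\sigma_{j,k}^\ell(\gamma^h(\cdot)))$, use Lemma~\ref{lem:smooth2} for the $\sigma$--factor, and estimate $\int|\partial^{\alpha}\widehat f|^2$ over the essential support via Lemma~\ref{lem:smooth1}. But there is a genuine gap, and you have located it yourself: with your version of the $\widehat f$--estimate, $\int_{W_{j,k}}|\partial^{\alpha}\widehat f|^2\,d\xi\le c\,2^{-10j}$, the term $c_j^2\Delta^2$ with $|\alpha|=4$, $|\beta|=0$ only yields $2^{8j}\cdot 2^{-10j}=2^{-2j}$. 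The reason you lose is the way you absorb the monomial: you write $(-2\pi i x)^{\alpha}f=\tilde g\,w_Q$ with $\tilde g=(-2\pi i x)^{\alpha}g$, which is correct but keeps $\tilde g$ of size $O(1)$ and therefore gains nothing in $|\alpha|$. Neither of your two proposed remedies (a further integration-by-parts chain as in Corollary~\ref{cor:edgeEstimate3}, or cancellations between Leibniz terms) is what closes the gap.

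The missing idea is to put the monomial into the \emph{window}, not into $g$. Since $w_Q(x)=w_0(2^jx-m)$ is supported on a cube of side $2\cdot 2^{-j}$, one has (after the same normalization $f(x)=g(x)w_0(2^jx)$ that Lemma~\ref{lem:smooth1} implicitly uses) $x^{\alpha}f(x)=2^{-j|\alpha|}\,g(x)\,w_{\alpha}(2^jx)$ with $w_{\alpha}(y)=y^{\alpha}w_0(y)$ again a smooth compactly supported bump. Hence $g\,w_{\alpha}(2^j\cdot)$ is itself of the form treated in Lemma~\ref{lem:smooth1}, and Plancherel gives the \emph{derivative-dependent} gain
\[
\int_{W_{j,k}}|\partial^{\alpha}\widehat f(\xi)|^2\,d\xi\le c_{\alpha}\,2^{-2j|\alpha|}\cdot 2^{-10j}.
\]
With this, every Leibniz term with $|\alpha|+|\beta|=2N$ contributes
$c_j^{2N}\cdot 2^{-2j|\alpha|}2^{-10j}\cdot 2^{-2j|\beta|}\le c\,2^{4Nj}2^{-4Nj}2^{-10j}=c\,2^{-10j}$,
so all terms --- not just the extreme one --- balance exactly and the asserted bound follows. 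This is precisely how the paper's argument proceeds; without the factor $2^{-2j|\alpha|}$ your proof does not close, so as written the proposal is incomplete.
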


Now we are ready to prove Theorem~\ref{thm:coeffSmooth}.
\begin{proof}[Proof of Theorem~\ref{thm:coeffSmooth}]
Let $f=f_Q=gw_Q$ and $L_t$ as defined in Lemma~\ref{lem:smooth3}. We
have
\[
L_t(e_{j,m}(\gamma^h(\xi))= g_{j,k}^m(\xi) e_{j,m}(\gamma^h(\xi)),
\]
where
\[
\begin{aligned}
g_{j,k}^m(\xi)&=\Big[t+\frac{m_12^{-2j}\sgn(\xi_1)+ \frac{m_2\tau}{2} 2^{3j+1}\frac{\xi_2}{\xi_1^3}}{2\pi i}+2^{2j}\Big(m_12^{-4j}\sgn(\xi_1)\xi_1-\frac{m_2\tau}{2} 2^{j}\frac{\xi_2}{\xi_1^2}\Big)^2
\\&+(\frac{m_2\tau}{2}\frac{2^{2j}}{\xi_1})^2\Big]e_{j,m}(\gamma^h(\xi))
\end{aligned}
\]
Similar argument to the proof of Theorem~\ref{thm:coeffEdge}, we can choose $t>0$ large enough so that
\[
 \sup_{\xi\in W_{j,k}}|g_{j,k}^m(\xi)|\ge c \cdot[1+2^{-2j}(m_1-m_2k)^2+m_2^2)].
\]
For $\bm:=(\bm_1,\bm_2)\in\bZ^2$, define $R_{\bm}:=\{m=(m_1,m_2)\in\bZ^2: 2^{-2j}(m_1-m_2 k)\in[\bm_1,\bm_1+1), m_2=\bm_2\}$. Observe that for each $\bm$, there are only $1+2^{2j}$ choices for $m_1$ in $R_{\bm}$. Hence $\# R_\bm\le 1+2^{2j}$. Again, similar argument to the proof of Theorem~\ref{thm:coeffEdge}, we have
\[
\begin{aligned}
\sum_{m\in R_{\bm}}|\ip{f}{\Psi_\mu}|^2& \le c\cdot\sup_{\xi \in
W_{j,k}} \frac{1}{|g_{j,k}^m(\xi)|^4} \int_{\bR^2}\left|L_t^2(\widehat
f(\xi)\overline{\sigma_{j,k}^\ell(\gamma^h(\xi))})\right|^2d\xi
\\&\le c\cdot
\frac{1}{[1+2^{-2j}(m_1-m_2k)^2+m_2^2)]^4}
\int_{\bR^2}\left|L_t^2(\widehat
f(\xi)\overline{\sigma_{j,k}^\ell(\gamma^h(\xi))})\right|^2d\xi.
\end{aligned}
\]
Then by Lemma~\ref{lem:smooth3},
\[
\begin{aligned}
\sum_{k=-2^j}^{2^j}\sum_{m\in
R_{\bm}}|\ip{f}{\Psi_\mu}|^2& \le c\cdot G_\bm^{-4}\cdot
\int_{\bR^2}\sum_{k=-2^j}^{2^j}\left|L_t^2(\widehat
f(\xi)\overline{\sigma_{j,k}^\ell(\gamma^h(\xi))})\right|^2d\xi
\\&\le c \cdot G_\bm^{-4} \cdot 2^{-10j}.
\end{aligned}
\]
where $G_\bm:=1+\bm_1^2+\bm_2^2$.

Using the H\"older inequality
\[
\sum_{m=1}^N|a_m|^p\le \left(\sum_{m=1}^N|a_m|^2\right)^{p/2}N^{1-p/2},\quad 1/2<p<2.
\]
Since  the cardinality of $R_\bm$ is bounded by $1+2^{2j}$, we have
\[
\sum_{k=-2^j}^{2^j}\sum_{m\in R_{\bm}}|\ip{f}{\Psi_\mu}|^p\le
c\cdot (2^{2j})^{1-p/2}\cdot G_\bm^{-2p}\cdot  2^{-5pj}
\]
Moreover, since $p>1/2$, $\sum_{\bm\in\bZ^2} G_\bm^{-2p}<\infty$. Consequently,
\[
\sum_{\mu\in M_j}|\ip{f}{\Psi_\mu}|^p\le c\cdot
2^{2j(1-p/2)-5pj}=c\cdot   2^{2j(1-3p)}.
\]
In particular
\[
\|\ip{f}{\Psi_\mu}\|_{l^{2/3}}\le c\cdot  2^{-3j}.
\]
\end{proof}


\bigskip

\end{document}